\newtheorem{thm}{Theorem}[section]
\newtheorem{lem}[thm]{Lemma}
\newtheorem{prop}[thm]{Proposition}
\newtheorem{cor}[thm]{Corollary}
\numberwithin{figure}{section}
\newcommand{\refT}[1]{Theorem~\ref{#1}}
\newcommand{\refC}[1]{Corollary~\ref{#1}}
\newcommand{\refL}[1]{Lemma~\ref{#1}}
\newcommand{\refS}[1]{Section~\ref{#1}}
\newcommand{\refP}[1]{Proposition~\ref{#1}}
\newcommand{\refF}[1]{Figure~\ref{#1}}
\newcommand{\refand}[2]{\ref{#1} and~\ref{#2}}
\newcommand\cC{\mathcal C}
\newcommand\cD{\mathcal D}
\newcommand\cF{\mathcal F}
\newcommand\cG{\mathcal G}
\newcommand\cK{\mathcal K}
\newcommand\cN{\mathcal N}
\newcommand\cR{{\mathcal R}}
\newcommand\cU{{\mathcal U}}
\newcommand\cX{{\mathcal X}}
\newcommand{\E}[1]{{\mathbf E}\left[#1\right]}
\newcommand{\p}[1]{{\mathbf P}\left(#1\right)}
\newcommand{\I}[1]{{\mathbf 1}_{[#1]}}
\newcommand{\N}{\mathbb{N}}
\newcommand{\Z}{\mathbb{Z}}
\newcommand{\eps}{\varepsilon}
\newcommand{\sibl}{\overset{\mathrm{s}}{\sim} }
\title{A branching process with deletions and mergers that matches the threshold for hypercube percolation\footnote{Alternative title - Infanticide and Incest: a branching process with deletions and mergers}}
\author{Laura Eslava\thanks{Universidad Nacional Autonoma Mexico, Instituto de investigaciones en matematicas aplicadas y en sistemas, Mexico} 
\and Sarah Penington\thanks{Department of Mathematical Sciences, University of Bath, UK}
\and Fiona Skerman\thanks{
Department of Mathematics, Uppsala University, Sweden}
}
\date{\today}
\begin{document}
\maketitle

\begin{abstract}
    We define a graph process $\cG(p,q)$ based on a discrete branching process with deletions and mergers, which is inspired by the 4-cycle structure of both the hypercube $Q_d$ and the lattice $\Z^d$ for large $d$. Individuals have Poisson offspring distribution with mean $1+p$ and certain deletions and mergers occur with probability $q$; these parameters correspond to the mean number of edges discovered from a given vertex in an exploration of a percolation cluster and to the probability that a non-backtracking path of length four closes a cycle, respectively. 

    We prove survival and extinction under certain conditions on $p$ and $q$ that heuristically match the known expansions of the critical probabilities for bond percolation on the lattice $\Z^d$ and the hypercube $Q_d$. These expansions have been rigorously established by Hara and Slade in 1995, and van der Hofstad and Slade in 2006, respectively. We stress that our method does not constitute a branching process proof for the percolation threshold.
    
    The analysis of the graph process survival is considerably more challenging than for branching processes in discrete time, due to the interdependence between the descendants of different individuals in the same generation.
    In fact, it is left open whether the survival probability of $\cG(p,q)$ is monotone in $p$ or $q$; we discuss this and some other open problems regarding the new graph process. 
\end{abstract}

\section{Introduction} \label{sec:intro}
We propose an extension to inhomogeneous branching processes in discrete generations by defining a process $\cG(p,q)$ in which individuals may have more than one parent and the offspring distribution of each vertex depends on its genealogy which, in turn, is no longer represented by a tree but by a graph. Our main theorem gives sufficient conditions for extinction and survival of this graph process. 

The motivation for the construction of $\cG(p,q)$ lies in the local, structural properties of percolation clusters on the hypercube $Q_d$ and the lattice $\Z^d$. Briefly explained, under suitable choices of $p$ and $q$, the graph process we propose approximates the exploration of a cluster in bond percolation, by taking into account only 4-cycles. The survival and extinction conditions we obtain shed some light on the influence the local structure of these graphs has on the coefficients in the asymptotic expansion of the critical probability for percolation. We postpone the discussion of the heuristic interpretation of our model to \refS{sec:heuristic}.

\subsection{Definition of the graph process $\cG(p,q)$} \label{subsec:Gpqfirstdef}

For $p> -1$ and $q\in [0,1]$, we define a graph process consisting of a branching process in discrete generations with deletions and mergers, and denoted by $\cG(p,q):=(G_n)_{n\ge 0}$. Each graph $G_n$ represents the first $n$ generations of the process, and each vertex corresponds to an individual.

The construction is as follows. For each $k\in \N_0$, let $m_k:=(1+p)(1-q)^k$. The process $\cG(p,q)=(G_n)_{n\ge 0}$ is defined recursively, with $G_n:=(V_n,E_n)$ for each $n$.
We let $I_0:=V_0$ and for each $n\ge 1$, we write
$I_n := V_n \setminus V_{n-1}$, so that for each $n$, $I_n$ is the set of vertices corresponding to individuals in the $n$th generation.

The process starts with $G_0$ consisting of a single vertex with no edges. For $n\in \N$, once the  graph $G_{n-1}$ has been constructed, we construct the $n$th generation, $I_n$, and the graph $G_n$ as follows: 
\begin{enumerate}
    \item For each individual $u\in I_{n-1}$, let $k_u$ be the number of vertices in $G_{n-1}$ at graph distance exactly 3 from $u$. Each $u\in I_{n-1}$ has an independent number of `potential' offspring with Poisson distribution and with mean $1+p$; then, for each of these offspring independently, the offspring is deleted with probability $1-(1-q)^{k_u}$. 

    Form a new graph $\tilde G_n$ by adding new vertices to $G_{n-1}$ for each of the offspring that survived the deletion, with edges joining them to their parents. Note that, in $\tilde G_n$, each vertex $u\in I_{n-1}$ has an independent number of offspring with Poisson distribution and with mean $m_{k_u}$.
    \item Now, suppose there are $Y_n$ new vertices: call these vertices $\tilde I_n$, and give them an arbitrary ordering. We define equivalence classes of vertices in $\tilde{I}_n$ as follows. 
    Let $(B_{i,j})_{1\le i<j\le Y_n}$ be i.i.d.~Bernoulli random variables with mean $q$.
    For each $1\le i< j \le Y_n$, if the graph distance (in $\tilde G_n$) between the $i$th and $j$th individuals in $\tilde I_n$ is exactly 4, and if $B_{i,j}=1$, then identify the two individuals.
    
    Form the graph $G_n$ and the set of vertices $I_n$ from $\tilde G_n$ and $\tilde I_n$ by merging each class of identified vertices into a single vertex, with an edge set given by the union of their edge sets, replacing multi-edges with single edges; see \refF{fig:merger} for an example of a merging event.
\end{enumerate}

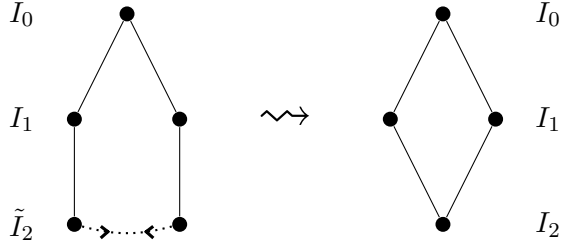
\begin{figure}[ht!]
	\begin{center}
	\begin{tikzpicture}[scale=0.7]
 \begin{scope}[shift={(-2,-2)}]
    		\tikzstyle{vertex}=[circle,fill=black, draw=none, minimum size=4pt,inner sep=2pt]
    		\node[vertex] (a1) at (-1,6){};
    		\node[vertex] (l2) at (-2,4){};
    		\node[vertex] (l3) at (-2,2){};
    		\node[vertex] (a2) at (0,4){};
    		\node[vertex] (a3) at (0,2){};
    		\draw (a1)--(a2)--(a3) (a1)--(l2)--(l3);
    		\draw[dotted, thick] (l3) to [out=-15,in=195] (a3); 
            \node[] at (-3,6){$I_0$};
            \node[] at (-3,4){$I_1$};
            \node[] at (-3,2){$\tilde{I}_2$};
    	\tikzstyle{vertex}=[circle,fill=none, draw=none, inner sep=0pt, minimum size=2]
		\begin{scope}[shift={(-0.7,1.9)}]
    		\node[vertex] (mid) at (0,0.017){}; 
    		\node[vertex] (mid2) at (0,-0.09){};
    		\node[vertex] (mid-below) at (0.245,-0.2){}; 
    		\node[vertex] (mid-above) at (0.245,0.1){};
    		\draw[color=black, very thick] (mid-below) -- (mid) (mid2) --(mid-above);
		\end{scope}

		\begin{scope}[shift={(-1.3,1.9)}]
    		\node[vertex] (mid) at (0,0.017){}; 
    		\node[vertex] (mid2) at (0,-0.09){};
    		\node[vertex] (mid-below) at (-0.245,-0.2){}; 
    		\node[vertex] (mid-above) at (-0.245,0.1){}; 
    		\draw[color=black, very thick] (mid-below) -- (mid) (mid2) --(mid-above);
		\end{scope}
\end{scope}
\node[] at (0,2) {\huge $\leadsto$};

\begin{scope}[shift={(4,-2)}]
    		\tikzstyle{vertex}=[circle,fill=black, draw=none, minimum size=4pt,inner sep=2pt]
    		\node[vertex] (a1) at (-1,6){};
    		\node[vertex] (l2) at (-2,4){};
    		\node[vertex] (a2) at (0,4){};
    		\node[vertex] (m3) at (-1,2){};
    		\draw (a1)--(a2)--(m3) (a1)--(l2)--(m3);
            \node[] at (1,6){$I_0$};
            \node[] at (1,4){$I_1$};
            \node[] at (1,2){$I_2$};
 \end{scope}
    \end{tikzpicture}
    \end{center}
    \caption{Merging of cousins. On the left, $\tilde{G}_2$ contains a pair of cousins that will merge. On the right, the pair of cousins form a vertex with two parents in $G_2$.}\label{fig:merger}
  \end{figure}

Note that if two individuals $u$ and $v$ in $\tilde I_n$ for some $n$ are graph distance 4 apart in $\tilde G_n$, then their parents in $I_{n-1}$ must have a common parent; we say that $u$ and $v$ are `cousins'.

For $n\ge 0$, let
$Z_n:=|I_n|$ be the number of vertices corresponding to individuals in the $n$th generation of $\cG(p,q)$.

\subsection{Main result}\label{sec:mainthm}

Recall that branching processes in discrete generations (Galton-Watson processes) have the following survival threshold: the process survives with positive probability (resp.~dies out almost surely) if the mean number of offspring is greater than (resp.~less than or equal to) one (except for the special case in which each individual has exactly one offspring); see for example \cite{AN72}.

In the case of the graph process $\cG(p,q)$, we can define a notion of survival by saying that $\cG(p,q)$ dies out if there exists $n\ge 1$ for which $Z_n=0$; otherwise we say that the process survives. It is easy to show that the generation sizes $Z_n$ are stochastically dominated by the generation sizes of a Galton-Watson process with Poisson offspring distribution with mean $1+p$, and so if $p\le 0$ then $\cG(p,q)$ trivially dies out almost surely. Our main theorem concerns the survival threshold of $\cG(p,q)$.

\begin{thm}\label{thm:phases}
There exist $C>0$ and $p_0\in (0,1)$ with $Cp_0<1$ such that the graph process $\cG(p,q)$ with $0<p\le p_0$ exhibits (at least) two phases:
\begin{itemize}
    \item If $q< \frac{2}{5}p(1-Cp)$, then the process survives with positive probability:
    $$\p{Z_n>0 \text{ for all } n\ge 1}>0.$$
    \item If $q> \frac{2}{5}p(1+Cp)$, then the process dies out almost surely:
    $$\p{Z_n>0 \text{ for all } n\ge 1}=0.$$    
\end{itemize}
\end{thm}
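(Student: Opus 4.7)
The plan is to identify, for a typical $u\in I_{n-1}$ with $n$ large, the effective mean number of its children in $I_n$, and then to run a first-moment argument in the extinction regime and a second-moment (Paley--Zygmund) argument in the survival regime. Once the process has run a few generations, a typical $u$ has a great-grandparent at distance 3, and in expectation $\approx 1$ cousin also at distance 3 (the grandparent's size-biased offspring count is $\approx 2$, yielding one aunt/uncle with mean $\approx 1$ child); hence $k_u\approx 2$. The post-deletion mean offspring is therefore $m:=(1+p)(1-q)^2$, and the expected number of cousin-merger pairs among $u$'s children is $\approx q m^2$, shared with $u$'s typical unique sibling. The expected net count of children of $u$ that lie in $I_n$ is thus
\[
m - \tfrac{1}{2} q m^2 \;=\; 1 + p - \tfrac{5q}{2} + O\bigl(p^2 + pq + q^2\bigr),
\]
which crosses $1$ precisely at $q=2p/5$, matching the theorem's constant.

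For the extinction direction, I would establish a conditional bound of the form $\E{Z_n\mid G_{n-1}}\le (1-\eps)Z_{n-1}$ for some $\eps>0$ whenever $q>\tfrac{2}{5}p(1+Cp)$. The ingredients are (i) an upper bound on $\E{(1-q)^{k_u}}$ exploiting both the always-present great-grandparent and the expected cousins through the grandparent, which gives $\E{k_u}\ge 2-O(p)$, and (ii) a bookkeeping of sibling-merger pairs controlled via the size-biased sibling count of a uniformly chosen $u\in I_{n-1}$. Iterating yields $\E{Z_n}\to 0$ geometrically; Markov's inequality combined with Borel--Cantelli then forces $Z_n=0$ eventually almost surely.

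For the survival direction a direct first-moment argument does not suffice. The plan is to restrict to a pruned sub-process $\hat Z_n$ counting only those $u\in I_n$ whose last few ancestors have `typical' local structure (in particular $k_v\le 2$ for each recent ancestor $v$, with no anomalous mergers nearby). By the heuristic above, $\E{\hat Z_n}\ge (1+\delta)^n$ for some $\delta=\delta(p,q)>0$ when $q<\tfrac{2}{5}p(1-Cp)$. A second-moment estimate $\E{\hat Z_n^2}\le K\,\E{\hat Z_n}^2$, obtained by decomposing pairs $(u,u')\in I_n\times I_n$ according to the generation of their most recent common ancestor and controlling the correlations from the distance-3 deletion and merger rules along the shared ancestry, together with Paley--Zygmund, yields $\p{\hat Z_n\ge 1}\ge 1/(2K)$ for all $n$. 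Since $\{Z_n>0\}\supseteq\{\hat Z_n\ge 1\}$ and the events $\{Z_n>0\}$ are decreasing in $n$, one concludes $\p{Z_n>0\text{ for all }n}=\lim_n\p{Z_n>0}\ge 1/(2K)>0$.

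The main obstacle is the second-moment bound for $\hat Z_n$. Unlike in a Galton--Watson tree, two vertices in $I_n$ are correlated not only through their most recent common ancestor but also through the distance-3 deletion rule, which couples offspring counts of vertices far apart in the genealogy, and through potential merger partners at every intermediate generation. The pruning defining $\hat Z_n$ is engineered to suppress enough of this dependence to make the pairwise calculation tractable while only shrinking $\E{Z_n}$ by a constant factor; quantifying this trade-off precisely is where the small-$p$ hypothesis, and in particular the condition $Cp_0<1$, enters most strongly.
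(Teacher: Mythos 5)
Your identification of the effective growth rate $1+p-\tfrac52 q$ and of where the constant $\tfrac25$ comes from is exactly right, and the first-moment strategy for extinction is the one the paper uses. However, the specific inequality you propose to establish, $\E{Z_n\mid G_{n-1}}\le (1-\eps)Z_{n-1}$, is false. Conditional on a realisation of $G_{n-1}$ in which every vertex of $I_{n-1}$ has no siblings and exactly one vertex at distance three (its great-grandparent), no mergers can occur at step $n$ and each $u\in I_{n-1}$ independently produces a Poisson$((1+p)(1-q))$ number of children, so $\E{Z_n\mid G_{n-1}}=(1+p)(1-q)Z_{n-1}>Z_{n-1}$ throughout the regime $q\le p$. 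The deficit of $\tfrac52 q$ per vertex only appears after averaging over the genealogical structure of $G_{n-1}$ itself (the ``$\approx 1$ aunt'' and ``$\approx 1$ sibling'' are expectations over earlier generations, not pointwise facts), which is why the paper's Proposition~\ref{prop:Zn} takes the form of an unconditional recursion $\E{Z_n}\le (1+p-\tfrac52 q)\E{Z_{n-1}}+C_2p^2\max_{2\le k\le 6}\E{Z_{n-k}}$ reaching back six generations, and why establishing it requires the separate control of great-grandparents, aunts and ``distant relatives'' in Sections~\ref{sec:upper}--\ref{sec:relatives}. Your extinction argument can be repaired along these lines, but not in the conditional form you state.

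For survival your route is genuinely different from the paper's, and the difference matters: the paper avoids second moments entirely by exhibiting \emph{renewed vertices} (individuals with a single parent, grandparent and great-grandparent and no aunts or siblings), showing that the descendant processes of distinct renewed vertices are i.i.d.\ copies of an auxiliary process $\cG'(p,q)$ (Lemmas~\ref{lem:Iu} and~\ref{lem:distGu}), and embedding a supercritical Galton--Watson process with offspring law $R'_N$ (Proposition~\ref{prop:superGW}). In your plan the entire difficulty is concentrated in the asserted bound $\E{\hat Z_n^2}\le K\E{\hat Z_n}^2$, for which no argument is given; the correlations you would need to control are precisely the ones the paper flags as the main obstruction (offspring counts coupled through distance-3 deletion across distant parts of the genealogy, and merger chains), and it is not clear that pruning on ``$k_v\le 2$ for recent ancestors'' localises them, since $k_v$ itself depends on vertices outside $v$'s ancestral line. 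Moreover, even the first-moment lower bound $\E{\hat Z_n}\ge(1+\delta)^n$ for the pruned process already requires showing that vertices with distant relatives are $O(q^2)$-rare, i.e.\ essentially all of Section~\ref{sec:relatives}. As it stands the survival half is a programme rather than a proof, with the decisive estimate missing.
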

We do not attempt to obtain an optimal value for $C$, but we conjecture that in fact there is a critical value $q_c=q_c(p)$
such that the process $\mathcal G(p,q)$ survives with positive probability for $q<q_c$ and dies out for $q>q_c$. 
We also leave open the question of monotonicity of the survival probability in $p$ and $q$ (this monotonicity would imply the existence of such a critical $q_c$).
It seems reasonable to expect that the survival probability increases with $p$ (since larger values of $p$ correspond to larger numbers of offspring) and decreases with $q$ (since a larger value of $q$ corresponds to more deletions and mergers).
However, under the most natural coupling of the processes $\mathcal G(p,q)$ and $\mathcal G(p',q)$ with $p>p'$, survival of $\mathcal G(p',q)$ does not imply survival of $\mathcal G(p,q)$, and similarly, under the most natural coupling of the processes $\mathcal G(p,q)$ and $\mathcal G(p,q')$ with $q'>q$, survival of $\mathcal G(p,q')$ does not imply survival of $\mathcal G(p,q)$ (see Figures~\ref{fig:pnonmonotone} and~\ref{fig:qnonmonotone} and the discussion in the Appendix).

Take $\Omega$ a large positive integer and $\rho \in (0,1)$, and let $p(\rho) :=(\Omega -1)\rho -1$ and $q_b := (\Omega -1)^{-2}$.
In \refS{sec:design} below, we will show heuristically that the process $\mathcal G(p(\rho),q_b)$ can be seen as a toy model of an exploration of a bond percolation cluster with edge probability $\rho$ in the hypercube $Q_{\Omega}$ (and the lattice $\Z^{\Omega/2}$, if $\Omega$ is even).
The following corollary, which is a direct consequence of \refT{thm:phases},
shows that the survival and extinction phases of $\mathcal G(p(\rho),q_b)$ match the expansion in $(\Omega -1)^{-1}$ of
the critical probability for percolation up to the first three coefficients
(see~\eqref{pc_exp} in \refS{sec:heuristic} below).
We note in passing that it will become clear from the proof of \refT{thm:phases} that the $5/2$ coefficient in the definition of $\hat \rho_c$ below has a combinatorial reasoning; namely, a 1/2 comes from cousin mergers, while the remaining 4/2 comes from the deletion of edges (see the brief discussion after Proposition~\ref{prop:Zn} below). 

\begin{cor}\label{cor:threshold}
Take $p_0,C>0$ as in \refT{thm:phases}. 
There exists $C'>0$ such that the following holds for $\Omega \in \N$ sufficiently large.
For any $\rho\in (0,(1+p_0\wedge (2C)^{-1})(\Omega-1)^{-1}]$, let $p(\rho):=(\Omega-1)\rho-1$ and let
\begin{align*}
    \hat{\rho}_c&:=(\Omega-1)^{-1}+\frac 52 (\Omega-1)^{-3}.
\end{align*}
Letting $q_b:=(\Omega-1)^{-2}$,
\begin{itemize}
    \item If $\rho > \hat{\rho}_c +C' \Omega^{-5}$, then $\cG(p(\rho),q_b)$ survives with positive probability.
    \item If $\rho < \hat{\rho}_c -C' \Omega^{-5}$, then $\cG(p(\rho),q_b)$ dies out almost surely.
\end{itemize}
\end{cor}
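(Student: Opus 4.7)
The plan is to apply \refT{thm:phases} directly with parameters $p=p(\rho)$ and $q=q_b$, and to verify that the hypotheses translate cleanly into the stated inequalities on $\rho$. The assumption $\rho\le (1+p_0\wedge (2C)^{-1})(\Omega-1)^{-1}$ guarantees that $p(\rho)\le p_0\wedge (2C)^{-1}$, so \refT{thm:phases} is applicable and moreover $Cp(\rho)\le 1/2$, which keeps the factor $1-Cp(\rho)$ bounded away from zero uniformly in the allowed range.

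The central observation is that $\hat{\rho}_c$ has been chosen precisely so that $p(\hat{\rho}_c)=(\Omega-1)\hat{\rho}_c-1=\tfrac52(\Omega-1)^{-2}$, which is exactly the value of $p$ at which the leading-order identity $q_b=\tfrac25 p$ holds (recall $q_b=(\Omega-1)^{-2}$). Writing $\rho=\hat{\rho}_c+\delta$ gives $p(\rho)=\tfrac52(\Omega-1)^{-2}+(\Omega-1)\delta$. Substituting this into the survival criterion $q_b<\tfrac25 p(\rho)(1-Cp(\rho))$ and simplifying using $q_b=\tfrac25\cdot\tfrac52(\Omega-1)^{-2}\cdot 1$, the criterion reduces to the inequality $(\Omega-1)\delta>C\,p(\rho)^2\bigl(1+o(1)\bigr)$. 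Uniformly in the allowed range one has $p(\rho)=O(\Omega^{-2})$, so the right-hand side is $O(\Omega^{-4})$, and hence the criterion holds whenever $\delta>C''\Omega^{-5}$ for a constant $C''$ depending only on $C$.

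The dying-out direction is handled by the symmetric calculation: setting $\rho=\hat{\rho}_c-\delta$, the condition $q_b>\tfrac25 p(\rho)(1+Cp(\rho))$ reduces similarly to $(\Omega-1)\delta>Cp(\rho)^2=O(\Omega^{-4})$, which is again ensured by $\delta>C''\Omega^{-5}$. Choosing $C'$ to absorb $C''$ together with the lower-order corrections (from the difference between $\Omega-1$ and $\Omega$, and from the variation of $p(\rho)$ across the ball of radius $C'\Omega^{-5}$ around $\hat{\rho}_c$) completes the argument.

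There is essentially no obstacle: the corollary is a bookkeeping consequence of \refT{thm:phases}, and the only nontrivial task is the calibration of $C'$ against the constants $p_0$ and $C$ furnished by the theorem. The entire conceptual content is encoded in the choice of the coefficient $5/2$ in the definition of $\hat{\rho}_c$, which is forced by the phase boundary $q=\tfrac25 p$ of \refT{thm:phases} combined with the prescribed scaling $q_b=(\Omega-1)^{-2}$.
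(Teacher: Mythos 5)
Your overall strategy is exactly the intended one (the paper offers no separate proof, describing the corollary as a direct consequence of \refT{thm:phases}), and your central computation is correct: $p(\hat{\rho}_c)=\frac52(\Omega-1)^{-2}$, so $q_b=\frac25 p(\hat{\rho}_c)$ exactly, and writing $x:=(\Omega-1)\delta$ the survival condition $q_b<\frac25 p(1-Cp)$ reduces \emph{exactly} (no $(1+o(1))$ is needed) to $x>Cp(\rho)^2$, and symmetrically for extinction. Two steps as written, however, do not hold. First, the claim that $p(\rho)=O(\Omega^{-2})$ uniformly over the allowed range is false: the hypothesis only gives $p(\rho)\le p_0\wedge(2C)^{-1}$, a constant, so for $\rho$ near the top of the allowed interval $Cp(\rho)^2$ is of constant order rather than $O(\Omega^{-4})$, and your justification of $x>Cp(\rho)^2$ breaks down there. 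The conclusion still holds, but needs a short case split: if $p(\rho)\le 5(\Omega-1)^{-2}$ then $Cp(\rho)^2\le 25C(\Omega-1)^{-4}$ and $x>C'(\Omega-1)\Omega^{-5}$ suffices once $C'$ is a suitable multiple of $C$; if $p(\rho)> 5(\Omega-1)^{-2}$ then $x=p(\rho)-\frac52(\Omega-1)^{-2}>p(\rho)/2\ge Cp(\rho)^2$, using $Cp(\rho)\le 1/2$.

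Second, in the extinction direction the stated range of $\rho$ includes values with $\rho\le(\Omega-1)^{-1}$, for which $p(\rho)\le 0$ and \refT{thm:phases} (which requires $0<p\le p_0$) does not apply, so "applying the theorem directly" is not legitimate there. That case must instead be dispatched by the remark preceding the theorem: $Z_n$ is stochastically dominated by a Galton--Watson process with Poisson mean $1+p\le 1$, so the process dies out almost surely. For $(\Omega-1)^{-1}<\rho<\hat{\rho}_c-C'\Omega^{-5}$ one has $0<p(\rho)<\frac52(\Omega-1)^{-2}\le p_0$ for $\Omega$ large, and there your calculation goes through as claimed.
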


A similar heuristic explanation for a threshold appeared in the case of the $k$-core in Erd\H{o}s-R\'enyi random graphs; that is, the unique maximal subgraph with minimum degree $k$ \cite{B84}. The threshold for the appearance of the giant $k$-core was first proven by Pittel, Spencer and Wormald in 1996 \cite{PSW96}; they observed 
- ``some serious gaps and leaps of faith notwithstanding'' -
that the proportion of vertices that are in the $k$-core should coincide with the probability that a Poisson branching process that mimics the exploration of the connected component of an arbitrary vertex has a $k$-regular tree as a subgraph. However, their proofs did not use any couplings between branching processes and the exploration of the random graph. Later, Riordan in 2008 \cite{R08} provided a proof using a local coupling of the random graph with a suitable Poisson branching process. 

\subsection{Outline}

In the next section we provide the necessary background on bond percolation, and explain the heuristic link between cluster exploration and the graph process $\cG(p(\rho),q_b)$, for $p(\rho)$ and $q_b$ as defined in \refC{cor:threshold}. In \refS{sec:outline} we lay out the main intermediate results (Propositions~\ref{prop:Z4}--\ref{prop:superGW}) of the paper and, assuming those, we prove \refT{thm:phases}. 
Sections~\ref{sec:UlamHarris}--\ref{sec:mainproofs} are then concerned with proving these propositions (see \refS{sec:overview} for an overview of these sections).

\section{Heuristic link to percolation in $Q_d$ and $\Z^d$}\label{sec:heuristic}

For $d\in \N$, the hypercube $Q_d$ and the lattice $\Z^d$ are graphs with vertex sets $\{0,1\}^d$ and $\Z^d$, respectively. Their edge sets are given by nearest neighbours; that is, $\{u,v\}$ is an edge if and only if $|u-v|=1$, where $|w|:=\sum_{i=1}^d |w_i|$. In what follows we write $\mathbb{G}$ to refer to either $Q_d$ or $\Z^d$ (for large values of $d$) and write $\Omega$ for the number of neighbours per vertex; that is, $\Omega=d$ if $\mathbb{G}=Q_d$ and $\Omega=2d$ if $\mathbb{G}=\Z^d$. Results and heuristics are then given in terms of $\Omega$.

For a graph $G:=(V^G,E^G)$ and $\rho\in [0,1]$, we define the bond percolation graph $G_\rho$ as the subgraph of $G$ in which each edge is present with probability $\rho$, independently from all the other edges. 
One of the central objects of study in percolation theory is the critical probability $p_c(G)$, which is rigorously defined for bond percolation on infinite graphs \cite{HH15}, and on (classes of) finite graphs; see e.g. \cite{BR06,BCHSS05a, NP08, JW18}. For an infinite graph $G$,
\begin{equation*}
    p_c(G):=\inf\{\rho\in [0,1]: \exists \text{ an infinite connected component in } G_\rho \text{ a.s.}\}.
\end{equation*}
For finite graphs, we follow the definition in \cite{BCHSS05a} and assume that $G$ is connected and transitive. In this case, the critical probability consists of a critical window rather than a precise point. For $\lambda>0$ let
\begin{equation*}
  p_c(G)=p_c(G,\lambda):=\inf\{\rho \in [0,1]: \mathbb{E}_\rho [|\cC|]\ge \lambda |V^G|^{1/3}\}, 
\end{equation*}
where $|\cC|$ is the size of the connected component containing a fixed root vertex and $\mathbb{E}_\rho$ denotes expectation with respect to $G_{\rho}$. The choice of (small) $\lambda$ is rather flexible in the case $G=Q_d$ for large $d$ \cite{HS05}. 

The first three terms in the asymptotic expansion of $p_c(\mathbb{G})$ in terms of $\Omega^{-1}$ are known, and rigorous bounds on the error terms have been proved. The critical probability is given by 
\begin{equation}\label{pc_exp}
    p_c(\mathbb{G})=\Omega^{-1}+\Omega^{-2}+\frac{7}{2}\Omega^{-3}+O(\Omega^{-4})=(\Omega-1)^{-1}+\frac{5}{2}(\Omega-1)^{-3}+O(\Omega^{-4});
\end{equation}
these expansions were first obtained by Hara and Slade for $\Z^d$ \cite{HS93}, using a delicate application of the lace expansion. Later, van der Hofstad and Slade provided a unified proof for both $Q_d$ and $\Z^d$ \cite{HS06}, again using the lace expansion. In addition, the same authors~\cite{HS05} proved the existence of asymptotic expansions to all orders in $\Omega^{-1}$ (for $Q_d$, these are independent of the choice of $\lambda)$, without computing the numerical values of these coefficients. They conjectured that the coefficient of $\Omega^{-4}$ will differ for $Q_d$ and $\Z^d$ \cite{HS06}. 

The study of the percolation threshold for $\Z^d$, and other infinite lattices, has a long history in both the mathematics and the physics literature, see e.g. \cite{HS93,HM20} and the references therein. The study of percolation in the hypercube started with the analysis of the connectivity probability \cite{ES79,B83} and bounds on the threshold for the emergence of a linear-sized component \cite{AKS82,BKL92}. A formal definition of $p_c$ for general graph classes was first given in work by Borgs et al.~\cite{BCHSS05a,BCHSS05b,BCHSS06c}. For a thorough survey of hypercube percolation see~\cite{HN14}, where van der Hofstad and Nachmias also prove an upper bound on $p_c(\mathbb G)$ corresponding to the first three terms in~\eqref{pc_exp} without using the lace expansion.

In the remainder of this section, we demonstrate the heuristic connection between our graph process $\cG(p,q)$ and an exploration process of a percolation cluster in the hypercube $Q_d$ and the lattice $\Z^d$. We will observe that the only properties of the underlying graph that we model are the transitivity of the vertices, the 4-cycle structure, and the absence of odd cycles. 

\subsection{Exploration of a percolation cluster}\label{sec:exploration}

We first define an exploration algorithm that constructs a percolation cluster in a graph $G$, i.e.~the connected component of $G_\rho$ containing some root vertex.
Each time an edge is explored by the algorithm, the edge will be added to the percolation cluster with probability $\rho \in [0,1]$.
Unlike in the usual exploration algorithms such as Breadth-first search and Depth-first search, 
where the exploration is performed by sequentially selecting  a vertex from a given set and exploring its incident edges, instead we will explore all neighbouring unexplored edges at once, mimicking the generations of offspring in a branching process in discrete time. 

Let $G:=(V^G,E^G)$ be a graph with no cycles of odd length, and take a root vertex $v_\emptyset \in V^G$.
The exploration process is a graph process $(H_n)_{n\ge 0}:=(V_n,E_n)_{n\ge 0}$ defined recursively (with $V_{n-1}\subseteq V_n$ and $E_{n-1}\subseteq E_n$ for each $n$).
Let $S_0:=\{v_\emptyset\}$ and for each $n\ge 1$, we let $S_n :=V_n \setminus V_{n-1}$ denote the set of vertices in the `$n$th generation' of the exploration process.
Initially, we set $H_0:=(\{v_\emptyset \},\emptyset)$ and we say that all edges in $G$ are `unexplored edges'.
For $n\ge 1$, after constructing $H_{n-1}$, we construct $H_n$ as follows:
\begin{enumerate}
    \item We construct $E_n\setminus E_{n-1}$ by first constructing a set of edges $F_n\subseteq E^G$.
    For each vertex $u\in S_{n-1}$, for each edge $e=\{u,v\}\in E^G \setminus E_{n-1}$, we add the edge $e$ to $F_n$ independently with probability $\rho$. 
    Then we let $E_n\setminus E_{n-1}$ be the set of edges in $F_n$ that are unexplored edges.
    (We say that edges in $F_n$ that are explored edges are `deleted' at this stage.)
    From now on, the edges in $E^G$ from each vertex in $S_{n-1}$ are `explored edges'.
    \item Let $V_n:=\{v\in V^G :\exists \{u,v\}\in E_n\}\cup \{v_\emptyset\}$, the set of vertices reached by edges in $E_n$.
    Note that $S_n=V_n\setminus V_{n-1}$ is the set of vertices reached by newly explored edges (edges in $E_n \setminus E_{n-1}$).
\end{enumerate}
See Figure~\ref{fig:perc_expl} for an illustration of this graph process with $G=Q_4$.
Observe that since edges are only added to the graph process if they were not previously explored, in which case they are added with probability $\rho$, the graph $H=\cup_{n\ge 0} H_n$ is a bond percolation cluster with edge probability $\rho$. We claim that the structure of the process $(H_n)_{n\ge 0}$ is analogous to that of $\cG(p,q)$ in the sense that {\it i)} edges only connect vertices in consecutive generations and {\it ii)} in a given generation $n\ge 2$, multiple new edges in $E_n \setminus E_{n-1}$ may `merge' (lead to the same new vertex in $S_n$). 

Indeed, note that each edge in $E_n\setminus E_{n-1}$ was not previously explored, and so cannot have an end-vertex in $V_{n-2}$.
Moreover, since the graph $G$ has no odd cycles, and since for each $u\in S_{n-1}$ there is a path in $H_{n-1}$ from $u$ to $v_\emptyset$ of length $n-1$, an edge in $E^G$ cannot have both end-vertices in $S_{n-1}$.
Hence each edge in $E_n\setminus E_{n-1}$ has one end-vertex in $S_{n-1}$ and one in $S_n$, which verifies property {\it i)} above.

For $n\ge 2$, in step 2 above it may be the case that two (or more) edges $e_1,e_2,\ldots,e_k \in E_n\setminus E_{n-1}$ have the same end-vertex $v\in S_n$ (see Figure~\ref{fig:perc_expl} for examples of this). In this case, we say that edges $e_1,\ldots, e_k$ `merge'.

\begin{figure}
\begin{minipage}[b]{.5\textwidth}
\begin{center}
\begin{tikzpicture}[scale=1]
    		\tikzstyle{vertex}=[circle,fill=black, draw=none, minimum size=4pt,inner sep=2pt]
    		\tikzstyle{vertexRoot}=[circle,fill=red, draw=none, minimum size=5 pt,inner sep=2pt]
    		\tikzstyle{vertexRootb}=[circle,fill=red!60, draw=none, minimum size=5 pt,inner sep=2pt]
		\tikzstyle{vertexDouble}=[rectangle,fill=blue, draw=none, minimum size=5 pt,inner sep=2pt]
		\tikzstyle{vertexDoubleb}=[rectangle,fill=blue!60, draw=none, minimum size=5 pt,inner sep=2pt]

		\tikzstyle{vertexExploreFrom}=[diamond,fill=brown, draw=none, minimum size=5 pt,inner sep=2pt]
    		\node[vertexRoot] (v1000) at (0,4.5){};
    		\node[vertex] (v1010) at (-.75,3){};
    		\node[vertex] (v1100) at (.75,3){};
    		\node[vertexDouble] (v1011) at (-1.5,1.5){};
    		\node[vertex] (v1110) at (0,1.5){};
    		\node[vertex] (v1101) at (1.5,1.5){};
    		\node[vertex] (v0011) at (-2,0){};
    		\node[vertex] (v1111) at (0,0){};
    		\node[vertexExploreFrom] (v1001) at (2,0){};
        		
		\node[vertex] (v0111) at (-2,-1.5){};
    		\node[vertexDoubleb] (v1011b) at (0.5,-1.5){};
    		\node[vertex] (v0001) at (2,-1.5){};
    		\node[vertexRootb] (v1000b) at (3.5,-1.5){};
    		
		\draw (v1010)--(v1000)--(v1100);
        		\draw (v1011)--(v1010)--(v1110)--(v1100)--(v1101);
        		\draw (v0011)--(v1011);
        		\draw (v1011)--(v1111)--(v1101)--(v1001);
        		\draw (v1110)--(v1111);
		
		\draw (v0011)--(v0111);%
		\draw (v1001)--(v0001);%
		\draw[black!60, dashed] (v1001)--(v1011b);%
		\draw[black!60, dashed] (v1001)--(v1000b);%
                \node[] at (.75,4.5){$(1000)$};
                \node[] at (-1.5,3){$(1010)$};
                \node[] at (1.5,3){$(1100)$};
                \node[] at (-2.4,1.95){$w=$};
                \node[] at (-2.4,1.5){$(1011)$};
                \node[] at (.65,1.5){$(1110)$};
                \node[] at (2.25,1.5){$(1101)$};
                \node[] at (-1.25,0){$(0011)$};
                \node[] at (.75,0){$(1111)$};
                \node[] at (3.15,0){$u=(1001)$};    	
                \node[] at (-1.25,-1.5){$(0111)$};
                \node[] at (2.75,-1.5){$(0111)$};
                \node[] at (-3.5,4.5){$S_0:$};
                \node[] at (-3.5,3){$S_1:$};
                \node[] at (-3.5,1.5){$S_2:$};
                \node[] at (-3.5,0){$S_3:$};
                \node[] at (-3.5,-1.5){$S_4:$};
\end{tikzpicture}
\end{center}
    \subcaption{Exploration process}\label{fig:distantRela}
\end{minipage}
\begin{minipage}[b]{.5\textwidth}
\begin{center}
\begin{tikzpicture}[scale=.2]
   	\tikzstyle{vertex}=[circle,fill=black!30, draw=none, minimum size=2pt,inner sep=1pt]
  	\tikzstyle{vertexB}=[circle,fill=black, draw=none, minimum size=2pt,inner sep=1pt]
  	\tikzstyle{vertexRoot}=[circle,fill=red, draw=none, minimum size=5 pt,inner sep=2pt]
	\tikzstyle{vertexDouble}=[rectangle,fill=blue, draw=none, minimum size=5 pt,inner sep=2pt]
	\tikzstyle{vertexExploreFrom}=[diamond,fill=brown, draw=none, minimum size=5 pt,inner sep=2pt]

    \node[vertexExploreFrom] (1001) at (0,0){};   
    \node[vertexDouble] (1011) at (0,18){};   
    \node[vertexRoot] (1000) at (7,6){};   
    \node[vertexB] (1010) at (7,12){};   
    \node[vertexB] (0001) at (9,4){};   
    \node[vertexB] (0011) at (9,22){};   
    \node[vertex] (0000) at (11,8){};   
    \node[vertex] (0010) at (11,14){};   
    \node[vertexB] (1100) at (13,6){};   
    \node[vertexB] (1110) at (13,12){};   
    \node[vertex] (0100) at (17,8){};   
    \node[vertex] (0110) at (17,14){};   
    \node[vertexB] (1101) at (18,0){};   
    \node[vertexB] (1111) at (18,18){};   
    \node[vertex] (0101) at (27,4){};   
    \node[vertexB] (0111) at (27,22){};
    \draw[color=black!30] (0000)--(1000)--(1010)--(1011)--(0011)--(0010)--  
    (0110)--(1110)--(1010)--(0010)--(0000)--(0100)--(1100)--(1000)--(1001)--
    (1011)--(1111)--(1110)--(1100)--(1101)--(1001)--(0001)--(0011)--(0111)--       (1111)--(1101)--(0101)--(0111)--(0110)--(0100)--(0101)--(0001)--(0000);
        		\draw[thick, -latex] (1000)--(1100);
        		\draw[thick, -latex] (1000)--(1010);
        		\draw[thick, -latex] (1010)--(1011);
        		\draw[thick, -latex] (1010)--(1110);
        		\draw[thick, -latex] (1100)--(1110);
        		\draw[thick, -latex] (1100)--(1101);
        		\draw[thick, -latex] (1011)--(0011);
        		\draw[thick, -latex] (1011)--(1111);
        		\draw[thick, -latex] (1101)--(1111);
        		\draw[thick, -latex] (1101)--(1001);
        		\draw[thick, -latex] (1110)--(1111);
		
		\draw[thick, -latex] (1001)--(0001);
		\draw[thick, -latex, black!30, dashed] (1001)--(1011);
		\draw[thick, -latex, black!30, dashed] (1001)--(1000);
		
		\draw[thick, -latex] (0011)--(0111);
        		
    \node[] at (-2,17){$w$};
    \node[] at (-2,0){$u$};
\end{tikzpicture}
\end{center}
    \subcaption{Subgraph $H_4$ of the hypercube $Q_4$}\label{fig:distantRelb}
\end{minipage}
    \caption{Percolating on the set of \emph{all} non-backtracking edges instead of \emph{unexplored} edges can lead to the situation above occurring: while percolating edges from $u$ (${\color{brown} \blacklozenge})$, the edges to the vertices $w$ (${\color{blue} \blacksquare})$ and the root (${\color{red} \bullet})$ may be added even though they have been excluded from the percolation cluster earlier in the exploration process. For this reason, some edges in $F_n$ may be `deleted' (are not added to $E_n\setminus E_{n-1}$).} 
    \label{fig:perc_expl}
\end{figure}
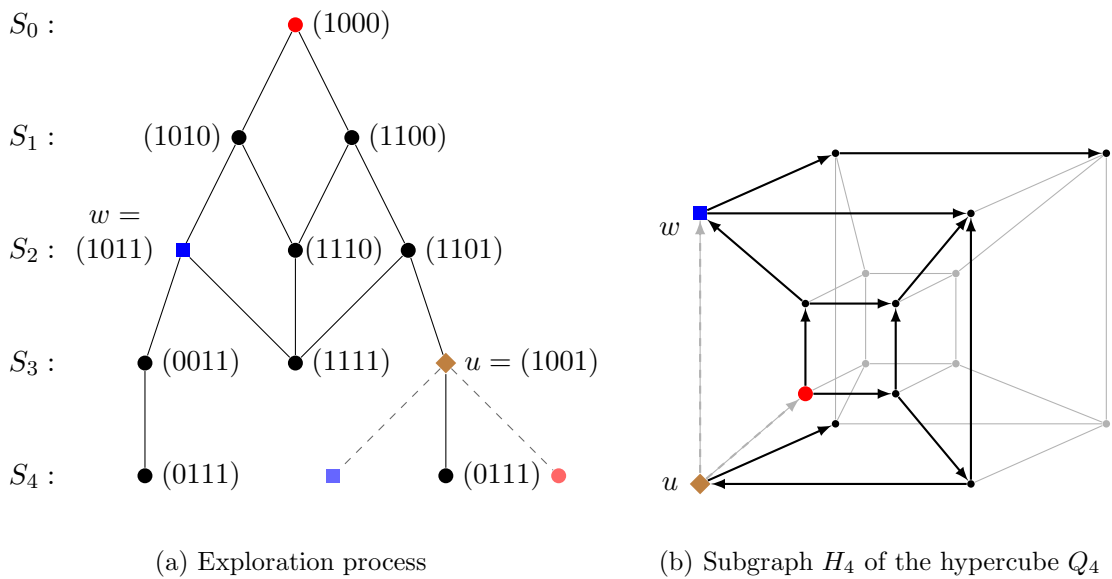  

\subsection{Heuristic link between exploration process and $\mathcal G(p,q)$}\label{sec:design}

In this section, we will show heuristically that for suitable choices of $p$ and $q$, $\mathcal G(p,q)$ is a toy model for the exploration process of a bond percolation cluster defined in Section~\ref{sec:exploration} with $G=Q_d$ or $G=\Z^d$.

Recall from the start of Section~\ref{sec:heuristic} that we let $\mathbb G=Q_d$ (in which case $\Omega =d$) or $\mathbb G=\Z^d$ (in which case $\Omega =2d$) for some large $d\in \N$.
Let us first consider $u_1,\ldots, u_5$ a non-backtracking random walk on $\mathbb{G}$. A key parameter is the probability that vertices $u_1,\ldots, u_5$ form a 4-cycle; let 
\begin{equation} \label{eq:cbdefn}
    c_b:=\p{u_1=u_5}=(\Omega-1)^{-2}+\eps_{\mathbb{G}} =(\Omega-1)^{-2}(1+O(\Omega^{-1})),
\end{equation}
where in particular $\eps_{Q_d}=0$ and $\eps_{\Z^d}=-(\Omega -1)^{-3}$. Note that in a non-backtracking random walk $u_1,\ldots,u_{2\ell+1}$, the probability that  $u_{2\ell+1}=u_1$ is $O(\Omega^{-\ell})$ for $\ell\ge 3$. 

Take $\rho\in (0,1)$; as in Corollary~\ref{cor:threshold} let  
\begin{align*}
    p(\rho):= (\Omega-1)\rho-1\quad \text{and} \quad
    q_b:=(\Omega-1)^{-2}.
\end{align*}
Let $(H_n)_{n\ge 0}$ denote the graph process defined in Section~\ref{sec:exploration} with $G=\mathbb G$ and root vertex $v_{\emptyset}=\bf 0$. We claim that the graph process $\cG(p(\rho),q_b)$ is a toy model of the exploration process $(H_n)_{n\ge 0}=(V_n,E_n)_{n\ge 0}$ (ignoring the labels in $\{0,1\}^d$ or $\Z^d$ of vertices). 

Indeed, for $n\ge 1$ and a vertex $u\in S_{n-1}$, conditional on $(H_m)_{m \le n-1}$, the number of edges from $u$ that are added to $F_n$ has distribution
\begin{equation} \label{eq:hypercubeheur1}
    \text{Bin}(\Omega - \mathrm{deg}_{n-1}(u),\rho) \stackrel{d}{\approx} \text{Poisson}((\Omega - \mathrm{deg}_{n-1}(u))\rho),
\end{equation}
where $\mathrm{deg}_{n-1}(u)$ is the degree of $u$ in the graph $H_{n-1}$. 
For $n\ge 2$ and $u\in S_{n-1}$, we have $\mathrm{deg}_{n-1}(u)\ge 1$.
We will see below that mergers happen with small probability, and so with high probability we have $\mathrm{deg}_{n-1}(u)= 1$ and $(\Omega - \mathrm{deg}_{n-1}(u))\rho=1+p(\rho)$.

For each edge $e=\{u,v\}\in F_n$, the edge $e$ is `deleted' if and only if $v\in V_{n-2}$, i.e.~if and only if there exist $\ell\ge 2$ and a non-backtracking cycle $v=v_1,\ldots, v_{2\ell}=u,v$ in $\mathbb{G}$ such that $\{v_i,v_{i+1}\}\in E_{n-1}$ for each $1\le i \le 2\ell-1$ (since the edge $\{u,v\}$ cannot have both end-vertices in $S_{n-1}$, as observed in Section~\ref{sec:exploration}).
For our toy model, we ignore the effect of cycles of length greater than four, and use~\eqref{eq:cbdefn} to approximate the probability (conditional on the graphs $(H_m)_{m\le n-1}$ without vertex labels) that the edge from $u$ is deleted by
\begin{equation} \label{eq:hypercubeheur2}
    1-(1-c_b)^{k_u}\approx 1-(1-q_b)^{k_u},
\end{equation}
where $k_u$ is the number of vertices at graph distance exactly three from $u$ in $H_{n-1}$.

In step~2 of the construction of $H_n$, note that a pair of edges $e_1,e_2\in E_n \setminus E_{n-1}$ `merge' (have a common end-vertex in $S_n$) if and only if there exist $\ell\ge 2$ and a cycle $e_1,e_2,\ldots, e_{2\ell}, e_{2\ell+1}=e_1$ of edges with $e_3,\ldots, e_{2\ell} \in E_{n-1}$.
As above, we ignore the `lower order' effect of cycles of length greater than four, and use the fact that a non-backtracking random walk of length four on $\mathbb{G}$ forms a cycle with probability $c_b$, to give us the heuristic that a pair of edges $e_1,e_2\in E_n \setminus E_{n-1}$ from $u_1,u_2\in S_{n-1}$ merges only if the graph distance from $u_1$ to $u_2$ in $H_{n-1}$ is two, in which case the edges merge with probability $q_b$. (We assume for our toy model that each pair of edges merges independently from other pairs, neglecting the effect of correlations.)

By comparing with the definition in Section~\ref{subsec:Gpqfirstdef}, this means that $\cG(p(\rho),q_b)$ mimics the exploration process of a percolation cluster on $\mathbb{G}_\rho$ by incorporating the effect of the local 4-cycle structure of $\mathbb{G}$.

\subsection{Open questions}\label{sec:Questions}

The process $\cG(p,q)$ is a generalisation of branching processes in which mergers may occur; the corresponding loss of the tree structure substantially increases the challenges in the analysis of the survival probability. In particular, establishing the monotonicity of the survival probability in $p$ and $q$ is left open (see the discussion after Theorem~\ref{thm:phases}).

As we saw in Section~\ref{sec:design},
the particular forms of the (inhomogeneous) offspring distribution and merger probabilities are tuned so that for suitable $p$ and $q$, the process $\cG(p,q)$ is a toy model for the exploration of bond percolation clusters on both $Q_d$ and $\Z^d$ for large $d$. We expect that if the model
was modified to take into account the effect of
the cycle structure of $Q_d$ or $\Z^d$ up to cycles of length $2k$, for some $k\ge 3$,
then Corollary~\ref{cor:threshold} would still hold for such a model.

For integers $d,n\ge 1$, the Hamming graph $H(d,n)$ has vertex set $\{0,1,\ldots,n-1\}^d$ and two vertices $u$ and $v$ are adjacent if they differ in exactly one coordinate. In other words, $H(d,n)$ is the Cartesian product of $d$ complete graphs on $n$ vertices; in particular $H(d,2)=Q_d$. The first three terms in the expansion of the critical percolation probability for Hamming graphs, as $n\to \infty$, are obtained in \cite{FHHH20}; the first two terms were established in \cite{BCHSS05b}. Similarly, the first three terms in the expansion of the critical probability for site percolation on $\Z^d$ as $d\to \infty$ have been rigorously established in \cite{HM19}. 

It would be interesting to define a branching process with deletions and mergers (with adjusted offspring distribution and merge probabilities) that {\it i)} serves as a toy model of cluster exploration for Hamming graphs $H(d,n)$ for large $n$ (respectively, site percolation of $\Z^d$ for large $d$) and {\it ii)} has a survival threshold that matches the known expansion of the critical probability.

\section{Outline of the proof of Theorem~\ref{thm:phases}}\label{sec:outline}

We shall use an auxiliary graph process $\cG'(p,q):=(G'_n)_{n\ge 0}$ in the proof of Theorem~\ref{thm:phases}.
The process $\cG'(p,q)$ is defined in exactly the same way as the definition of the process $\cG(p,q)$ in Section~\ref{subsec:Gpqfirstdef}, except that the mean offspring number for individuals in generations 0, 1 and 2 is given by $m_{k_u+1}$ (instead of $m_{k_u}$). See Section~\ref{sec:UlamHarris} below for a detailed construction (of $\mathcal G(p,q)$ and $\mathcal G'(p,q)$) using Ulam-Harris notation. For $n\ge 0$, let $Z'_n$ denote the number of individuals in the $n$th generation of $\cG'(p,q)$.

We now state a straightforward bound on the growth of the expected size of generations, and a technical inequality for the size of early generations. 

\begin{prop}\label{prop:Z4}
For any $p,q \in [0,1]$ and $n\ge 1$,
\begin{align} 
    \E{Z_n}&\le (1+p-\I{n\ge 4} q)\E{Z_{n-1}}. \label{eq:Z4}
\end{align}
There exist $C_1>0$ and $p_1\in (0,1)$ such that if $0\le q\le p \le p_1$ then
\begin{align}
    (1+C_1 p)\E{Z_6}&\ge \max_{0\le n\le 5} \E{Z_n}. \label{eq:Z3}
\end{align}
Moreover, \eqref{eq:Z4} and \eqref{eq:Z3} also hold when $Z_m$ is replaced with $Z'_m$ for each $m$. 
\end{prop}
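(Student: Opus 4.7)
\textbf{Proof plan for Proposition~\ref{prop:Z4}.}

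\emph{Part 1 (inequality~\eqref{eq:Z4}).} The key graph-theoretic observation is that every $u\in I_{n-1}$ satisfies $k_u\ge 1$ whenever $n\ge 4$. By construction, every edge of $G_{n-1}$ joins vertices in two consecutive generations, so $G_{n-1}$ is bipartite with parts $\bigcup_j I_{2j}$ and $\bigcup_j I_{2j+1}$. For $n\ge 4$, any $u\in I_{n-1}$ has a genealogical great-grandparent $w\in I_{n-4}$, giving a path of length 3 from $u$ to $w$. Since $n-1$ and $n-4$ have opposite parity, the graph distance from $u$ to $w$ is odd; since all neighbours of $u$ lie in $I_{n-2}\neq I_{n-4}$, this distance is at least 3 and hence exactly 3, so $k_u\ge 1$. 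Because mergers only decrease generation sizes, $\E{Z_n\mid G_{n-1}}\le \sum_{u\in I_{n-1}} m_{k_u}$, and the bound $m_{k_u}\le (1+p)(1-q)\le 1+p-q$ (for $n\ge 4$) or $m_{k_u}\le m_0=1+p$ (otherwise) yields~\eqref{eq:Z4} after taking expectations. The same argument handles $Z'_n$: offspring of generations $0,1,2$ use $m_{k_u+1}\le m_0=1+p$ (relevant only when $n\le 3$), and offspring of later generations use the same $m_{k_u}$ as in $\cG(p,q)$.

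\emph{Part 2 (inequality~\eqref{eq:Z3}).} The plan is to complement Part 1 with a matching lower bound $\E{Z_n}\ge (1+p)^n-Kp$ for some constant $K=K(p_1)$ and all $0\le n\le 6$ (using $q\le p$). Combined with $\max_{0\le n\le 5}\E{Z_n}\le (1+p)^5$ from Part 1, this gives~\eqref{eq:Z3}, since for $p_1$ small enough the inequality $(1+p)^5\le (1+C_1p)\big((1+p)^6-Kp\big)$ reduces to $C_1\big((1+p)^6-Kp\big)\ge K-(1+p)^5$, which is solvable with $C_1\ge \max(1,2K)$ once $Kp_1\le 1/2$. For the lower bound I would use $Z_n\ge Y_n-M_n$, where $Y_n=|\tilde I_n|$ and $M_n$ is the number of pairs $(i,j)$ with $B_{i,j}=1$ at graph distance 4 in $\tilde G_n$. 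Conditioning on $G_{n-1}$ and applying Bernoulli's inequality $(1-q)^{k_u}\ge 1-qk_u$ gives
\[
    \E{Y_n\mid G_{n-1}}\ge (1+p)\Big(Z_{n-1}-q\sum_{u\in I_{n-1}}k_u\Big),
\]
while $\E{M_n\mid \tilde G_n}\le q\cdot\big|\{\text{distance-4 pairs in }\tilde I_n\}\big|$. Since generation sizes of $\cG(p,q)$ are stochastically dominated by those of a Galton--Watson process with Poisson$(1+p)$ offspring, for $n\le 6$ both $\E{\sum_u k_u}$ and the expected number of distance-4 pairs are bounded by a constant depending only on $p_1$. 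Iterating the resulting inequality $\E{Z_n}\ge (1+p)\E{Z_{n-1}}-Cp$ over $n=1,\dots,6$ yields the claimed lower bound; the argument for $Z'_n$ is identical up to adjusted constants in the first three generations.

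\emph{Main obstacle.} The only real subtlety is bounding $\E{\sum_{u\in I_{n-1}} k_u}$ and the expected number of distance-4 pairs in $\tilde G_n$ uniformly over $n\le 6$. These reduce to moment bounds on generation sizes of the dominating Poisson$(1+p)$ Galton--Watson process (which are trivially finite for bounded $n$), but one must do the bookkeeping carefully to verify that the accumulated correction to $(1+p)^n$ is genuinely $O(p)$ rather than $O(1)$ — otherwise the ratio $(1+p)^5/\E{Z_6}$ cannot be absorbed into a factor $1+C_1p$.
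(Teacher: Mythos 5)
Your Part 1 is essentially the paper's proof of~\eqref{eq:Z4}: the paper likewise observes that for $n\ge 4$ every $u\in I_{n-1}$ has a great-grandparent, hence $k_u\ge 1$, and then bounds $\E{Z_n}\le \E{|\tilde I_n|}=\E{\sum_{u\in I_{n-1}}|\cC_u|}\le (1+p)(1-q)^{\I{n\ge 4}}\E{Z_{n-1}}$ (your bipartiteness remark just makes explicit why the distance is exactly $3$, which the paper leaves implicit). For Part 2 your route is genuinely different and also works. The paper couples $(Z_n)_{n\le 6}$ and $(Z'_n)_{n\le 6}$ with the pure Poisson Galton--Watson sizes $W_n$ of the full Ulam--Harris tree $I^*$, introduces the single global event $A$ that \emph{no} deletion or merger variable indexed by $I^*$ fires, notes $Z_n=Z'_n=W_n$ on $A$, and then shows $\E{W_n\I{A^c}}\le C''q$ by a union bound over the $O(|I^*|^2)$ Bernoulli variables followed by Cauchy--Schwarz and a fourth-moment bound on the $W_k$. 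You instead run a generation-by-generation recursion $\E{Z_n}\ge (1+p)\E{Z_{n-1}}-Kq$, splitting the loss into deletions (controlled via $\E{\sum_{u\in I_{n-1}}k_u}$) and mergers (controlled via $Z_n\ge Y_n-M_n$ and the expected number of distance-$4$ pairs), both of which reduce to second moments of the dominating Galton--Watson process and are therefore $O(1)$ for $n\le 6$; iterating gives $\E{Z_6}\ge (1+p)^6-K'p$, from which the choice of $C_1$ and $p_1$ follows exactly as you describe. The paper's event-$A$ argument is a single self-contained computation and gives the two-sided bound $(1+p)^n-C''p\le \E{Z_n}\le (1+p)^n$ for all $n\le 6$ at once; your recursion is cruder per step but reuses the quantities $Y_n$, $M_n$, $k_u$ that the paper develops anyway for \refP{prop:Zn}, so it is stylistically more uniform with the rest of the argument. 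Two minor points to tidy in a full write-up: the inequality $Z_n\ge Y_n-M_n$ needs the (easy) observation that each merger class of size $\ell$ contains at least $\ell-1$ merging pairs, and the case $n=1$ should be noted separately since there are no mergers there; and for $\cG'(p,q)$ the extra independent deletion in generations $1$--$3$ contributes an additional $q(1+p)\E{Z'_{n-1}}=O(p)$ loss per step, which your "adjusted constants" remark covers but should be stated.
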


The first main intermediate result in the proof of Theorem~\ref{thm:phases} is
an estimate on the growth of the expected size of the generations of the graph process; up to first order terms, the expected size of the generations grows by a factor of $1+p-\frac{5}{2}q$. The following proposition gives us upper and lower bounds on this growth.

\begin{prop}\label{prop:Zn}
There exists $C_2>0$ such that for $0 \le q\le p\le 1$ and for $n\ge 6$, $m\ge 7$,
\begin{align}
    \E{Z_n}&\le (1+p-\tfrac 52 q)\E{Z_{n-1}}+ C_2 p^2 \max_{2\le k\le 6}\E{Z_{n-k}}, 
    \label{eq:upperZ}\\
    \text{ and} \quad \E{Z_m}&\ge (1+p)\E{Z_{m-1}}-\tfrac{5}{2}q(1+p)^4 \max_{2\le k\le 4} \E{Z_{m-k}}-C_2 q^2 \E{Z_{m-4}}.
    \label{eq:lowerZ}
\end{align}
Moreover, \eqref{eq:lowerZ} also holds when $Z_l$ is replaced with $Z'_l$ for each $l$.
\end{prop}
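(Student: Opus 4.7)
The plan is to write $Z_n = \tilde Z_n - D_n$, where $\tilde Z_n := |\tilde I_n|$ is the generation size after deletions but before cousin-mergers and $D_n := \tilde Z_n - Z_n$ is the reduction caused by mergers, and to estimate $\tilde Z_n$ conditionally on $G_{n-1}$ and $D_n$ conditionally on $\tilde G_n$. Given $G_{n-1}$, each $u\in I_{n-1}$ independently produces a $\Poi{m_{k_u}}$-distributed number of offspring, so
\[
  \E{\tilde Z_n \mid G_{n-1}} \;=\; (1+p)\sum_{u\in I_{n-1}}(1-q)^{k_u}.
\]
Using the two-sided inequalities $1-qk\le(1-q)^k\le 1-qk+\binom{k}{2}q^2$ (valid for all $k\in\N_0$ and $q\in[0,1]$), the analysis of $\E{\tilde Z_n}$ reduces to estimating $\E{K_n}$, where $K_n:=\sum_{u\in I_{n-1}}k_u$, together with a $q^2$ remainder involving $R_n:=\sum_u\binom{k_u}{2}$.

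Since edges only join consecutive generations, the distance-$3$ vertices from $u\in I_{n-1}$ in $G_{n-1}$ lie either in $I_{n-2}$ or in $I_{n-4}$. Under the tree approximation, I would count length-$3$ paths $u\to u'\to g\to v$: the ``great-grandparent'' path ($v\in I_{n-4}$) contributes $(1+p)^2\E{Z_{n-3}}$, while the ``sibling-of-parent'' path through a grandparent $g\in I_{n-3}$ ($v\in I_{n-2}$) contributes $(1+p)^3\E{Z_{n-3}}$, so $\E{K_n}\approx(2+p)\E{Z_{n-1}}$ and $(1+p)q\E{K_n}=2q\E{Z_{n-1}}+O(pq)\max_{2\le k\le 4}\E{Z_{n-k}}$, yielding the ``$4/2$'' part of the $5/2$. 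For the merger loss, conditional on $\tilde G_n$ we have $\E{D_n\mid\tilde G_n}= qM_n$ up to an $O(q^2)$ correction for overlapping merges, where $M_n$ counts pairs in $\tilde I_n$ at graph distance $4$ in $\tilde G_n$. Such pairs arise generically from grandchildren of a common grandparent $w\in I_{n-2}$ through two distinct children of $w$: summing $\E{\binom{K}{2}}(1+p)^2$ for $K\sim\Poi{1+p}$ gives $\E{M_n}\approx\tfrac{(1+p)^4}{2}\E{Z_{n-2}}$ and hence $\E{D_n}\approx\tfrac{q}{2}\E{Z_{n-1}}$, the remaining ``$1/2$''.

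Combining gives $\E{Z_n}=(1+p-\tfrac{5}{2}q)\E{Z_{n-1}}+\text{error}$. For the upper bound I would use $q\le p$ and \refP{prop:Z4} to compare neighbouring generation sizes, consolidating the errors (from $q^2R_n$, from $O(pq)$ correction terms, and from non-tree configurations whose additional coincidences reach further back) into $C_2 p^2\max_{2\le k\le 6}\E{Z_{n-k}}$. For the lower bound I use only the inequality $(1-q)^k\ge 1-qk$ together with the upper estimates $\E{K_n}\le C(1+p)^3\max_{2\le k\le 4}\E{Z_{m-k}}$ obtained by enumerating all length-$3$ paths and $\E{M_n}\le \tfrac{(1+p)^4}{2}\E{Z_{m-2}}$; these combine into the coefficient $\tfrac{5}{2}q(1+p)^4$, with a genuine second-order correction of size $C_2 q^2\E{Z_{m-4}}$ arising from non-tree contributions to these counts. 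For the auxiliary process $\cG'(p,q)$, the offspring mechanism differs only in generations $0,1,2$, so for $m\ge 7$ the one-step recursion from generation $m-1$ involves only individuals with the standard $\Poi{m_{k_u}}$ offspring, and the same argument yields the lower bound with $Z'_\cdot$ in place of $Z_\cdot$.

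The main obstacle is that the tree approximation used in counting $K_n$ and $M_n$ is not exact: vertices in earlier generations may have multiple parents from previous mergers, creating short cycles that distort both the distance-$3$ and cousin-pair counts. I would control these corrections by isolating the leading tree-type contribution and bounding each non-tree configuration by an additional factor of $q$, since every extra coincidence in the history stems from a merger of probability $q$; propagating these corrections through the recursion is what forces the maxima over $\{2,\ldots,6\}$ (respectively $\{2,3,4\}$) of past generations in the statement, and the requirement $n\ge 6$, $m\ge 7$ ensures enough generations have elapsed for all the relevant distance-$3$ and distance-$4$ configurations to exist.
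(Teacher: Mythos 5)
Your proposal is correct in outline and follows essentially the same route as the paper: decompose $Z_n$ into the pre-merger generation size $Y_n=|\tilde I_n|$ minus the merger loss, expand $(1-q)^{k_u}$ to reduce the deletion effect to counting distance-three relatives (great-grandparents and aunts giving the $2q$, cousin pairs at distance four giving the $\tfrac q2$), and absorb the non-tree corrections --- each of which costs an extra factor of $q\le p$ because it requires a prior merger --- into the $O(p^2)$ (resp.\ $O(q^2)$) error terms. The paper executes exactly this plan, with the technical work you defer carried out via the restriction to single-grandparent vertices for the sibling/aunt lower bounds, the ``distant relative forces two mergers'' argument, and a small graph-theoretic lemma handling overlapping mergers within an equivalence class.
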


It will become clear in the proof of this result that of the $5/2$ coefficient in~\eqref{eq:upperZ} and~\eqref{eq:lowerZ}, a 1/2 comes from the effect of cousin mergers, and the remaining 4/2 comes from deletions (see the definitions in \eqref{eq:Mndefn}--\eqref{eq:Yndefn}
together with Lemmas~\ref{lem:EML'} and~\ref{lem:EM'} and \eqref{eq:propZnineq}). 

The second main intermediate step in the proof of survival in Theorem~\ref{thm:phases} is to prove that after a constant number of generations the process contains some individuals that \emph{renew} the graph process dynamics in the sense that their descendants form i.i.d.~subprocesses. (We will prove a lower bound on the expected number of such individuals.) This will mean that we can observe an underlying Galton-Watson process occurring with `long-range' generations. We now make this idea precise. 

An individual $v\in I_n$ for some $n\ge 3$ is called a \emph{renewed vertex} if there are exactly three individuals in $G_{n}$ at graph distance at most three from $v$. In genealogical terms, a renewed vertex has exactly one great-grandparent, one grandparent, and one parent, and it has no aunts or siblings; see Figure~\ref{fig:renewed} for a representation of the ancestry and descendants of a renewed vertex. For $n\ge 3$, let $R_n$ be the number of renewed vertices in generation $n$ for the process $\cG(p,q)$, and let $R'_n$ denote the corresponding quantity for the process $\mathcal G'(p,q)$.

\begin{figure}
\begin{subfigure}[t]{.55\textwidth}
    \hfill
	\begin{center}
	\begin{tikzpicture}[scale=0.7]
    		\tikzstyle{vertex}=[circle,fill=black, draw=none, minimum size=4pt,inner sep=2pt]
    		\node[vertex] (t) at (-3,6){};
    		\node[vertex] (u) at (-1.5,6){};
    		\node[vertex] (v) at (0,6){};
    		\node[vertex] (w) at (1.5,6){};
    		\node[vertex] (u1) at (-2.25,4){}; 
    		\node[vertex] (u2) at (-1,4){};
    		\node[vertex] (v1) at (.5,4){};
    		\node[vertex] (w1) at (2,4){};
    		\node[vertex] (u11) at (-3,2){};
    		\node[vertex] (u21) at (-1.5,2){};
    		\node[vertex] (v11) at (0,2){};
    		\node[vertex] (w11) at (1.25,2){};
    		\node[vertex] (w12) at (2.75,2){};
    		\node[vertex] (u111) at (-2.75,0){};
    		\node[vertex] (u211) at (-1,0){};
    		\node[vertex] (v111) at (0.5,0){};
    		\node[vertex] (w121) at (2.5,0){};
    		\draw 
    		(u1)--(t) 
    		(u)--(u1)--(u11)--(u111)
    		(u)--(u2)--(u21)--(u211)
    		(v)--(v1)--(v11)--(v111)
    		(w)--(w1)--(w11)
    		(w1)--(w12)--(w121);
        	\node[] at (-1,-0.5) {$u$};
        	\node[] at (0.5,-0.5) {$v$};
        	\node[] at (3.5,0) {$I_n$};
    \end{tikzpicture}
    \end{center}
\subcaption{Vertices $u$ and $v$ are the only renewed vertices among the depicted vertices in $I_n$.}\label{fig:renewedcases}
\end{subfigure}
\begin{subfigure}[t]{.45\textwidth}
    \hfill
	\begin{center}
	\begin{tikzpicture}[scale=0.7]
    		\tikzstyle{vertex}=[circle,fill=black, draw=none, minimum size=4pt,inner sep=2pt]
    		\node[vertex, color=gray] (a3) at (-.5,6){};
    		\node[vertex, color=gray] (a2) at (0,5){};
    		\node[vertex, color=gray] (a1) at (-.5,4){};
    		\node[vertex] (v) at (0,3){}; 
    		\node[vertex] (v1) at (-1.5,1.50){};
    		\node[vertex] (v2) at (0,1.5){};
    		\node[vertex] (v3) at (1.5,1.5){};
    		\node[vertex] (v11) at (-2.5,0){};
    		\node[vertex] (v12) at (-.5,0){};
    		\node[vertex] (v21) at (1.25,0){};
    		\node[vertex] (v31) at (2.75,0){};
     		\draw (a3)--(a2)--(a1)--(v)--(v1)--(v11)
    		(v1)--(v12)
    		(v)--(v2)--(v21)--(v3)
    		(v)--(v3)--(v31);
        	\node[] at (0.5,3) {$v$};
        	\node[color=white] at (0,-0.5) {$q$};
    \end{tikzpicture}
    \end{center}
\subcaption{Descendants of a renewed vertex $v$.}\label{fig:renewedlaw}
    \end{subfigure}
    \caption{Renewed vertices in $I_n$ have exactly three relatives in $G_{n}$ at distance at most three. It turns out that the graph process of their descendants has the law of $\mathcal{G}'(p,q)$ (see Lemma~\ref{lem:distGu} below).}
    \label{fig:renewed}
\end{figure}
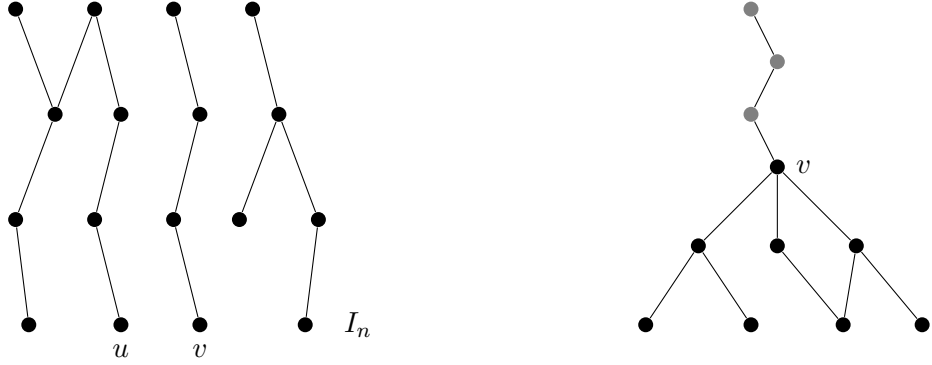
\begin{prop}\label{prop:ERn}
There exist $\eta,p_2\in (0,1)$ such that for $n\ge 9$ and $0\le q\le p\le p_2$,
\begin{align} \label{eq:propERn}
\E{R_n}\ge 2\eta \E{Z_{n-3}}-\eta \max_{3\le k\le 6} \E{Z_{n-k}};
\end{align}
 the same inequality holds when $R_n$, $Z_{n-3}$ and $Z_{n-k}$ are replaced with $R'_n$, $Z'_{n-3}$ and $Z'_{n-k}$.
\end{prop}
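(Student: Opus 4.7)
The plan is to prove \refP{prop:ERn} by a first-moment computation that bounds $\E{R_n}$ from below by counting, for each $gg\in I_{n-3}$, the probability that $gg$ is the great-grandparent of a renewed vertex via a pure chain of descent. The starting observation is a combinatorial characterization: $v\in I_n$ is renewed if and only if there is a chain $v\leftarrow p\leftarrow g\leftarrow gg$ with $p\in I_{n-1}$, $g\in I_{n-2}$, $gg\in I_{n-3}$ such that $v$ has unique parent $p$, the vertex $p$ has unique parent $g$ and unique child $v$, $g$ has unique parent $gg$ and unique child $p$, and $gg$ has unique child $g$. Under this chain condition any relative of $v$ reached through a sibling of $gg$, an extra parent, or a merged cousin sits at graph distance $\ge 4$ from $v$, so the vertices of $G_n$ within distance $3$ of $v$ are exactly $\{p,g,gg\}$. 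Writing $X_{gg}\in\{0,1\}$ for the indicator that $gg$ spawns such an isolated chain of descendants ending in some $v\in I_n$, one has $R_n=\sum_{gg\in I_{n-3}} X_{gg}$.

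The next step is to condition on the filtration $\cF_{n-3}$ generated by the process up to generation $n-3$ and bound $\p{X_{gg}=1\mid\cF_{n-3}}$ from below. The event is the intersection of three unique-offspring events (at $gg$, $g$, $p$) and three no-merger events (for the single offspring appearing in $\tilde I_{n-2}$, $\tilde I_{n-1}$, $\tilde I_n$, respectively). Using the Poisson$(m_{k_u})$ offspring structure from \refS{subsec:Gpqfirstdef}, the probability that a vertex $u$ has exactly one surviving offspring is $m_{k_u}e^{-m_{k_u}}\ge (1+p)(1-q)^{k_u}e^{-(1+p)}$, and given that unique offspring, the probability it escapes all cousin mergers is at least $(1-q)^N$ where $N$ counts the potential cousins of $u$'s child in the relevant $\tilde I$. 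I would then single out the \emph{chain-like} $gg$, meaning those whose own ancestry back to generation $n-6$ is itself a pure chain (no sibling, no aunt, no merged parent, unique great-grandparent in $I_{n-6}$). Direct inspection of the bipartite graph geometry gives $k_{gg}=k_g=k_p=1$ and zero potential cousins throughout the three generations of descent, so $\p{X_{gg}=1\mid\cF_{n-3}}\ge c_0$ for an absolute constant $c_0>0$ valid for all sufficiently small $p,q$.

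Let $B_{n-3}\subseteq I_{n-3}$ be the set of non-chain-like $gg$. Each $gg\in B_{n-3}$ witnesses one of finitely many ``defect types''---an extra sibling of $gg$ in $I_{n-3}$, an extra aunt in $I_{n-4}$, an extra parent arising from a merger at generation $n-3$, $n-4$, or $n-5$ (so an extra vertex in $I_{n-4}$, $I_{n-5}$, or $I_{n-6}$), or an extra great-grandparent in $I_{n-6}$---and each defect can be charged to a single witnessing vertex in some $I_{n-k}$ with $3\le k\le 6$. A union bound then gives $\E{|B_{n-3}|}\le c_1\sum_{k=3}^{6}\E{Z_{n-k}}\le 4c_1\max_{3\le k\le 6}\E{Z_{n-k}}$ for a constant $c_1$ depending only on an upper bound for $p$. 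Combining,
\begin{equation*}
\E{R_n}\ge c_0\,\E{|I_{n-3}\setminus B_{n-3}|}\ge c_0\E{Z_{n-3}}-4c_0c_1\max_{3\le k\le 6}\E{Z_{n-k}},
\end{equation*}
so choosing $\eta=\min(c_0/2,\,1/(4c_1))$ proves \eqref{eq:propERn}. The same argument handles $R'_n$ and $Z'_\cdot$: for $n\ge 9$ the four generations $n-3,\dots,n$ use the standard offspring distribution $m_{k_u}$ in both $\cG(p,q)$ and $\cG'(p,q)$, so only the $\cF_{n-3}$-measurable past sees the modified dynamics, while the conditional estimates above are unchanged.

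The principal obstacle is the treatment of cousin mergers. These couple descendants of distinct ancestors $gg,gg'\in I_{n-3}$, so the indicators $X_{gg}$ are not conditionally independent given $\cF_{n-3}$. Restricting to chain-like $gg$ is what circumvents this (a chain-like $gg$ has no cousins available to merge with in any of the three generations of descent leading to $v$), but the defect classification must be performed carefully enough that each element of $B_{n-3}$ is charged to exactly one vertex in some earlier $I_{n-k}$, so that the union bound produces the linear error $\max_{3\le k\le 6}\E{Z_{n-k}}$ rather than the quadratic error one would get from, for instance, counting ordered pairs of defect-witnessing vertices.
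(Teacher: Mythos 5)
There is a genuine gap in the error estimate, and it is fatal to the quantitative form of the proposition. Your ``chain-like'' restriction excludes every $gg\in I_{n-3}$ that has a sibling, an aunt, or a non-unique great-grandparent, and these are \emph{constant-density} events, not rare ones: in a Poisson$(1+p)$ offspring process the expected number of vertices in $I_{n-3}$ whose parent has at least two children is already $(1-e^{-(1+p)})\E{Z_{n-3}}>\tfrac12\E{Z_{n-3}}$, and requiring a pure chain back to $I_{n-6}$ cuts the surviving density down to roughly $e^{-3}$. Consequently your bound $\E{|B_{n-3}|}\le 4c_1\max_{3\le k\le 6}\E{Z_{n-k}}$ can only hold with $c_1$ of order one, whereas putting $c_0\E{Z_{n-3}}-4c_0c_1\max_k\E{Z_{n-k}}$ into the required form $2\eta\E{Z_{n-3}}-\eta\max_k\E{Z_{n-k}}$ forces $4c_1\le\tfrac12$. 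In the regime where $\E{Z_m}$ is non-decreasing (exactly the regime in which the proposition is applied in the proof of Theorem~\ref{thm:phases}, where one needs $\E{R_n}\ge\eta\E{Z_{n-3}}$), your right-hand side $c_0(1-4c_1)\E{Z_{n-3}}$ is negative. Note also that $I_{n-3}\setminus B_{n-3}$ is essentially $\cR_{n-3}$, so trying to salvage the argument by lower-bounding its expectation directly is circular.

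The conceptual error is treating the \emph{presence} of a sibling or aunt of $gg$ as a defect. It is not: a sibling of $gg$ only raises $k$-values along the descent chain by $O(1)$, hence raises deletion probabilities by $O(q)$, and creates $O(1)$ potential cousin pairs each of which merges with probability $q$. The paper's proof exploits exactly this: the candidate set $T_n=|\{u\in I_{n-3}:\xi_u=\xi_{u1}=\xi_{u11}=1\}|$ constrains only the three i.i.d.\ offspring variables along the chain (so $\E{T_n}=((1+p)e^{-(1+p)})^3\E{Z_{n-3}}$ with no condition on the ancestry of $u$), and the only events that can prevent $u111$ from being renewed are deletions and mergers in generations $n-2,n-1,n$, each spoiling boundedly many chains (Lemma~\ref{lem:RnCoupling}). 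The whole analytic weight of the proposition is then in showing $\E{K_m}+2\E{M_m}=O(p)\max_{2\le k\le4}\E{Z_{m-k}}$, which requires the lower bound~\eqref{eq:EYlower} on $\E{Y_m}$ from Lemma~\ref{lem:EY} (and hence the distant-relatives machinery of Lemma~\ref{lem:ERel}) together with the merger bound~\eqref{eq:EM}. Your proposal sidesteps this machinery entirely, but only by importing the over-restrictive chain-like condition that destroys the constant. (Two smaller points: your opening ``if and only if'' characterisation of renewed vertices is wrong in the ``only if'' direction, since the great-grandparent of a renewed vertex may have other children; and even for chain-like $gg$ you need to rule out distant relatives to get $k_{gg}=1$, which is an additional $O(q^2)$ correction.)
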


Suppose for some $N\ge 3$ that individual $v\in I_N$ is a renewed vertex. 
Then the children of individual $v$ in the graph $\tilde G_{N+1}$ have no cousins and hence cannot merge with any other vertices. Similarly, every descendant of $v$ in generation $N+2$ and subsequent generations can only merge with other individuals whose only ancestor in generation $N$ is $v$. This suggests that the graph processes given by the descendants of different renewed vertices in generation $N$ are independent; in fact, their distribution is given by $\cG'(p,q)$ (a formal statement is given in \refL{lem:distGu}). 
This will enable us to prove the following coupling result. 

\begin{prop}\label{prop:superGW}
Take $N\ge 3$. Define a Galton-Watson process $(X_n)_{n\geq 1}$ as follows.
Let $X_1\stackrel{d}{=}R_N$ and, for $n\geq 1$, let
$$ X_{n+1}:=\sum_{j=1}^{X_n} \xi^{(n)}_j, $$
where $\{\xi^{(n)}_j, n\in \N,j\in \N\}$ are i.i.d.~with $\xi_j^{(n)}\stackrel{d}{=}R'_N$.
We can couple $(Z_n)_{n\geq 1}$ and $(X_n)_{n\geq 1}$ in such a way that 
$Z_{n N} \geq X_{n} \,\, \forall n\in \N$.
\end{prop}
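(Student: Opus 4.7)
The strategy is to embed a Galton-Watson tree inside $\cG(p,q)$ whose individuals at level $n$ are a subset of the renewed vertices at generation $nN$; the bound $Z_{nN}\ge X_n$ is then automatic since $X_n$ counts distinct individuals in $I_{nN}$. For a renewed vertex $v\in I_{nN}$, write $T_v$ for the subprocess consisting of $v$ together with all its descendants in $\cG(p,q)$. The two key structural facts I will use are: \emph{(i)} if $v,v'\in I_{nN}$ are distinct renewed vertices, then the subprocesses $T_v$ and $T_{v'}$ are vertex-disjoint and mutually independent; and \emph{(ii)} each $T_v$, with $v$ relabelled as its root, has the law of $\cG'(p,q)$. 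Fact \emph{(ii)} is the content of Lemma~\ref{lem:distGu}, and it is precisely the purpose of the $m_{k_u+1}$-shift in the first three generations of $\cG'$: the external great-grandparent, grandparent and parent of $v$ each contribute exactly one additional distance-three vertex to the offspring counts during generations $0$, $1$ and $2$ of $T_v$, while from generation $3$ onward the internal and external distance-three counts coincide.

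\textbf{Disjointness and independence.} For \emph{(i)}, recall that a merger occurs only between cousins, i.e.~two vertices in the same generation sharing a common grandparent. Since a renewed vertex has no siblings, aunts or uncles, an induction on $k\ge 0$ shows that no descendant of $v$ at generation $nN+k$ can share a grandparent with a descendant of $v'$, so $T_v$ and $T_{v'}$ remain vertex-disjoint for all $k$. Consequently the Poisson offspring variables and the Bernoulli merger variables driving $T_v$ and $T_{v'}$ are drawn from disjoint index sets (best formalised in the Ulam-Harris framework of Section~\ref{sec:UlamHarris}), so $T_v$ and $T_{v'}$ are genuinely independent. The same induction, applied within an already-established $\cG'(p,q)$ subtree, also shows that the subtree of a renewed vertex of $\cG'(p,q)$ is again distributed as $\cG'(p,q)$, so both \emph{(i)} and \emph{(ii)} propagate under iteration.

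\textbf{The coupling.} Set $X_1:=R_N$ and enumerate the renewed vertices of $I_N$ as $v^{(1)}_1,\dots,v^{(1)}_{X_1}$. Suppose inductively that we have constructed renewed vertices $v^{(n)}_1,\dots,v^{(n)}_{X_n}\in I_{nN}$ whose subprocesses $T_{v^{(n)}_i}$ are pairwise disjoint, mutually independent, and each distributed as $\cG'(p,q)$. Let $\xi^{(n)}_i$ be the number of renewed vertices at internal generation $N$ of $T_{v^{(n)}_i}$. By \emph{(i)}--\emph{(ii)}, the variables $(\xi^{(n)}_i)_{1\le i\le X_n}$ are i.i.d.~with law $R'_N$, so $X_{n+1}:=\sum_{i=1}^{X_n}\xi^{(n)}_i$ satisfies the Galton-Watson recursion in the statement, and the $X_{n+1}$ associated vertices are distinct elements of $I_{(n+1)N}$, yielding $X_{n+1}\le Z_{(n+1)N}$; their own subprocesses satisfy the induction hypothesis by \emph{(i)}--\emph{(ii)}, closing the induction. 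The only non-trivial ingredient is fact \emph{(ii)}; once Lemma~\ref{lem:distGu} is available the rest is a clean induction, and the principal bookkeeping challenge will be to describe all of the subprocesses simultaneously via a common family of independent offspring and merger variables---precisely what the Ulam-Harris framework is designed to do.
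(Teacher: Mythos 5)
Your proposal is correct and follows essentially the same route as the paper: the paper also builds the Galton--Watson process by iteratively selecting renewed vertices in generations $N, 2N, 3N, \dots$ whose generation-$nN$ ancestors were previously selected, and invokes Lemma~\ref{lem:distGu} (together with the disjointness statement of Lemma~\ref{lem:Iu}) to conclude that, conditional on $\cF_{nN}$, the numbers of renewed descendants $N$ generations later are i.i.d.\ with the law of $R'_N$. The only cosmetic difference is that you phrase the induction in terms of the subprocesses $T_v$ while the paper works directly with the sets $\cX_n$ of selected renewed vertices and the counts $X^u_{n+1}=|\{v\in\cR_{(n+1)N}:\alpha_N(v)=\{u\}\}|$.
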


We next prove \refT{thm:phases} assuming Propositions~\ref{prop:Z4}--\ref{prop:superGW}.
We then end this section with an outline of the rest of the paper. 

\begin{proof}[Proof of \refT{thm:phases}]
Let $C_2>0$ be as in \refP{prop:Zn}. Take $0<p\le 2/3$. We first consider the extinction case.
Note that for any $m\in \N$,
$$
\p{Z_n>0 \; \forall n\in \N}\le \p{Z_m\ge 1} \le \E{Z_m}
$$
by Markov's inequality.
Hence it suffices to prove that if $q> \frac{2}{5}p(1+C_2 p)$ then $\E{Z_n}\to 0$ as $n\to \infty$. 

By~\eqref{eq:Z4} in \refP{prop:Z4}, and since $\E{Z_0}=1$, for $n\ge 3$,
$$\E{Z_n}\le (1+p-q)^{n-3}\E{Z_3}\le (1+p-q)^{n-3}(1+p)^3.$$
Therefore, if $q>p$ then $\E{Z_n}\to 0$ as $n\to \infty$.

It remains to consider the case $\frac{2}{5}p(1+C_2 p)<q\le p$. Then $p-\frac 52 q +C_2 p^2<0$, and since $q\le p\le 2/3$, we also have $p-\frac 5 2 q \ge -1$; thus $a:=1+p-\frac{5}{2}q+C_2 p^2\in (0,1)$. For $n\ge 6$, by~\eqref{eq:upperZ} in Proposition~\ref{prop:Zn}, and since $1+p-\frac 5 2 q \ge 0$, we have 
\begin{align}\label{eq:upperZx}
    \E{Z_n}\le (1+p-\tfrac{5}{2}q)\max_{1\le k\le 6}\E{Z_{n-k}} 
    +C_2 p^2 \max_{1\le k\le 6} \E{Z_{n-k}}= a\max_{1\le k\le 6} \E{Z_{n-k}}.
\end{align}
Now we prove by induction on $j$ that for $n\ge 6$, for $j\in \{0,\ldots, 5\}$,
\begin{align}\label{eq:maxs}
    \max_{0\le l\le j} \E{Z_{n+l}} \le a \max_{1\le k \le 6} \E{Z_{n-k}}.
\end{align}
The base case for the induction,~\eqref{eq:maxs} with $j=0$, holds by~\eqref{eq:upperZx}. Suppose \eqref{eq:maxs} holds for some $0\le j<5$; then by~\eqref{eq:upperZx},
\begin{align} \label{eq:thmindpf}
\E{Z_{n+j+1}} 
&\le a \max_{1\le k \le 6} \E{Z_{n+j+1-k}} \notag \\
&= a \, \max\left( \max_{0\le l \le j} \E{Z_{n+l}}, \max_{1\le k \le 5-j} \E{Z_{n-k}} \right)\notag \\
&\le a \max_{1\le k \le 6} \E{Z_{n-k}},
\end{align}
where we use both the induction hypothesis and the fact that $a\in (0,1)$ for the last inequality. 
It follows from~\eqref{eq:thmindpf} and the induction hypothesis that 
\begin{align*}
    \max_{0\le l \le j+1} \E{Z_{n+l}} \le a \max_{1\le k \le 6} \E{Z_{n-k}},
\end{align*}
and so by induction,~\eqref{eq:maxs} holds for each $j\in \{0,1,\ldots , 5\}$.

Now take $m\ge 1$; by~\eqref{eq:maxs} with $j=5$ and $n=6m$, and then using an iterative argument we have
\begin{align*}
        \max_{0\le l \le 5} \E{Z_{6m+l}} \le a \max_{1\le k \le 6} \E{Z_{6m-k}}\le \ldots \le a^m \max_{0\le k \le 5} \E{Z_k}\le a^m (1+p)^5,
\end{align*}
where the last inequality follows by~\eqref{eq:Z4} in \refP{prop:Z4} and since $Z_0=1$.
Letting $m\to \infty$ and using that $a\in (0,1)$, we infer that $\E{Z_n}\to 0$ as $n\to \infty$, as desired.

Now, for the case of survival,
let $C' := \max(C_1,C_2)$.
We can take a constant $C>C_2>0$  sufficiently large that there exists $p_3>0$ such that for $0<p<p_3$, if
$0\le q<\frac 25 p(1-C p)$ then
letting $b:=1+p-(\frac{5}{2}q(1+p)^4+C'q^2)(1+C' p)$, we have $b>1$. 
(This can be seen by substituting $q=\frac 25 p(1-C p)$ into the expression for $b$ and showing that the coefficient of $p^2$ is positive if $C$ is a sufficiently large constant.)
From now on, take $0<p<p_0=\min(p_1,p_2,p_3,2/3)$ and $0 \le q<\frac 25 p(1-C p)$.

Using \eqref{eq:lowerZ} from \refP{prop:Zn} with $m=7$, and then~\eqref{eq:Z3} from \refP{prop:Z4}, we have 
$$\E{Z_7}\ge (1+p)\E{Z_6}-(\tfrac 52 q(1+p)^4 +C' q^2)\max_{3\le k \le 5}\E{Z_k}\ge  b \E{Z_6}.$$
Therefore, since $b>1$, in particular,
\begin{align*}
    (1+C'p)\E{Z_7} \ge (1+C'p)\E{Z_6} \ge \max_{1\le k\le 3} \E{Z_{7-k}}
\end{align*}
by~\eqref{eq:Z3} in \refP{prop:Z4}.
By~\eqref{eq:lowerZ} in Proposition~\ref{prop:Zn} and an induction argument, this implies that for $n\ge 7$, 
\begin{align} \label{eq:thmind2}
    \E{Z_{n}}\ge b\E{Z_{n-1}} \quad \text{and}\quad
    (1+C'p)\E{Z_n}\ge \max_{1\le k \le 3}\E{Z_{n-k}}.
\end{align}
Indeed, for $m>7$, if~\eqref{eq:thmind2} holds with $n=m-1$, then by~\eqref{eq:lowerZ},
\begin{align*}
    \E{Z_m}&\ge (1+p)\E{Z_{m-1}}-(\tfrac{5}{2}q(1+p)^4 +C'q^2) (1+C'p)\E{Z_{m-1}}= b\E{Z_{m-1}}.
\end{align*}
Hence in particular, since $b>1$,
$$
(1+C'p)\E{Z_m}\ge (1+C'p)\E{Z_{m-1}}\ge \max( \E{Z_{m-1}},\max_{2\le k \le 4}\E{Z_{m-k}})\ge  \max_{1\le k \le 3}\E{Z_{m-k}},
$$
where we used~\eqref{eq:thmind2} with $n=m-1$ in the second inequality, which completes the induction argument.

Since $b>1$, we now have that $\E{Z_n}$ is increasing for $n\ge 6$. Moreover, by iterating~\eqref{eq:thmind2}, $\E{Z_n}\ge b^{n-6} \E{Z_6}\ge b^{n-6} (1+C_1 p)^{-1}$ by~\eqref{eq:Z3} in \refP{prop:Z4}, and so $\E{Z_n}\to \infty$ as $n\to \infty$. 
By the same argument, since~\eqref{eq:Z3} and~\eqref{eq:lowerZ} hold when $Z_l$ is replaced with $Z'_l$ for each $l$, we have that for $n\ge 6$, $\E{Z'_n}$ is increasing, and $\E{Z'_n}\to \infty$ as $n\to \infty$.

By \refP{prop:ERn}, and since $\E{Z_m}$ is increasing for $m\ge 6$, we have that for $n\ge 12$,
\begin{align*}
\E{R_n}
&\ge 2\eta \E{Z_{n-3}}-\eta \E{Z_{n-3}}
= \eta \E{Z_{n-3}}>1
\end{align*}
for $n$ sufficiently large. By the same argument for $\E{R'_n}$, it follows that there exists $N\ge 3$ such that $\min\{\E{R_N},\E{R'_N}\}>1$. 

Now, by \refP{prop:superGW} with this choice of $N$, we can couple $(Z_n)_{n\geq 0}$ with $(X_n)_{n\geq 0}$ in such a way that 
$Z_{n N} \geq X_{n} \,\, \forall n\in \N$. Since $\min\{\E{R_N},\E{R'_N}\}>1$, the process $(X_n)_{n\ge 1}$ is a supercritical branching process with $\E{X_1}>1$, and so $\p{X_n>0 \,\, \forall n\in \N}>0$.
Hence
\begin{align*}
\p{Z_n>0 \,\, \forall n\in \N}
&= \p{Z_{nN}>0 \,\, \forall n\in \N } \\
&\ge \p{X_n>0 \,\, \forall n\in \N}>0,
\end{align*}
as required.
\end{proof}

\subsection{Overview of Sections \ref{sec:UlamHarris}-\ref{sec:mainproofs}} \label{sec:overview}

It remains to prove Propositions~\ref{prop:Z4}-\ref{prop:superGW}.
First, in Section~\ref{sec:UlamHarris}, we introduce a construction of the graph processes $\cG(p,q)$ and $\cG'(p,q)$ using Ulam-Harris labelling.
We use this construction to prove Propositions~\ref{prop:Z4} and~\ref{prop:superGW}.
In \refS{sec:upper} we prove almost sure upper and lower bounds for $Z_n$ in terms of new random variables which are analysed in the subsequent sections; see \refL{lem:Z}. 
In Sections~\ref{sec:upper}-\ref{sec:lower}, we establish upper and lower bounds on the expectations of these random variables, and finally in Section~\ref{sec:mainproofs} we complete the proofs of Propositions~\ref{prop:Zn} and~\ref{prop:ERn}.

\section{Ulam-Harris construction, proof of Propositions~\ref{prop:Z4} and \ref{prop:superGW}}\label{sec:UlamHarris}
\subsection{Construction of $\cG(p,q)$ and $\cG '(p,q)$ using Ulam-Harris notation}\label{subsec:UlamHarris}

In this subsection, we construct the graph processes $\cG(p,q)$ and $\cG '(p,q)$ (defined in Sections~\ref{subsec:Gpqfirstdef} and~\ref{sec:outline} resp.) using Ulam-Harris notation to give a labelling of the vertices that will be useful in the rest of the proofs.

We let
$$
\mathcal U_n :=\N^n \text{ for } n\in \N_0, \quad \text{and} \quad \mathcal U :=\bigcup_{n= 0}^\infty \mathcal U_n.
$$
For $u=u_1\ldots u_k$ and $v=v_1 \ldots v_l\in \mathcal U$, we write $uv:=u_1\ldots u_k v_1 \ldots  v_l$. 
For $u =u_1\ldots u_n \in \mathcal U_n$, let $|u|:=n$, and for $m<n$, let $u|_m:=u_1 \ldots u_m.$

The individuals in our graph processes will be labelled using $\mathcal U$.
Take $p>-1$ and $q\in [0,1]$.
We now define three independent families of i.i.d.~random variables $(\xi_u)_{u\in \mathcal U}$, $(\delta_{u,v})_{u,v\in \mathcal U}$ and $(\mu_{\{u,v\}})_{u\neq v\in \mathcal U}$, where each $\xi_u$ has Poisson distribution with mean $1+p$, each $\delta_{u,v}$ has Bernoulli distribution with mean $q$, and each $\mu_{\{u,v\}}$ also has Bernoulli distribution with mean $q$.

Before giving the precise construction of the graph process $\mathcal G(p,q)$ in terms of these random variables, we give a brief overview of how we will construct a new generation.
The random variable $\xi_u$ will determine the initial number of offspring of the individual  in the current generation labelled $u$. Each offspring $ui$ with $i\le \xi_u$ may then be deleted by an individual $v$ at graph distance exactly three from $u$; the deletion occurs if $\delta_{v,ui}=1$. Finally, surviving offspring $u'$ and $v'$ in the new generation that are at graph distance 4 from each other merge into one individual if $\mu_{\{u',v'\}}=1$.

We now give the precise construction.
We construct the process $\cG(p,q)=(G_n)_{n\ge 0}$ in such a way that $G_n=(V_n,E_n)$ is a graph for each $n\in \N_0$, with $V_n \subset \cup_{m\le n}\mathcal U_m$ and $E_n \subset \cup_{1\le m \le n}(\mathcal U_m \times \mathcal U_{m-1})$.
The process starts with $G_0:=(V_0,E_0)$, where $V_0:=\{\emptyset\}$ and $E_0:=\emptyset$.
We let $I_m :=\mathcal U_m \cap V_m$ for each $m\in \N_0$.

The process is constructed iteratively; for each $n\ge 1$, after constructing $G_{n-1}$, we construct $G_n$ as follows.
\begin{enumerate}
    \item For each $u\in I_{n-1}$, let $\cK_u:=\{v\in V_{n-1}:d_{G_{n-1}}(u,v)=3\}$, and let $\mathcal C_u := \{j\le \xi_u : \delta_{v,uj}=0 \; \forall v\in \cK_u\}$.
    Then let $\tilde G_n :=(\tilde V_n , \tilde E_n),$ where
    \begin{align*}
        \tilde V_n &:= V_{n-1} \cup \{ui: u\in I_{n-1}, i \in \mathcal C_u\}\\
        \text{and }\quad \tilde E_n &:= E_{n-1} \cup \{ \{u, ui\}:u\in I_{n-1}, i \in \mathcal C_u\}.
    \end{align*}
    \item Let $\tilde I_n := \tilde V_n \cap \mathcal U_n$.
    For $u,v \in \tilde I_n$, write $u\stackrel{m}{\sim} v$ if and only if $d_{\tilde G_n}(u,v)=4$ and $\mu_{\{u,v\}}=1$.
    Then define an equivalence relation on $\tilde I_n$ by letting $u\sim v$ (for $u,v \in \tilde I_n$) if and only if there exist $k\ge 0$ and $u=u_0, u_1, \ldots, u_k=v \in \tilde I_n$ such that $u_i \stackrel{m}{\sim} u_{i+1}$ for each $0\le i \le k-1$.
    For $u\in \tilde I_n$, let
    $$
    \pi(u):=\min\{v\in \tilde I_{n}: v\sim  u\},
    $$
    where the minimum is with respect to the lexicographical ordering of $\mathcal U$.
    Then we let
    \begin{align*}
    V_n &:= V_{n-1} \cup \{\pi(u): u\in \tilde I_n\}\\
    \text{and } \quad E_n &:= E_{n-1} \cup \{\{v, \pi(vi)\}: v\in I_{n-1}, i \in \mathcal C_v\}.
    \end{align*}
\end{enumerate}
This completes the construction of $G_n$ from $G_{n-1}$.

Define another independent family of i.i.d.~random variables $(\delta_{u})_{u\in \mathcal U}$, where each $\delta_u$ has a Bernoulli distribution with mean $q$.
The construction of $\cG'(p,q)=(G'_n)_{n\ge 0}=(V'_n,E'_n)_{n\ge 0}$ is identical to the construction of $\cG(p,q)$ except that, for $n\le 3$, when constructing $G'_{n}$ from $G'_{n-1}$ we replace the set $\cC_u$ with $\mathcal C'_u := \{j\le \xi_u : \delta_{uj}=0, \delta_{v,uj}=0 \; \forall v\in \cK '_u\}$,
where $\cK '_u: =\{v\in V'_{n-1}:d_{G'_{n-1}}(u,v)=3\}$. 
For $n\ge 4$, the construction of $G'_{n}$ from $G'_{n-1}$ is identical to the construction for the process $\cG(p,q)$.
We write $I'_n :=V'_n \cap \mathcal U_n$ for $n\ge 0$.

\subsection{Notation}
In this subsection, we introduce some notation that will be used throughout Sections~\ref{sec:UlamHarris}-\ref{sec:mainproofs}.
For $n\ge 0$, define the $\sigma$-algebras
\begin{align}
\mathcal F_n &:= \sigma ( (\xi_u)_{u\in \cup_{m\le n-1}\mathcal U_{m}}, (\delta_{u,v})_{u,v\in \cup_{m\le n}\mathcal U_{m}} (\mu_{\{u,v\}})_{u\neq v\in \cup_{m\le n}\mathcal U_{m}} ), \label{dfn:Fn}\\
\text{and }\; \widetilde{\mathcal F}_n &:= \sigma ( (\xi_u)_{u\in \cup_{m\le n-1}\mathcal U_{m}}, (\delta_{u,v})_{u, v\in \cup_{m\le n}\mathcal U_{m}}, (\mu_{\{u,v\}})_{u\neq v\in \cup_{m\le n-1}\mathcal U_{m}} ). \label{dfn:F'n}
\end{align}
Note from our construction in Section~\ref{subsec:UlamHarris} that $G_n$ (respectively, $\tilde G_n$) is $\mathcal F_n$-measurable (respectively, $\widetilde{\cF}_n$-measurable) for each $n\ge 1$.

For $n\ge 0$ and $u\in I_n$, as in Section~\ref{subsec:Gpqfirstdef} we write 
\begin{equation} \label{eq:kudefn}
    k_u :=|\cK_u|=|\{v\in V_n :d_{G_n}(u,v)=3\}|,
\end{equation}
so that $\E{|\cC_u| |\cF_n}=(1+p)(1-q)^{k_u}$.
For $n\ge 0$ and $u\in I_n$, we write $\alpha(u)$ for the set of ancestors of $u$ (including $u$ itself) in $G_n$, i.e.~we let
$$
\alpha(u):= \bigcup_{0\le k \le n}\{v \in I_{n-k}: d_{G_n}(v,u)=k\}.
$$
Then for $m\le n $ and $u\in I_n$, we write
$$
\alpha_m(u):= \alpha(u) \cap I_{n-m},
$$
so that $\alpha_1(u)$ denotes the set of parents of individual $u$, $\alpha_2(u)$ is the set of grandparents, $\alpha_3(u)$ is the set of great-grandparents, and so on. 
(For $m>n$, we let $\alpha_m(u)=\emptyset$.)
Similarly, for $u \in \tilde I_n$, we write $\tilde \alpha(u)$ for the set of ancestors in $\tilde G_n$ of $u$:
$$
\tilde \alpha(u):= \{u\}\cup \bigcup_{1\le k \le n}\{v \in I_{n-k}: d_{\tilde G_n}(v,u)=k\},
$$
and for $1\le m \le n$, we write
\begin{equation} \label{eq:tildealphadefn}
\tilde \alpha_m(u) :=\tilde \alpha(u) \cap I_{n-m}= \alpha_{m-1}(u|_{n-1}) .
\end{equation}
For $m,n\ge 0$, for  a vertex $u\in I_m$, let $I_n^u\subseteq I_{m+n}$ denote the set of vertices which are descendants of $u$ in generation $m+n$, i.e.
\begin{equation} \label{eq:Inudefn}
I_n^u :=\{ v\in I_{m+n} : u\in \alpha (v)\}.
\end{equation}
Note that for $u\in I_m$, by our construction in Section~\ref{subsec:UlamHarris} we have
\begin{equation} \label{eq:offspringbound}
    |I_1^u|=|\{\pi(ui):i\in \cC_u\}|\le |\cC_u|\le \xi_u.
\end{equation}
For $m,n\ge 0$ and $u\in I_m$, let
\begin{align} \label{eq:tildeIudefn}
\tilde{I}_{n+1}^u:=&\{wi :\, w\in I_n^u, \, wi\in \tilde{I}_{m+n+1}\},
\end{align}
the set of offspring in $\tilde I_{m+n+1}$ of individuals in $I^u_n$.
Note that by our construction in Section~\ref{subsec:UlamHarris},
$I^u_{n+1}=\{\pi(wi):wi\in \tilde I^u_{n+1}\}$ and so
\begin{equation} \label{eq:ItildeIbd}
    |I^u_{n+1}|\le |\tilde I^u_{n+1}|.
\end{equation}
For $n\ge 0$ and $v\in I_n$, write $\sigma(v)$ for the equivalence class in $\tilde I_n$ that merges to form $v$, i.e.
\begin{equation} \label{eq:sigmavdefn}
\sigma(v):=\{\tilde v \in \tilde I_n : \tilde v \sim v \}.
\end{equation}
For $n\ge 1$, and for distinct vertices $u,v\in I_n$, we write $u\sibl v$ if these vertices share a common parent in $I_{n-1}$ (that is, $u$ and $v$ are siblings).
More precisely,
\begin{equation} \label{eq:sibldefn}
u\sibl v \quad \text{if and only if} \quad d_{G_n}(u,v)=2.
\end{equation}
For $1\le k \le n$, write 
\begin{equation} \label{eq:Jnkdefn}
J_n^{(k)}:= \{u\in I_n:|\alpha_k(u)|=1\}
\quad \text{and}\quad
\tilde J_n^{(k)}:= \{u\in \tilde I_n:|\tilde \alpha_k(u)|=1\}.
\end{equation}
(For example, $J^{(1)}_n$ is the set of individuals in $I_{n}$ which have exactly one parent.)

\subsection{Proof of \refP{prop:Z4}}

\begin{proof}[Proof of  \refP{prop:Z4}]
We begin by proving~\eqref{eq:Z4}.
For $n\ge 1$ and $u\in I_{n-1}$, by our construction in Section~\ref{subsec:UlamHarris} and by~\eqref{eq:kudefn} we have 
$\E{|\cC_u||\cF_{n-1}}=(1+p)(1-q)^{k_u}.$ For $n\ge 4$, 
for each $u\in I_{n-1}$ we have $\alpha_3(u)\neq \emptyset$, and so we must have
$k_u\ge 1$. For $n\ge 1$, since $Z_n =|I_n|\le |\tilde I_n|$ and $(1+p)(1-q)\le 1+p-q$, it follows that 
\begin{align*}
    \E{Z_n}\le \E{|\tilde I_n|} =\E{\sum_{u\in I_{n-1}} |\cC_u|}\le (1+p-q\I{n\ge 4})\E{Z_{n-1}}, 
\end{align*}
which establishes~\eqref{eq:Z4}. 
By our construction of $\cG'(p,q)$ in Section~\ref{subsec:UlamHarris}, for $n\ge 1$ and $u\in I'_{n-1}$ we have $\E{|\cC'_u||\cF_{n-1}}= (1+p)(1-q)^{|\mathcal K'_u|+\I{n\le 3}}$, and $|\mathcal K'_u|\ge 1$ for $n\ge 4$. Hence for $n\ge 1$,
\begin{align*}
    \E{Z'_n}\le \E{\sum_{u\in I'_{n-1}} |\cC '_u|}\le (1+p)(1-q)\E{Z'_{n-1}}\le (1+p-q)\E{Z'_{n-1}}; 
\end{align*}
in particular, \eqref{eq:Z4} holds when $Z_m$ is replaced by $Z'_m$ for each $m$.

It remains to prove~\eqref{eq:Z3}; suppose $0\le q \le p\le 1$.
Define a random subset $I^* \subset \mathcal U$ by letting
$$
I^* = \{\emptyset\} \cup \{u =u_1 \ldots u_k \in \mathcal U: 1\le k\le 6, \, u_i \le \xi_{u|_{i-1}} \forall 1\le i\le k\}.
$$
Note that $I^*$ contains all the possible labels in $I_0, \ldots, I_6$ and $I'_0, \ldots, I'_6$.
For $0\le n \le 6$, let $W_n =|I^* \cap \mathcal U_n|$. Then
by our construction in Section~\ref{subsec:UlamHarris}, $Z_n \le W_n$ and $Z'_n \le W_n$ for $n=0,1,\ldots, 6$.
Define the event
\begin{align*}
    A = \{\delta_{u,v}=0 \; \forall u,v \in I^*, \, \mu_{\{u,v\}}=0 \; \forall u\neq v \in I^*, \, \delta_u =0 \; \forall u \in I^*
    \}.
\end{align*}
Then on the event $A$, no vertices are deleted or merged in the first six generations of the construction of $\mathcal G(p,q)$ and $\mathcal G'(p,q)$, and so in particular,
$Z_i =Z'_i = W_i$ for $i=0,1,\ldots, 6$. We now have that for $0\le n \le 6,$
\begin{equation} \label{eq:Z123bd}
\E{W_n \I{A}}\le \E{Z_n}\le \E{W_n} \quad \text{and}\quad \E{W_n \I{A}}\le \E{Z'_n}\le \E{W_n}.
\end{equation}
Since $W_n$ is the number of individuals in the $n$th generation of a Galton-Watson process with offspring distribution given by a Poisson distribution with mean $1+p$, we have $\E{W_n}=(1+p)^n$ for $0\le n \le 6$.
Moreover, by conditioning on $(\xi_u)_{u\in \mathcal U}$, and then since $|I^*|=\sum_{k=0}^6 W_k$,
\begin{align} \label{eq:propZ3star}
    \E{W_n \I{A^c}} \le \E{W_n \cdot (2 |I^*|^2+ |I^*| ) q}\le 3q \E{W_n \Big( \sum_{k=0}^6 W_k \Big)^2}.
\end{align}
By the Cauchy-Schwarz inequality, and then by Jensen's inequality and Cauchy-Schwarz again,
for $0\le k_1,k_2 \le 6$,
\begin{align} \label{eq:Wmixedmoments}
    \E{W_n W_{k_1} W_{k_2}}\le \E{W_n^2}^{1/2}\E{W_{k_1}^2W^2_{k_2}}^{1/2}
    &\le \E{W_n^4}^{1/4}\E{W_{k_1}^4}^{1/4}\E{W_{k_2}^4}^{1/4} \notag \\
    &\le \Big(\max_{0\le k \le 6}\E{W_k^4}\Big)^{3/4}.
\end{align}
Let $(\zeta_i)_{i\ge 1}$ be i.i.d.~Poisson random variables with mean $2$.
For $1\le k \le 6$, since $p\le 1$,
$$
\E{W_k^4}\le \E{\Big( \sum_{i=1}^{W_{k-1}}\zeta_i\Big)^4}=\E{\sum_{1\le i_1,i_2,i_3,i_4\le W_{k-1}}\E{\zeta_{i_1}\zeta_{i_2}\zeta_{i_3}\zeta_{i_4}}}\le \E{\zeta_1^4}\E{W_{k-1}^4},
$$
where the last inequality follows since by Jensen's inequality, $\E{\zeta_i^r}\le \E{\zeta_i^4}^{r/4}$ for $1\le r \le 3$.
Therefore, since $W_0=1$, there exists a constant $C'<\infty$ such that 
$\max_{0\le k \le 6}\E{W_k^4}\le C'$.
It follows from~\eqref{eq:propZ3star} and~\eqref{eq:Wmixedmoments} that there exists a constant $C''<\infty$ such that 
\begin{align*}
    \E{W_n \I{A^c}} \le  C'' q  \quad \forall\; 0\le n \le 6.
\end{align*}
Hence since $q\le p$, by~\eqref{eq:Z123bd} we now have that for $0\le n \le 6,$
\begin{equation*} 
(1+p)^n- C'' p \le \E{Z_n}\le (1+p)^n \quad \text{and}\quad (1+p)^n- C'' p\le \E{Z'_n}\le (1+p)^n,
\end{equation*}
and the result follows by taking $C_1>0$ sufficiently large.
\end{proof}

\subsection{Proof of \refP{prop:superGW}}

Take $k,n\ge 0$ and $u\in I_k$. Recall the definitions of $I_n^u$ and $\tilde I_{n+1}^u$ in~\eqref{eq:Inudefn} and~\eqref{eq:tildeIudefn}. 
Let $V_n^u:=\cup_{m\le n} I_m^u$ and let 
\begin{align*}
    E_n^u:=\cup_{1\le m\le n} \{\{v,w\}:\, v\in \alpha_1(w), v\in I_{m-1}^u \}=E_{n+k}\cap (V^u_n \times V^u_n);
\end{align*}
define the graph $G_n^u:=(V_n^u, E_n^u)$ and the graph process $\cG^u:=(G_n^u)_{n\ge 0}$. For $n\ge 0$, let 
\begin{align*}
    \cU_n^u:=\{uv:\, v\in \cU_n\}.
\end{align*}
Recall the definition of a renewed vertex from Section~\ref{sec:outline}.
For $k\ge 3$, we write $\cR_k$ for the set of renewed vertices in generation $k$, i.e. 
\begin{equation} \label{eq:Rkdefn}
    \cR_k:=\{v\in I_k: |\{w\in V_k: 1\le d_{G_k}(v,w)\le 3\}|=3 \}.
\end{equation}
Note that since $|\alpha_i(v)|\ge 1$ for each $v\in I_k$ and $i\le k$, we have that if $v\in \cR_k$ then $\{w\in V_k: 1\le d_{G_k}(v,w)\le 3\}=\cup_{i=1}^3 \alpha_i(v)$.
The next lemma says that the set of descendants in generation $n+k$ of a renewed vertex $u$ in generation $k$ is contained in $\cU^u_n$. 
Recall the definition of $\sigma(v)$ in~\eqref{eq:sigmavdefn}.
\begin{lem}\label{lem:Iu}
Let $k\ge 3$ and $u\in \cR_k$. The following holds for $n\ge 1$, for all $v\in I_n^u$:
\begin{enumerate}
    \item[\it i)] $\sigma(v)\subseteq \tilde{I}_{n}^u$,
    \item[\it ii)] $\alpha_i(v)\subseteq I_{n-i}^u$ for each $0\le i\le n$.
\end{enumerate}
Moreover,  $I_n^u\subseteq \cU_n^u$ for each $n\ge 0$,
and 
for $m\ge k$, $v\in V^u_{m-k}$ and $v' \in V_m \setminus V_{m-k}^u$,
any path in $G_m$ from $v$ to $v'$ passes through $u$.
\end{lem}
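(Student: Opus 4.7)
The plan is to prove (i) and (ii) simultaneously by strong induction on $n\ge 1$, deduce $I_n^u\subseteq \cU_n^u$ as a direct corollary of (i), and finally handle the path statement by analysing transitions out of $V_{m-k}^u$. A useful preliminary observation is that in $\tilde G_{n+k}$ each vertex of $\tilde I_{n+k}$ has a unique parent (mergers have not yet been applied), so if $w,w'\in\tilde I_{n+k}$ satisfy $d_{\tilde G_{n+k}}(w,w')=4$, then their unique parents in $I_{n+k-1}$ share a common parent in $I_{n+k-2}$.

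For the base case $n=1$, let $v\in I_1^u$, so $v=\pi(ui)$ for some $i\in\cC_u$ and $ui\in\sigma(v)\cap\tilde I_1^u$. I propagate along the $\stackrel{\mathrm{m}}{\sim}$-chain defining $\sigma(v)$: if $w_L=uj\in\tilde I_1^u$ and $w_L\stackrel{\mathrm{m}}{\sim}w_{L+1}$, then $w_{L+1}$'s parent in $I_k$ is a child of some parent of $u$. Since $u\in\cR_k$, the set of vertices in $V_k$ within graph distance $3$ of $u$ is exactly $\alpha_1(u)\cup\alpha_2(u)\cup\alpha_3(u)$, of total size $3$, which forces $|\alpha_1(u)|=1$ (say $\{g\}$) and also forces $g$'s only child in $I_k$ to be $u$ itself (otherwise a sibling would supply a fourth vertex within distance $3$). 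Hence $w_{L+1}$'s parent must be $u$, giving $w_{L+1}\in\tilde I_1^u$ and so $\sigma(v)\subseteq\tilde I_1^u$. For (ii), every element of $\sigma(v)$ has parent $u$ in $\tilde G_{k+1}$, so $\alpha_1(v)=\{u\}$.

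For the inductive step at $n\ge 2$, assume (i) and (ii) hold for all $n'<n$. Given $v\in I_n^u$, pick a parent $p\in\alpha_1(v)\cap I_{n-1}^u$; by the induction hypothesis, $p\in\cU_{n-1}^u$, so $pi\in\sigma(v)\cap\tilde I_n^u$ for the index $i$ with $\pi(pi)=v$. Along any $\stackrel{\mathrm{m}}{\sim}$-chain extending from $pi$, suppose $w_L\in\tilde I_n^u$ has unique parent $p_L\in I_{n-1}^u$ and $w_L\stackrel{\mathrm{m}}{\sim}w_{L+1}$. Then $p_L$ and $w_{L+1}$'s parent $p_{L+1}$ share a common parent $g\in I_{n+k-2}$ by the preliminary observation; applying the induction hypothesis (ii) to $p_L$ gives $g\in\alpha_1(p_L)\subseteq I_{n-2}^u$, so $p_{L+1}\in I_{n-1}^u$ and thus $w_{L+1}\in\tilde I_n^u$. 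This establishes (i) at $n$. For (ii), every parent of $v$ is the $\tilde G_{n+k}$-parent of some element of $\sigma(v)\subseteq\tilde I_n^u$, so $\alpha_1(v)\subseteq I_{n-1}^u$; iterating via the induction hypothesis at $n-1$ yields $\alpha_i(v)\subseteq I_{n-i}^u$ for all $0\le i\le n$.

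The containment $I_n^u\subseteq\cU_n^u$ is then immediate since $v\in\sigma(v)\subseteq\tilde I_n^u\subseteq\cU_n^u$. For the path claim, given a path $v=v_0,\ldots,v_L=v'$ in $G_m$, pick the first index $j$ with $v_j\in V_{m-k}^u$ and $v_{j+1}\notin V_{m-k}^u$, and write $v_j\in I_i^u$. Since edges connect consecutive generations, $v_{j+1}$ is either a child of $v_j$ (only possible for $i<m-k$, in which case $v_{j+1}\in I_{i+1}^u\subseteq V_{m-k}^u$, a contradiction) or a parent of $v_j$ (if $i\ge 1$, then by (ii), $v_{j+1}\in\alpha_1(v_j)\subseteq I_{i-1}^u\subseteq V_{m-k}^u$, again a contradiction). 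The only remaining case is $i=0$, i.e.\ $v_j=u$, so the path passes through $u$. The main technical hurdle is the cousin-chain step in the inductive argument: one must use precisely that $\tilde I_{n+k}$-vertices have unique parents in $\tilde G_{n+k}$, so that length-$4$ cousin paths genuinely route through a common grandparent in $I_{n+k-2}$, together with the induction hypothesis that the parent set of a descendant of $u$ stays inside the descendant tree; the renewed condition on $u$ is used only in the base case to kill siblings and extra grandparents.
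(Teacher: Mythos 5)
Your proof is correct and follows essentially the same route as the paper's: simultaneous induction on $n$ for (i) and (ii), with the renewed property killing cousin-mergers in the base case, the common-grandparent structure of $\stackrel{\mathrm{m}}{\sim}$ combined with the induction hypothesis handling the chain in the inductive step, and the remaining claims deduced from (ii). The only cosmetic difference is that you propagate membership in $\tilde I_n^u$ directly along the $\stackrel{\mathrm{m}}{\sim}$-chain, whereas the paper argues by contradiction at the first step where a chain would exit $\tilde I_n^u$; these are logically equivalent.
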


\begin{proof}
First observe that $I_0^u=\{u\}$. We will prove {\it i)} and {\it ii)} by induction on $n\ge 1$. Let $v\in I_1^u$. We have that $u\in \alpha_1(v)$ and so, by our construction in Section~\ref{subsec:UlamHarris}, $ui\in \sigma(v)$ for some $i\ge 1$. Suppose that $wj\stackrel{m}{\sim} ui$ for some $w\in I_k$ and $j\ge 1$; then $d_{G_k}(w,u)=2$, which contradicts the assumption that $u\in \cR_k$. It follows that $\sigma(v)=\{ui\}$, and so $v=ui$ and $\alpha_1(v)=u$, which implies conditions {\it i)} and {\it ii)} hold for $n=1$. 

We next prove the induction step. Let $m>1$ and suppose conditions {\it i)} and {\it ii)} hold for $n= m-1$. Then suppose, aiming for a contradiction, that condition {\it i)} is not satisfied for $n=m$. Take $v\in I_m^u$ such that there exists $w_-j_-\in \sigma(v)$ with $w_-\in I_{k+m-1}\setminus I_{m-1}^u$ and $j_-\in \N$. Since $v\in I_m^u$, there also exists $w_+j_+\in \sigma(v)$ with $w_+\in I_{m-1}^u$ and $j_+\in \N$. 
Then by the definition of $\sigma(v)$, there exist $s\ge 1$ and $w_+j_+=v_0, v_1,\ldots, v_s=w_-j_-\in \tilde{I}_{k+m}$ such that $v_\ell \stackrel{m}{\sim} v_{\ell+1}$ for each $\ell=0,1,\ldots, s-1$. In particular, there exists $0\le \ell<s$ such that $v_\ell=wj$, $v_{\ell+1}=w' j'$ with $j,j'\in \N$, $w \in I_{m-1}^u$ and $w' \in I_{k+m-1}\setminus I_{m-1}^u$. Using that $v_\ell \stackrel{m}{\sim} v_{\ell+1}$, we infer that $d_{G_{k+m-1}}(w,w')=2$ and so $\alpha_1(w)\cap \alpha_1(w')\neq \emptyset$. By the induction hypothesis, $\alpha_1(w)\subseteq I^u_{m-2}$, and so there exists $z\in I^u_{m-2}\cap \alpha_1(w')$. This contradicts the assumption that $w'\notin I_{m-1}^u$. Therefore, for all $v\in I_m^u$ we must have $\sigma(v)\subseteq \tilde{I}_{m}^u$. By our construction in Section~\ref{subsec:UlamHarris}, it follows that $\alpha_1(v)\subseteq I_{m-1}^u$; by our assumption that condition {\it ii)} holds for $n= m-1$, it then follows that
$\alpha_i(v)\subseteq I_{m-i}^u$ for each $0\le i\le m$.
This completes the proof that conditions {\it i)} and {\it ii)} hold for $n= m$,
and so completes the induction argument.

Finally, by the definition of $\tilde I^u_{n+1}$ in~\eqref{eq:tildeIudefn}, it is straightforward to see that if $I^u_n\subseteq \cU_n^u$ then $\tilde I^u_{n+1}\subseteq \cU_{n+1}^u$.
Hence, by induction, condition {\it i)} implies that $I_n^u\subseteq \cU_n^u$ for each $n\ge 0$. 
Moreover, for $m\ge k$, by condition {\it ii)} with $i=1$, 
and since for $n\ge 1$, $v\in I^u_n$ if $\alpha_1(v)\cap I^u_{n-1}\neq \emptyset$,
there is no edge in $G_m$ between a vertex in $V^u_{m-k}\setminus \{u\}$ and a vertex in $V_m \setminus V^u_{m-k}$. The last claim of the lemma follows.
\end{proof}

Let $u\in \cR_m$ for some $m\ge 3$. By \refL{lem:Iu}, for any $n\ge 1$, removing vertex $u$ from $G_{m+n}$ disconnects the graph $G^u_n$ from the rest of the graph; therefore, we may refer to $\cG^u$ as a graph process that \emph{restarts} at $u$. The next lemma specifies, conditional on $\cF_m$, the distribution of $\cG^u$.

\begin{lem}\label{lem:distGu}
For $m\ge 3$, conditional on $\cF_m$, the graph processes $(\cG^u)_{u\in \cR_m}$ (ignoring the Ulam-Harris labels of vertices) form a collection of i.i.d. processes with $\cG^u\stackrel{d}{=} \cG'(p,q)$ for each $u\in \mathcal R_m$.
\end{lem}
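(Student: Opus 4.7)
The plan is to construct, for each $u \in \cR_m$, an explicit coupling between $\cG^u$ and an instance of $\cG'(p,q)$ using the relabeling $\phi_u:\cU^u\to\cU$, $\phi_u(uv)=v$. The argument has three components: (i) identify exactly which random variables in the driving families $(\xi_w)$, $(\mu_{\{w,w'\}})$, $(\delta_{v,w})$ enter the construction of $\cG^u$; (ii) verify that this subfamily is independent of $\cF_m$ and that the subfamilies for distinct $u\in\cR_m$ are disjoint; and (iii) show that under $\phi_u$, the subfamily drives a copy of $\cG'(p,q)$.

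The heart of step (i) is a decomposition of $\cK_w$ for $w\in I^u_{n-1}$, established by induction on $n$ using \refL{lem:Iu} part \emph{ii)}. Writing $u_{-j}$ for the unique ancestor of $u$ in $I_{m-j}$ (well-defined by renewal, for $j=1,2,3$), I would show that $\cK_w = \cK_w^{\mathrm{int}}$ if $n\ge 4$, and $\cK_w = \cK_w^{\mathrm{int}}\cup\{u_{-(4-n)}\}$ if $n\in\{1,2,3\}$, with $\cK_w^{\mathrm{int}}\subseteq V^u_{n-1}$. For $n\ge 4$, the cut-vertex property (last part of \refL{lem:Iu}) combined with the bipartite structure of the graph (edges only between consecutive generations) places every vertex of $V_{m+n-1}\setminus V^u_{n-1}$ at distance at least $d(w,u)+1 \ge n \ge 4$ from $w$, ruling out external contributions. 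For $n\le 3$, renewal pins down the unique external contribution $u_{-(4-n)}$ and excludes other candidates (for instance, siblings of $u_{-j}$, which do not exist for $j=1,2$). Every index that appears in the resulting expression for $\cG^u$ has depth at least $m$ (for $\xi_u$) or at least $m+1$ (for second indices of $\delta$ and $\mu$), so this subfamily is independent of $\cF_m$ by~\eqref{dfn:Fn}.

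For step (iii), I would repackage the subfamily as $\xi'_v := \xi_{uv}$, $\mu'_{\{v,v'\}} := \mu_{\{uv,uv'\}}$, $\delta'_{v,v'} := \delta_{uv,uv'}$, and $\delta'_v := \delta_{u_{-(4-|v|)},\,uv}$ for $1\le|v|\le 3$, completing the family with independent Bernoulli$(q)$ variables where needed. The original $(\xi),(\mu),(\delta)$ families are i.i.d.\ and the indices listed above are all distinct, so the repackaged family is i.i.d.\ with the correct marginals and hence drives a copy of $\cG'(p,q)$. The $\cK_w$-decomposition then gives, generation by generation, $\phi_u(\cC_w) = \cC'_{\phi_u(w)}$: internal deletions match via $\delta'_{\cdot,\cdot}$, and the single external deletion at $n\le 3$ matches via $\delta'_{\cdot}$. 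For mergers, (a) the cut-vertex property implies that shortest paths in $\tilde G_{m+n}$ between any two $w_1,w_2\in\tilde I^u_n$ stay inside $V^u_{n-1}\cup\tilde I^u_n$, so $\mu_{\{w_1,w_2\}}$ reproduces the merger rule of $\cG'(p,q)$ under $\phi_u$; and (b) no merger between $\tilde I^u_n$ and $\tilde I_{m+n}\setminus\tilde I^u_n$ can occur, because counting up/down steps in the bipartite graph together with the renewal of $u$ rules out paths of length $n-1$ or $n+1$ from $u$ to the parent of any such external vertex, forcing the pairwise distance $d_{\tilde G_{m+n}}(w_1,w_2) \ge 2n+4 \ge 6 > 4$.

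For step (ii)'s independence across $u\in\cR_m$: \refL{lem:Iu} part \emph{ii)} gives $V^u\cap V^{u'}=\emptyset$, and renewal of both $u$ and $u'$ forces $u_{-j}\ne u'_{-j'}$ for $j,j'\in\{1,2,3\}$ (otherwise, using the uniqueness of each step in the ancestor chains of renewed vertices, running forward would give $u=u'$). Together with $\cU^u\cap\cU^{u'}=\emptyset$, the index sets of the driving subfamilies for different $u$'s are disjoint, so conditional on $\cF_m$ the processes $(\cG^u)_{u\in\cR_m}$ are mutually independent, each distributed as $\cG'(p,q)$. The main obstacle is the combinatorial case analysis behind the $\cK_w$-decomposition for $n\in\{1,2,3\}$ and the no-cross-merger distance estimate, both of which squeeze everything out of the renewal assumption; after these are in place, the rest is generation-by-generation bookkeeping via $\phi_u$.
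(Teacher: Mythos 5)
Your proposal is correct and follows essentially the same route as the paper's proof: identify the driving subfamily $(\xi_{uv})$, $(\delta_{uv,uv'})$, $(\mu_{\{uv,uv'\}})$, $(\delta_{u_i,uv})$, check it is independent of $\cF_m$ and disjoint across distinct renewed vertices, decompose $\cK_w$ into an internal part plus the single external ancestor for the first three generations, rule out cross-mergers via the renewal/cut-vertex structure of \refL{lem:Iu}, and match the result to the construction of $\cG'(p,q)$ under the relabelling. The only cosmetic difference is that you exclude external mergers by a distance count while the paper argues via shared parents and \refL{lem:Iu}\emph{ii)}; both are valid.
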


\begin{proof}
For $u\in \cR_m$, we write $u_i$, $0\le i\le 3$, for the unique ancestor of $u$ in $I_{m-i}$; that is, $\alpha_i(u)=\{u_i\}$.
Recall the definition of $\cF_m$ in~\eqref{dfn:Fn}.
We claim that for any $u\in \cR_m$, the graph process $\cG^u$ can be constructed using the following collection of random variables (which are independent of $\cF_m$):
\begin{equation} \label{coll_rvs}
    (\xi_{uv})_{v\in \cU}, (\delta_{uv,uv'})_{v,v'\in \cU \setminus \{\emptyset\}}, (\mu_{\{uv,uv'\}})_{v\neq v'\in \cU\setminus\{\emptyset\}} \text{ and } (\delta_{u_i,uv})_{v\in \cU\setminus\{\emptyset\},i\in \{0,1,2,3\}}.
\end{equation}
Note that conditional on $\cF_m$, for distinct vertices in $\cR_m$, the collections of random variables corresponding to these vertices are independent. 
Hence it follows from the claim that conditional on $\cF_m$, the graph processes $(\cG^u)_{u\in \cR_m}$ are independent.

To see that the claim holds, suppose that $G^u_n$ has been constructed for some $n\ge 0$.
(Note that $G^u_0=(\{u\},\emptyset)$.)
By Lemma~\ref{lem:Iu}, we have $I^u_n \subseteq \cU^u_n$.
Moreover, for $v\in I^u_n$, if $d_{G_{m+n}}(v,w)=3$ for some $w\in V_{m+n}\setminus (V^u_n \setminus\{u\})$ then by Lemma~\ref{lem:Iu},
and by the definition of $\cR_m$,
we must have $n\le 3$ and $w=u_{3-n}$.
If instead $d_{G_{m+n}}(v,w)=3$ for some $w\in V_n^u
\setminus \{u\}$, then since removing the vertex $u$ from $G_{m+n}$ disconnects the graph $G^u_n$ from the rest of the graph, we must have $d_{G^u_n}(v,w)=3$.
Hence
$$
\cK_v =\begin{cases} \{w\in V^u_n \setminus \{u\} :d_{G^u_n}(v,w)=3\}\cup \{u_{3-n}\} \quad &\text{if }n\le 3\\
\{w\in V^u_n \setminus \{u\} :d_{G^u_n}(v,w)=3\} \quad &\text{otherwise,}
\end{cases}
$$
and $\cC_v=\{j\le \xi_v : \delta_{w,vj}=0 \; \forall w\in \cK_v\}$ can be constructed using the collection of random variables in~\eqref{coll_rvs}.

Let $\tilde G^u_{n+1}:=(\tilde V^u_{n+1},\tilde E^u_{n+1})$, where $\tilde V^u_{n+1}:=V^u_n \cup \{vi:v\in I^u_n, i \in \cC_v\}$ and $$\tilde E^u_{n+1}:=E^u_n \cup \{\{v,vi\}:v\in I^u_n, i\in \mathcal C_v\}.$$
Take $vi \in \tilde I^u_{n+1}$ with $v\in I^u_n$ and $i\in \N$, and $v'i'\in \tilde I_{m+n+1}$ with
$v'\in  I_{m+n}$ and $i'\in \N$ such that
 $d_{\tilde{G}_{m+n+1}}(vi,v'i')=4$.
 Then $d_{G_{m+n}}(v,v')=2$, and so by the definition of $\cR_m$ we must have $n\ge 1$. Moreover,
 by {\it{ii)}} in Lemma~\ref{lem:Iu} we must have  $v'\in  I^u_{n}$ and $d_{\tilde G^u_{n+1}}(vi,v'i')=4$. 
 Therefore, for $vi\in \tilde I^u_{n+1}$ and $v'i' \in \tilde I_{m+n+1}$ we have that $vi \stackrel{m}{\sim}v'i'$ if and only if $v'i'\in \tilde I^u_{n+1}$, $d_{\tilde G^u_{n+1}}(vi,v'i')=4$ and $\mu_{\{vi,v'i'\}}=1$.
 Hence $G^u_{n+1}$ can be constructed from $G^u_n$ using the collection of random variables in~\eqref{coll_rvs}.
This completes the proof of the claim.

From the description of the construction above, and the construction of $\cG'(p,q)$ in Section~\ref{subsec:UlamHarris}, it is not difficult to verify that conditional on $\cF_m$, $\cG^u\stackrel{d}{=} \cG'(p,q)$.
\end{proof}

\begin{proof}[Proof of \refP{prop:superGW}]
Fix $N\ge 3$; then define a sequence $(\cX_n)_{n\ge 1}$ of sets of renewed vertices iteratively as follows.
Let $\cX_1:=\cR_N$; for $n\ge 1$ let
\begin{align*}
    \cX_{n+1}:=\{v\in \cR_{(n+1)N}: \alpha_N(v)\subseteq \cX_n\},
\end{align*}
the set of renewed vertices in generation $(n+1)N$ whose generation-$nN$ ancestors are in $\cX_n$.
For each $n\ge 1$, let $X_n := |\cX_n|$.

Note that for each $v\in I_{(n+1)N}$ with $\alpha_N(v)\subseteq \cX_n \subseteq \cR_{nN}$, we have $v\in I^u_N$ for some $u\in \cR_{nN}$ and so
by property {\it ii)} of \refL{lem:Iu}, we have $\alpha_N(v)=\{u\}$.
Therefore for each $n\ge 1$, we can write 
$$
X_{n+1}= \sum_{u\in \cX_{n}} X^u_{n+1},
\quad \text{where } X^u_{n+1} = |\{v\in \cR_{(n+1)N}: \alpha_N(v)=\{u\}\}|.
$$
Moreover, by Lemma~\ref{lem:distGu}, conditional on $\cF_{nN}$, $(X^u_{n+1})_{u \in \cX_n}$ are i.i.d.~with the same distribution as $R'_N$.
Since $I_{nN}\supseteq \cX_n$ and so $Z_{nN}\ge X_n$ for each $n\ge 1$, the result follows.
\end{proof}

\section{Merger stage and upper bounds}\label{sec:upper}
From now on, we assume $p,q\in [0,1]$.
In this section, we establish upper and lower bounds on $Z_n$, and then prove upper bounds on the expectations of some relevant quantities.
For $n\ge 2$, let $M_n$ denote the number of mergers that occur when $I_n$ is constructed from $\tilde I_n$, i.e.~let

\begin{equation} \label{eq:Mndefn}
M_n := |\{ \{u,v\}: u,v \in \tilde I_n, u\stackrel{m}{\sim} v\}|.
\end{equation}
Recall the definition of $\tilde J^{(2)}_n$ in~\eqref{eq:Jnkdefn}.
For $n\ge 2$, let
\begin{align}
    \widetilde M_n &:= |\{ \{u,v\}: u,v \in \tilde J_n^{(2)}, u\stackrel{m}{\sim} v\}| \label{eq:tildeMdefn} \\
    \text{and }\quad \widetilde L_n &:=|\{(u,v,w)\in \tilde J_n^{(2)}\times \tilde J_n^{(2)} \times \tilde J_n^{(2)}:u\stackrel{m}{\sim} v,v\stackrel{m}{\sim} w, u\neq w\}|. \label{eq:tildeLdefn}
\end{align}
For $n\ge 1$, let
\begin{equation} \label{eq:Yndefn}
Y_n :=\sum_{u\in I_{n-1}}|\mathcal C_u |=|\tilde I_n |.
\end{equation}

\begin{lem}\label{lem:Z}
For all $n\ge 2$,
\begin{align}\label{eq:Z}
    \widetilde M_n-\widetilde L_n \le Y_n-Z_n \le M_n.
\end{align}
\end{lem}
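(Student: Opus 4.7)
The plan is to recognise $Y_n-Z_n$ as the rank of a natural auxiliary graph. Define $\mathcal H$ to be the simple graph with vertex set $\tilde I_n$ and an edge $\{u,v\}$ precisely when $u\stackrel{m}{\sim} v$. By the construction of $\sim$ in \refS{subsec:UlamHarris} as the transitive closure of $\stackrel{m}{\sim}$, the equivalence classes of $\sim$ are exactly the connected components of $\mathcal H$; hence $Y_n=|V(\mathcal H)|$, $Z_n$ is the number of components of $\mathcal H$, and so $Y_n-Z_n$ equals the number of edges in any spanning forest of $\mathcal H$. Since $|E(\mathcal H)|=M_n$, the upper bound $Y_n-Z_n\le M_n$ is immediate.

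For the lower bound I would restrict to the subgraph $\widetilde{\mathcal H}$ of $\mathcal H$ induced by $\tilde J_n^{(2)}$, which has exactly $\widetilde M_n$ edges. Any spanning forest of $\widetilde{\mathcal H}$ is also a forest in $\mathcal H$, and hence extends to a spanning forest of $\mathcal H$ by the standard greedy/matroid argument. Writing $\widetilde V=|\tilde J_n^{(2)}|$ and $\widetilde C$ for the number of components of $\widetilde{\mathcal H}$, this gives
$$
Y_n-Z_n\;\ge\;\widetilde V-\widetilde C\;=\;\widetilde M_n-\bigl(\widetilde M_n-\widetilde V+\widetilde C\bigr),
$$
so the lower bound in~\eqref{eq:Z} reduces to showing that the cycle rank $\widetilde M_n-\widetilde V+\widetilde C$ of $\widetilde{\mathcal H}$ is at most $\widetilde L_n$.

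To close the argument, observe that $\widetilde L_n$ counts ordered length-$2$ walks in $\widetilde{\mathcal H}$ with distinct endpoints, and so
$$
\widetilde L_n=\sum_{v\in\tilde J_n^{(2)}}\deg_{\widetilde{\mathcal H}}(v)\bigl(\deg_{\widetilde{\mathcal H}}(v)-1\bigr).
$$
I would then prove the elementary inequality that for every finite simple graph on $V$ vertices and $E$ edges with $C$ components, $\sum_v\deg(v)(\deg(v)-1)\ge E-V+C$. From $d^2\ge 2d-1$ for each integer $d\ge 1$, summing over non-isolated vertices gives $\sum_v\deg(v)^2\ge 4E-V+I$, where $I$ is the number of isolated vertices; since each non-trivial component contains at least one edge, $I\ge C-E$, and combining these two inequalities and then subtracting $2E=\sum_v\deg(v)$ from both sides yields the claim.

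The only non-routine step is this last graph-theoretic inequality, which is precisely what explains the appearance of $\widetilde L_n$ in the lower bound; the rest is matroid/rank bookkeeping for the auxiliary graph $\mathcal H$, together with the observation that restricting attention to $\tilde J_n^{(2)}$ does not lose too much because mergers outside this set are cheap to discard.
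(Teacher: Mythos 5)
Your proof is correct, and it reaches the same two combinatorial facts the paper uses --- the identities $2m=\sum_v d_v$ and $\ell=\sum_v d_v(d_v-1)$ for the merge graph --- but packages them differently. The paper decomposes everything per merge class: writing $\sigma(u)$ for the equivalence class collapsing to $u\in I_n$, it gets the upper bound from $M_{n,u}\ge|\sigma(u)|-1$ (connectivity of each class) and the lower bound by applying the inequality $m-\ell\le t-1$ (its Lemma~\ref{lem:ML}, stated for simple non-empty graphs) to the subgraph induced on $\sigma(u)\cap\tilde J_n^{(2)}$, using $|\sigma(u)\cap\tilde J_n^{(2)}|-1\le|\sigma(u)|-1$ and treating the empty intersection separately; summing over $u\in I_n$ gives \eqref{eq:Z}. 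You instead argue globally: $Y_n-Z_n$ is the rank of the merge graph $\mathcal H$, the restriction to $\tilde J_n^{(2)}$ has rank at most that of $\mathcal H$, and the cycle rank of the restriction is at most $\widetilde L_n$ via the component-refined inequality $m-\ell\le t-c$ (which also follows by summing the paper's Lemma~\ref{lem:ML} over components, and which your degree-count argument proves correctly). The global route avoids the per-class case analysis and makes the role of $\widetilde L_n$ as a cycle-rank bound transparent; the paper's route avoids any matroid language and keeps the needed appendix lemma in its weakest form. Both are complete and of comparable length.
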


\begin{proof}
For $u\in I_n$, recall from~\eqref{eq:sigmavdefn} that we let
$\sigma(u):=\{v\in \tilde I_n :u\sim v\}$, the equivalence class of offspring that merges into the single individual $u$. 
Let
$$
M_{n,u}:=|\{ \{v,v'\}: v,v' \in \sigma(u), v\stackrel{m}{\sim} v'\}|,
$$
the number of mergers within the equivalence class $\sigma (u)$.
Then for each $u\in I_n$, by the definition of the equivalence relation $\sim$ in Section~\ref{subsec:UlamHarris} we have $M_{n,u}\ge |\sigma(u)|-1$, and so
$$
M_n =\sum_{u\in I_n}M_{n,u}\ge \sum_{u\in I_n}|\sigma(u)|-|I_n|=Y_n-Z_n,
$$
by the definition of $Y_n$ in~\eqref{eq:Yndefn},
which establishes the second inequality in~\eqref{eq:Z}.

For the first inequality in~\eqref{eq:Z}, for $u\in I_n$, let
$$
\widetilde L_{n,u} :=|\{(v_1,v_2,v_3):v_i \in \sigma(u) \cap \tilde J_n^{(2)}\; \forall 1\le i\le 3,\, v_1\stackrel{m}{\sim} v_2,v_2\stackrel{m}{\sim} v_3, v_1 \neq v_3\}|,
$$
and let 
$$
\widetilde M_{n,u}:=|\{ \{v,v'\}: v,v' \in \sigma(u)\cap \tilde J^{(2)}_n, v\stackrel{m}{\sim} v'\}|.
$$
If $|\sigma(u)\cap \tilde{J}_n^{(2)}|=0$, then $\widetilde M_{n,u}-\widetilde L_{n,u}=0\le |\sigma(u)|-1$.
Suppose instead that $|\sigma(u)\cap \tilde{J}_n^{(2)}|> 0$.
Then consider the graph $G^u=(V^u,E^u)$, where
\begin{align*}
    V^u &= \sigma(u) \cap \tilde J^{(2)}_n\\
    \text{and } \quad E^u &= \{ \{v,v'\}:v,v' \in V^u , v\stackrel{m}{\sim} v'\}.
\end{align*}
Note that $G^u$ contains exactly $\widetilde L_{n,u}$ directed paths of length two.
It is a straightforward fact that a simple, non-empty graph with $t$ vertices, $m$ edges and $\ell$ directed paths of length two satisfies $m-\ell<t$ (see \refL{lem:ML} in the Appendix). Thus, $\widetilde M_{n,u}- \widetilde L_{n,u}\le |\sigma(u)\cap \tilde{J}_n^{(2)}|-1\le |\sigma(u)|-1.$
It follows that 
$$
\widetilde M_n -\widetilde L_n =\sum_{u\in I_n}(\widetilde M_{n,u}-\widetilde L_{n,u})\le \sum_{u\in I_n}|\sigma(u)|-|I_n|= Y_n-Z_n,
$$
as required.
\end{proof}
Note that we could have proved by the same argument that $Y_n-Z_n \ge M_n-L_n$, where 
$L_n := |\{(u,v,w)\in \tilde I_n \times \tilde I_n \times \tilde I_n: u\stackrel{m}{\sim}v, v\stackrel{m}{\sim}w, u\neq w\}|, $ but it will turn out that proving a suitable lower bound on $\E{\widetilde M_n -\widetilde L_n}$ is easier than proving such a bound on $\E{M_n-L_n}$.

\subsection{Preliminary results} \label{subsec:prelimNn}

Let $j_1 , \ldots , j_k$ be non-negative integers. For $n\ge 0$ and a vertex $u\in I_n$, let
\begin{equation} \label{eq:Nsetdefn}
    \cN^{(j_1,\ldots, j_k)}(u):=\{(v_1,\ldots, v_k)\in \prod_{i=1}^k I_{n+j_i}:\,  u\in \alpha(v_i), \alpha(v_i)\cap \alpha(v_{i'})\subseteq \cup_{m\le n}I_m 
    \text{ for all } i\neq i'\},
\end{equation}
the set of ordered $k$-tuples of vertices $(v_1,\ldots, v_k)$ with $v_i\in I_{n+j_i}$ for each $i$ which do not have any pairwise ancestors more recently than their common ancestor $u$.
Let  
\begin{align} \label{eq:Nnumdefns}
N^{(j_1,\ldots,j_k)}(u):=|\cN^{(j_1,\ldots, j_k)}(u)|
\quad \text{and}\quad
    N_n^{(j_1,\ldots,j_k)}:=\sum_{u\in I_n} N^{(j_1,\ldots,j_k)}(u).
\end{align}
The following simple upper bound on the expectation of $N_n^{(j_1,\ldots, j_k)}$ will be used several times in the rest of the paper.
\begin{lem}\label{lem:EN}
Let $j_1,\ldots, j_k$ be non-negative integers. For any $n\ge 0$,  
\begin{align*}
    \E{N_n^{(j_1,\ldots,j_k)}}\le (1+p)^{\sum_{i=1}^k j_i} \E{Z_n}. 
\end{align*}
\end{lem}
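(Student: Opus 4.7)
The plan is to prove the bound by induction on $M := \max_i j_i$. The base case $M = 0$ is immediate: if every $j_i = 0$ then $u \in \alpha(v_i) \cap I_n = \{v_i\}$, forcing $v_i = u$ for each $i$, so $N_n^{(0,\ldots,0)} = Z_n$ and the bound holds with equality.

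For the inductive step with $M \ge 1$, let $S := \{i : j_i = M\}$ and $s := |S|$. I would define a reduction map sending $(u, v_1, \ldots, v_k) \in \cN^{(j_1,\ldots,j_k)}(u)$ to $(u, c_1, \ldots, c_k)$, where $c_i := v_i$ for $i \notin S$ and, for $i \in S$, $c_i$ is the lexicographically smallest element of $\alpha_1(v_i)$ satisfying $u \in \alpha(c_i)$ (such a parent exists because any path in $G$ from $u$ to $v_i$ has a suitable penultimate vertex). The image lies in $\cN^{(j'_1,\ldots,j'_k)}(u)$ with $j'_i := j_i - \I{i \in S}$, since $\alpha(c_i) \subseteq \alpha(v_i)$ preserves the pairwise disjointness condition, and the new maximum is $M - 1$. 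Each preimage requires $v_i$ to be a child of $c_i$ in $I_{n+M}$ for $i \in S$, bounding the preimage count by $\prod_{i \in S} |\cC_{c_i}|$.

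The crucial step is taking the expectation of this preimage count. When $M \ge 2$, the $c_i$ for $i \in S$ must be \emph{distinct} vertices of $I_{n+M-1}$: if $c_i = c_{i'}$ for distinct $i, i' \in S$, this common vertex would lie in $\alpha(v_i) \cap \alpha(v_{i'})$ at level $n+M-1 > n$, violating disjointness. Distinctness makes the $|\cC_{c_i}|$'s conditionally independent given $\cF_{n+M-1}$ (each depends on its own $\xi_{c_i}$ and associated $\delta$-variables, all independent of $\cF_{n+M-1}$), with $\E{|\cC_{c_i}|\,|\,\cF_{n+M-1}} = (1+p)(1-q)^{k_{c_i}} \le 1+p$. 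Since the image-tuple indicator is $\cF_{n+M-1}$-measurable (the reduced maximum is $M-1$), the expectation factors to give $\E{N_n^{(j_1,\ldots,j_k)}} \le (1+p)^s \E{N_n^{(j'_1,\ldots,j'_k)}}$; the inductive hypothesis then closes the argument.

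The edge case $M = 1$ forces each $c_i = u$ (an ancestor in $I_n$ of $v_i \in I_{n+1}$ must equal $u$ itself), and disjointness requires the $v_i$'s for $i \in S$ to be distinct children of $u$; the preimage count is thus bounded by the falling factorial $(|\cC_u|)_s$, whose conditional expectation given $\cF_n$ equals $\lambda_u^s \le (1+p)^s$ by the factorial moment identity for $|\cC_u| \sim \Poi{(1+p)(1-q)^{k_u}}$. The main obstacle to anticipate is the decoupling between $|\cC_{c_i}|$ and the image-tuple indicator: it is essential to reduce all tied-maximum components simultaneously, since a one-at-a-time reduction would leave the new maximum at $M$ and the image indicator would then depend on $\xi$-variables controlling the $|\cC_{c_i}|$'s, defeating the independence argument.
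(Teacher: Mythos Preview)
Your proof is correct and follows essentially the same approach as the paper's: induct on the maximum of the $j_i$'s, simultaneously step down every coordinate tied for the maximum by replacing $v_i$ with a parent, use the pairwise-disjoint-ancestry condition to force distinctness of the chosen parents (for $M\ge 2$), and exploit conditional independence and Poisson factorial moments to extract a factor $(1+p)^s$. The only cosmetic differences are that the paper reduces first to a per-vertex conditional bound $\E{N^{(j_1,\ldots,j_k)}(u)\mid\cF_n}\le (1+p)^{\sum j_i}$ and bounds the number of children by $\xi_{w_i}$ rather than $|\cC_{c_i}|$; neither affects the argument.
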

\begin{proof}
Without loss of generality, we may assume $j_1 \ge \ldots \ge j_k\ge 0.$
Note first that it suffices to prove the uniform bound  
\begin{align}\label{eq:ENu}
    \E{N^{(j_1,\ldots,j_k)}(u)\Big|\mathcal F_n}\le (1+p)^{\sum_{i=1}^k j_i}
\end{align}
for any $n\ge 0$ and $u\in I_n$. 
Indeed, assuming~\eqref{eq:ENu}, 
\begin{align*}
\E{N^{(j_1,\ldots,j_k)}_n} =\E{\E{N^{(j_1,\ldots,j_k)}_n \Big|\mathcal F_n}}
&= \E{\sum_{u \in I_n}\E{N^{(j_1,\ldots,j_k)}(u)\Big|\mathcal F_n}}\\
&\le (1+p)^{\sum_{i=1}^k j_i}\E{Z_n},
\end{align*}
as required.
We will prove \eqref{eq:ENu} by induction on $j_1=\max_\ell\{j_\ell\}$.

We begin by considering the case $j_k=0$; note that by the definition of $\mathcal N^{(j_1,\ldots,j_k)}(u)$ in~\eqref{eq:Nsetdefn}, in this case we have
$
N^{(j_1,\ldots,j_k)}(u)=N^{(j_1,\ldots,j_{k-1})}(u),
$
and $N^{(0)}(u)=1$.
Hence from now on we may assume that $j_k\ge 1$.

Let us now consider the case $j_1=j_k=1$, where $N^{(1,\ldots,1)}(u)$ counts the number of ordered $k$-tuples of distinct offspring of $u$. 
By~\eqref{eq:Inudefn} and~\eqref{eq:offspringbound}, the number of offspring of $u$ is
$|I^u_1|\le \xi_u.$
Since $\xi_u$ has Poisson distribution with mean $1+p$, and $\xi_u$ is independent of $\mathcal F_n$, we have 
\begin{align*}
    \E{\left. N^{(1,\ldots,1)}(u)\right|\mathcal F_n}\le \E{(\xi_u)_k}= (1+p)^k.
\end{align*}
(Here, we write $(x)_k:=x(x-1)\ldots (x-k+1)$ for the $k$th falling factorial.)

For the induction step, suppose that $j_1\ge 2$ and let $\ell:=\max\{i:\, j_i=j_1\}$. Set $j'_i:=j_i-\I{i\le \ell}\ge 1$ for $i\le k$. 
Then for each $(v_1,\ldots, v_k) \in \cN^{(j_1,\ldots, j_k)}(u)$, 
there exists
$(w_1,\ldots, w_k)\in \cN^{(j'_1,\ldots, j'_k)}(u)$
such that $w_i=v_i$ $\forall i>\ell$ and $w_i\in \alpha_1(v_i)$ $\forall i\le \ell$. 
Moreover, for each $(w_1,\ldots, w_k)\in \cN^{(j'_1,\ldots, j'_k)}(u)$,
by~\eqref{eq:offspringbound} again,
$$
|\{(v_1,\ldots, v_k) \in \cN^{(j_1,\ldots, j_k)}(u):w_i=v_i\;\forall i>\ell, w_i\in \alpha_1(v_i) \; \forall i\le \ell \}|
\le \prod_{i=1}^\ell \xi_{w_i}.
$$
It follows that 
\begin{align*}
    \E{\left. N^{(j_1,\ldots,j_k)}(u)\right|\cF_{n+j_1-1}}
    &\le \sum_{(w_1,\ldots,w_k)\in\cN^{(j'_1,\ldots, j'_k)}(u)} 
    \E{\left. \prod_{i=1}^\ell \xi_{w_i} \right| \mathcal F_{n+j_1-1}}\\
    &= N^{(j'_1,\ldots,j'_k)}(u)(1+p)^{\ell},
\end{align*}
since $w_1,\ldots ,w_k\in I_{n+j_1-1}$ are distinct for each $(w_1,\ldots,w_k)\in\cN^{(j'_1,\ldots, j'_k)}(u)$.
By taking expectations, this establishes the induction step for \eqref{eq:ENu}. 
\end{proof}

We now define a similar quantity to $N_n^{(j_1,\ldots,j_k)}$ which will also be used several times later in the paper.
Let $j_1 , \ldots , j_k$ be non-negative integers. For $n\ge 0$ and a vertex $u\in I_n$, let
\begin{equation} \label{eq:barNsetdefn}
    \overline{\cN}^{(j_1,\ldots, j_k)}(u):=\{(v_1,\ldots, v_k)\in \prod_{i=1}^k I_{n+j_i}:\,  u\in \alpha(v_i) 
    \text{ for all } i\},
\end{equation}
the set of ordered $k$-tuples of vertices $(v_1,\ldots, v_k)$ with $v_i\in I_{n+j_i}$ for each $i$ and with common ancestor $u$.
Let  
\begin{align} \label{eq:barNnumdefn}
\overline{N}^{(j_1,\ldots,j_k)}(u):=|\overline{\cN}^{(j_1,\ldots, j_k)}(u)|
\quad \text{and}\quad 
    \overline{N}_n^{(j_1,\ldots,j_k)}:=\sum_{u\in I_n} \overline{N}^{(j_1,\ldots,j_k)}(u).
\end{align}
We can prove a similar upper bound to Lemma~\ref{lem:EN} for the expectation of $\overline{N}_n^{(j_1,\ldots,j_k)}$.
\begin{lem}\label{lem:ENbar}
Let $j_1,\ldots, j_k$ be non-negative integers with $m = \max_{i}\{j_i\}$. Let $\xi$ be a Poisson random variable with mean $1+p$. For any $n\ge 0$,  
\begin{align*}
    \E{\overline{N}_n^{(j_1,\ldots,j_k)}}\le \E{\xi^{k}}^m \E{Z_n}. 
\end{align*}
\end{lem}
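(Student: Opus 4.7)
The plan is to adapt the inductive strategy used in Lemma~\ref{lem:EN}, modified to account for the fact that $\overline{\cN}^{(j_1,\ldots,j_k)}(u)$ allows ancestors to be shared below generation $n$ (there is no distinctness condition). As in Lemma~\ref{lem:EN}, it suffices to establish the uniform conditional bound
\[
\E{\overline{N}^{(j_1,\ldots,j_k)}(u)\mid \cF_n}\le \E{\xi^k}^{m}\quad\text{for every }u\in I_n,
\]
after which summing over $u\in I_n$ yields the lemma. I would induct on $m=\max_i j_i$, the base $m=0$ being trivial since $\overline{N}^{(0,\ldots,0)}(u)=1$.

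For the inductive step, I would let $S:=\{i:j_i=m\}$, $\ell:=|S|$, and $j'_i:=j_i-\I{i\in S}$, so that $\max_i j'_i=m-1$. For each $(v_1,\ldots,v_k)\in\overline{\cN}^{(j_1,\ldots,j_k)}(u)$ and each $i\in S$, at least one parent $w_i$ of $v_i$ satisfies $u\in\alpha(w_i)$ (it lies on the path from $u$ to $v_i$); for $i\notin S$ set $w_i=v_i$. Then $(w_1,\ldots,w_k)\in\overline{\cN}^{(j'_1,\ldots,j'_k)}(u)$, and using~\eqref{eq:offspringbound} to bound the number of children of each $w_i$ gives
\[
\overline{N}^{(j_1,\ldots,j_k)}(u)\le \sum_{(w_1,\ldots,w_k)\in \overline{\cN}^{(j'_1,\ldots,j'_k)}(u)}\ \prod_{i\in S}\xi_{w_i}.
\]
This quantity is $\widetilde\cF_{n+m}$-accessible via a $\cF_{n+m-1}$-measurable outer tuple and independent Poisson inputs $(\xi_w)_{w\in \cU_{n+m-1}}$, so taking conditional expectation given $\cF_{n+m-1}$ separates the sum from the offspring factor.

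The main obstacle is that the $w_i$ for $i\in S$ all live in generation $n+m-1$ and need not be distinct, so $\E{\prod_{i\in S}\xi_{w_i}\mid \cF_{n+m-1}}$ factors as $\prod_w \E{\xi^{r_w}}$ over distinct values $w$ with multiplicities $r_w$ summing to $\ell$. To bound this uniformly, I would apply Jensen's inequality to the concave map $x\mapsto x^{r_w/\ell}$ at the random variable $\xi^\ell$, giving $\E{\xi^{r_w}}\le \E{\xi^\ell}^{r_w/\ell}$ and hence $\prod_w\E{\xi^{r_w}}\le \E{\xi^\ell}$. Since $\xi$ is $\{0,1,2,\ldots\}$-valued we have $\xi^\ell\le \xi^k$ pointwise, so $\E{\xi^\ell}\le \E{\xi^k}$.

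Combining these steps yields
\[
\E{\overline{N}^{(j_1,\ldots,j_k)}(u)\mid \cF_{n+m-1}}\le \E{\xi^k}\cdot \overline{N}^{(j'_1,\ldots,j'_k)}(u),
\]
and taking conditional expectation given $\cF_n$ and invoking the induction hypothesis (with exponent $m-1$) on the right-hand side closes the induction. Summing over $u\in I_n$ then gives $\E{\overline{N}_n^{(j_1,\ldots,j_k)}}\le \E{\xi^k}^m\E{Z_n}$, as required.
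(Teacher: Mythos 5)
Your proposal is correct and follows essentially the same route as the paper: reduce to the uniform conditional bound, induct on $\max_i j_i$ by stripping one generation off the maximal coordinates, bound preimage sizes by $\prod\xi_{w_i}$ via~\eqref{eq:offspringbound}, and control the possibly repeated factors with Jensen's inequality exactly as in~\eqref{eq:useJensen}. The only cosmetic difference is that you index the maximal coordinates by a set $S$ rather than first sorting $j_1\ge\cdots\ge j_k$.
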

\begin{proof}
Without loss of generality, we may assume $j_1 \ge \ldots \ge j_k\ge 0.$
By the same argument as in the proof of \refL{lem:EN}, it suffices to prove the uniform bound  
\begin{align}\label{eq:ENubar}
    \E{\overline{N}^{(j_1,\ldots,j_k)}(u)\Big|\mathcal F_n}\le \E{\xi^{k}}^m
\end{align}
for any $n\ge 0$ and $u\in I_n$. 
We will prove \eqref{eq:ENubar} by induction on $j_1=\max_\ell\{j_\ell\}$.

In the case $j_k=0$, by the definition of $\overline{\mathcal N}^{(j_1,\ldots,j_k)}(u)$ in~\eqref{eq:barNsetdefn}, we have 
$
\overline{N}^{(j_1,\ldots,j_k)}(u)=\overline{N}^{(j_1,\ldots,j_{k-1})}(u),
$
and $\overline{N}^{(0)}(u)=1$.
From now on we will assume that $j_k\ge 1$.

We now consider the case $j_1=j_k=1$, where $\overline{N}^{(1,\ldots,1)}(u)$ counts the number of ordered $k$-tuples of (not necessarily distinct) offspring of $u$. 
By~\eqref{eq:Inudefn} and~\eqref{eq:offspringbound},
the number of offspring of $u$ is 
$|I^u_1|\le \xi_u,$
and, conditional on $\mathcal F_n$, $\xi_u \stackrel{d}{=} \xi$. Therefore
\begin{align*}
    \E{\left. \overline{N}^{(1,\ldots,1)}(u)\right|\mathcal F_n}\le \E{\left. (\xi_u)^k \right| \cF_n }= \E{\xi^k}.
\end{align*}

As in the proof of Lemma~\ref{lem:EN},
for the induction step, suppose that $j_1\ge 2$ and let $\ell=\max\{i:\, j_i=j_1\}$. Set $j'_i=j_i-\I{i\le \ell}\ge 1$ for $i\le k$. 
Then for each $(v_1,\ldots, v_k) \in \overline{\cN}^{(j_1,\ldots, j_k)}(u)$, 
there exists
$(w_1,\ldots, w_k)\in \overline{\cN}^{(j'_1,\ldots, j'_k)}(u)$
such that $w_i=v_i$ $\forall i>\ell$ and $w_i\in \alpha_1(v_i)$ $\forall i\le \ell$. 
Moreover, for each $(w_1,\ldots, w_k)\in \overline{\cN}^{(j'_1,\ldots, j'_k)}(u)$, by~\eqref{eq:offspringbound},
$$
|\{(v_1,\ldots, v_k) \in \overline{\cN}^{(j_1,\ldots, j_k)}(u):w_i=v_i\;\forall i>\ell, w_i\in \alpha_1(v_i) \; \forall i\le \ell \}|
\le \prod_{i=1}^\ell \xi_{w_i}.
$$
Since $j_1',\ldots , j_k'\le j_1-1$, it follows that 
\begin{equation} \label{eq:barNcalc}
    \E{\left. \overline{N}^{(j_1,\ldots,j_k)}(u)\right|\cF_{n+j_1-1}}
    \le \sum_{(w_1,\ldots,w_k)\in \overline{\cN}^{(j'_1,\ldots, j'_k)}(u)} 
    \E{\left. \prod_{i=1}^\ell \xi_{w_i} \right| \mathcal F_{n+j_1-1}}.
\end{equation}
(Note that for $(w_1,\ldots, w_k) \in \overline{\cN}^{(j'_1,\ldots, j'_k)}(u)$, $w_1,\ldots, w_k$ need not be distinct.) By Jensen's inequality, we have
$$
\E{\xi^{\ell'}}\le \E{\xi^\ell}^{\ell'/\ell} \; \text{for each }\ell ' \le \ell,
$$
and therefore, for $w_1,\ldots , w_\ell \in I_{n+j_1-1}$,
\begin{equation} \label{eq:useJensen}
\E{\left. \prod_{i=1}^\ell \xi_{w_i} \right| \mathcal F_{n+j_1-1}}\le
\E{\xi^\ell}\le \E{\xi^k}^{\ell/k}\le \E{\xi^k},
\end{equation}
where the second inequality follows by Jensen's inequality and since $\ell\le k$, and the last inequality follows since $\E{\xi^k}\ge (1+p)^k\ge 1$.
By substituting into~\eqref{eq:barNcalc} and taking expectations, this establishes the induction step for \eqref{eq:ENubar}. 
\end{proof}

\subsection{Upper bounds} 
We now use the results in Section~\ref{subsec:prelimNn} to prove some useful upper bounds on expectations of the random variables in Lemma~\ref{lem:Z}.
\begin{lem}\label{lem:EML'}
For $n\ge 2$, 
\begin{align}
    \E{M_n}&\le \frac{q}{2}(1+p)^4\E{Z_{n-2}} \label{eq:EM}\\
    \text{and }\quad \E{\widetilde L_n}&\le 4q^2(1+p)^6 \E{Z_{n-2}}. \label{eq:EL'}
\end{align}
\end{lem}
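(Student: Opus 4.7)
The plan is to bound both quantities by conditioning in two stages: first on $\widetilde{\cF}_n$ to handle the merger indicators $\mu$, and then on $\cF_{n-1}$ to replace the generation-$n$ offspring counts $|\cC_u|$ by their means. The resulting expressions will involve the counting random variables $N^{(1,1)}_{n-2}$ and $N^{(1,1,1)}_{n-2}$, which are controlled by Lemma~\ref{lem:EN}.

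For~\eqref{eq:EM}, note that by the definition~\eqref{dfn:F'n} of $\widetilde{\cF}_n$, the indicators $(\mu_{\{u,v\}})_{u\neq v \in \cU_n}$ are independent of $\widetilde{\cF}_n$, while $\tilde G_n$ is $\widetilde{\cF}_n$-measurable. Hence
$$\E{M_n\mid \widetilde{\cF}_n}=\tfrac{q}{2}\,\bigl|\{(u,v)\in \tilde I_n\times \tilde I_n: u\ne v,\, d_{\tilde G_n}(u,v)=4\}\bigr|.$$
Any ordered pair at distance $4$ in $\tilde G_n$ consists of cousins: $u|_{n-1}$ and $v|_{n-1}$ are distinct siblings in $I_{n-1}$. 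Indexing ordered cousin pairs by a common grandparent $g\in I_{n-2}$ of the two parents (which overcounts if the parents share more than one grandparent, harmlessly for an upper bound) yields
$$\bigl|\{(u,v): u\ne v,\, d_{\tilde G_n}(u,v)=4\}\bigr|\le \sum_{g\in I_{n-2}}\sum_{\substack{u',v'\in I^g_1\\u'\ne v'}}|\tilde I^{u'}_1|\cdot |\tilde I^{v'}_1|.$$
Conditioning on $\cF_{n-1}$, the sizes $|\tilde I^{u'}_1|=|\cC_{u'}|$ for distinct $u'\in I_{n-1}$ are conditionally independent with mean at most $1+p$, while $\sum_{g}\sum_{u'\ne v'\in I^g_1}1 = N^{(1,1)}_{n-2}$. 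Applying Lemma~\ref{lem:EN} gives~\eqref{eq:EM}.

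For~\eqref{eq:EL'}, the crucial reduction is provided by the constraint $u,v,w\in \tilde J^{(2)}_n$ in~\eqref{eq:tildeLdefn}: since each of $u,v,w$ has a unique grandparent and cousin relations force pairwise coincidence of these grandparents, all three vertices must share a single unique grandparent $g\in I_{n-2}$, with parents $u',v',w'\in I^g_1$ satisfying $u'\ne v'$ and $v'\ne w'$. Conditioning on $\widetilde{\cF}_n$ and using that $\{u,v\}\ne\{v,w\}$ (since $u\ne w$), we replace $\mu_{\{u,v\}}\mu_{\{v,w\}}$ by $q^2$, and split the remaining count according to whether $u'=w'$ or $u',v',w'$ are pairwise distinct. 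In the first case, conditioning on $\cF_{n-1}$ yields a factor
$$\E{|\cC_{u'}|(|\cC_{u'}|-1)\mid \cF_{n-1}}\cdot \E{|\cC_{v'}|\mid \cF_{n-1}}\le (1+p)^3,$$
using that $|\cC_{u'}|$ is Poisson with mean at most $1+p$, and hence has second factorial moment at most $(1+p)^2$; summing over the $N^{(1,1)}_{n-2}$ ordered sibling pairs $(u',v')$ contributes at most $(1+p)^5\E{Z_{n-2}}$ by Lemma~\ref{lem:EN}. In the second case, an analogous argument produces a factor $(1+p)^3$ multiplied by $N^{(1,1,1)}_{n-2}$, contributing at most $(1+p)^6\E{Z_{n-2}}$. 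Summing the two cases and using $p\le 1$ gives~\eqref{eq:EL'} with the constant $4$ to spare.

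The main conceptual obstacle is the reduction via $\tilde J^{(2)}_n$ described above. Without it (for example if one tried to bound $\E{L_n}$ instead of $\E{\widetilde L_n}$), the pairs $(u,v)$ and $(v,w)$ could be cousins via distinct grandparents of the possibly-merged vertex $v|_{n-1}$, requiring considerably more delicate combinatorial bookkeeping to relate the count to the quantities $N^{(\cdot)}_{n-2}$. Precisely this difficulty is the reason the authors introduce $\widetilde L_n$ rather than $L_n$ in Lemma~\ref{lem:Z}; everything else in the argument is a routine combination of conditional independence and Lemma~\ref{lem:EN}.
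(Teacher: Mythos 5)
Your proof is correct and follows essentially the same route as the paper's: condition on $\widetilde{\cF}_n$ to extract the factors of $q$, reduce cousin pairs (resp.\ triples restricted to $\tilde J^{(2)}_n$) to sibling pairs (resp.\ triples with a common unique grandparent) in $I_{n-1}$, condition on $\cF_{n-1}$ to bound the offspring counts, and finish with Lemma~\ref{lem:EN} applied to $N^{(1,1)}_{n-2}$ and $N^{(1,1,1)}_{n-2}$. The only cosmetic difference is that in the $u'=w'$ case you use the second factorial moment $\E{|\cC_{u'}|(|\cC_{u'}|-1)\mid\cF_{n-1}}\le(1+p)^2$ where the paper bounds $\E{\xi_{u'}^2\xi_{v'}\mid\cF_{n-1}}=(1+p)^2(2+p)$; both yield the stated constant.
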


\begin{proof}
By the definition of $M_n$ in~\eqref{eq:Mndefn} and our construction in Section~\ref{subsec:UlamHarris}, and then since $\E{\left. \mu_{\{u,v\}}\right|\tilde {\cF}_n}=q$ for each $u\neq v \in \mathcal U_n$,
\begin{align*}
    \E{M_n} &= \E{\sum_{\{\{u,v\}:u,v\in \tilde I_n, d_{\tilde G_n}(u,v)=4\}}\mu_{\{u,v\}}} \\
    &= q\E{|\{\{u,v\}: u,v \in \tilde I_n , d_{\tilde G_n}(u,v)=4\}|}.
    \end{align*}
By the construction of $\tilde G_n$ in Section~\ref{subsec:UlamHarris} (and recalling the definition of $\sibl$ in~\eqref{eq:sibldefn}), it follows that
    \begin{align*}
    \E{M_n} &=q \E{|\{\{u,v\}: u,v \in \tilde I_n , u|_{n-1}\sibl v|_{n-1} \}|}\\
    &=\tfrac 12 q \E{\sum_{\{(u',v')\in I_{n-1}\times I_{n-1}:u'\sibl v'\}}
    |\mathcal C_{u'}||\mathcal C_{v'}|}.
\end{align*}
Then since for each $u\in I_{n-1}$, $|\mathcal C_u|\le \xi_u$, and since $(\xi_u)_{u\in \mathcal U_{n-1}}$ is independent of $\mathcal F_{n-1}$,
it follows that
\begin{align*}
    \E{M_n} &\le \tfrac 12 q \E{\sum_{\{(u',v')\in I_{n-1}\times I_{n-1}:u'\sibl v'\}}
    \E{ \xi_{u'}\xi_{v'}\Big| \cF_{n-1}}}\\
    &= \tfrac 12 q(1+p)^2 \E{|\{(u',v')\in I_{n-1}\times I_{n-1}:u'\sibl v'\}|}\\
    &\le \tfrac 12 q(1+p)^2 \E{N^{(1,1)}_{n-2}}\\
    &\le \tfrac 12 q(1+p)^4 \E{Z_{n-2}},
\end{align*}
where the third line follows by the definition of $N_n^{(j_1,\ldots, j_k)}$ in~\eqref{eq:Nnumdefns}, and the last inequality follows
by \refL{lem:EN}.

Now recall the definitions of $\widetilde L_n$ in~\eqref{eq:tildeLdefn} and $\tilde J^{(2)}_n$ in~\eqref{eq:Jnkdefn}, and note that for each triple $(u,v,w)\in \tilde J_n^{(2)}\times \tilde J_n^{(2)} \times \tilde J_n^{(2)}$ with $u\stackrel{m}{\sim} v$ and $v\stackrel{m}{\sim} w$,
we must have $u|_{n-1}\sibl v|_{n-1}$, $v|_{n-1}\sibl w|_{n-1}$, and $\mu_{\{u,v\}}=1=\mu_{\{v,w\}}$. Since each of $u,v$ and $w$ has exactly one grandparent, we must have $|\alpha_1(u|_{n-1})|=1$ and
$\alpha_1(u|_{n-1})=\alpha_1(v|_{n-1})=\alpha_1(w|_{n-1})$.
Therefore, since conditional on $\tilde{\cF}_n$, $(\mu_{\{u_1,u_2\}})_{u_1 \neq u_2 \in \cU_n}$ are i.i.d.~Bernoulli random variables with mean $q$,
\begin{align} \label{eq:tildeLncalc}
    \E{\widetilde L_n}
    &\le q^2 \E{|\{(u,v,w)\in \tilde J_n^{(2)}\times \tilde J_n^{(2)} \times \tilde J_n^{(2)}:
    u|_{n-1}\sibl v|_{n-1}, v|_{n-1}\sibl w|_{n-1}\}|
    } \notag \\
    &= q^2 \E{\sum_{\{(u',v',w')\in J^{(1)}_{n-1}: \alpha_1(u')=\alpha_1(v')=\alpha_1(w'), u'\neq v', v'\neq w'\}} |\mathcal C_{u'}| |\mathcal C_{v'}| |\mathcal C_{w'}| }.
\end{align}
Note that for $u',v',w'\in I_{n-1}$ distinct,
$$
\E{\left. |\cC_{u'}||\cC_{v'}||\cC_{w'}| \right| \cF_{n-1}}\le \E{\left. \xi_{u'}\xi_{v'}\xi_{w'} \right| \cF_{n-1}}=(1+p)^3,
$$
and for $u'\neq v'\in I_{n-1}$,
$$
\E{\left. |\cC_{u'}|^2 |\cC_{v'}| \right| \cF_{n-1}}\le \E{\left. (\xi_{u'})^2\xi_{v'} \right| \cF_{n-1}}=(1+p)^2(2+p).
$$
Hence by conditioning on $\mathcal F_{n-1}$ in~\eqref{eq:tildeLncalc} and splitting the sum according to whether or not we have $u'=w'$,
\begin{align*}
    \E{\widetilde L_n}
    &\le q^2 (1+p)^2 \Big((1+p)\E{|\{(u',v',w')\in J^{(1)}_{n-1}\text{ distinct}: \alpha_1(u')=\alpha_1(v')=\alpha_1(w')\}|}\\
    &\hspace{2.5cm} + (2+p)\E{|\{(u',v',u')\in J^{(1)}_{n-1}:u'\neq v', \alpha_1(u')=\alpha_1(v')\}|}\Big)\\
    &\le q^2 (1+p)^2 \Big((1+p)\E{N^{(1,1,1)}_{n-2}}+(2+p)\E{N^{(1,1)}_{n-2}}\Big),
\end{align*}
and the result follows by \refL{lem:EN} and since $0\le p\le 1$.
\end{proof}
We will also need to control the number of deletions that occur when $\tilde I_{n+1}$ is constructed, i.e.~the size of the set $\{i\le \xi_u : i\notin \cC_u\}$ for each $u\in I_n$.
Recall from~\eqref{eq:kudefn} that for $u\in I_n$,
$$
k_u = |\cK_u|=|\{v\in V_n :d_{G_n}(u,v)=3\}|.
$$
We now split $k_u$ into three quantities that we will control separately: for $u\in I_n$, let
\begin{align}
    k_u^{(g)}&:=|\alpha_3(u) |, \label{eq:kug}\\
    k_u^{(a)}&:=|\{v\in I_{n-1}\setminus \alpha_1(u):\, \alpha_1(v)\cap \alpha_2(u)\neq \emptyset \}|,\label{eq:kua}\\
    k_u^{(r)}&:=|\{v\in I_{n-1}:\, \alpha_1(v)\cap \alpha_2(u)= \emptyset,\, d_{G_n}(u,v)=3 \}|. \label{eq:kur}
\end{align}
Note that $k_u=k_u^{(g)}+k_u^{(a)}+k_u^{(r)}$.
We refer to the individuals that contribute to $k_u^{(g)},$ $k_u^{(a)}$, $k_u^{(r)}$ as `great-grandparents', `aunts' and `distant relatives' of $u$ resp.~(see \refF{fig:kv3}).
We end this section with two upper bounds on the numbers of great-grandparents and aunts; the next section will be devoted to upper bounds on the number of vertices that have distant relatives.

\begin{figure}[ht!]
\begin{subfigure}[t]{.3\textwidth}
    \hfill
	\begin{center}
	\begin{tikzpicture}
	\tikzstyle{vertex}=[circle,fill=black, draw=none, minimum size=4pt,inner sep=2pt]
	\tikzstyle{vertexG}=[circle,fill=none, draw=none, minimum size=4pt,inner sep=2pt]
    		\node[vertex] (a1) at (0,3){};
    		\node[vertex] (a2) at (0,2){};
    		\node[vertex] (a3) at (0,1){};
    		\node[vertex] (a4) at (0,0){};
    		\draw (a1)--(a2)--(a3)--(a4);
        \node[] at (-.5,3){$w$};
        \node[] at (-.5,0){$u$};
        \node[vertexG] (G) at (0,-.2){};
\end{tikzpicture}
	\end{center}
\subcaption{Great-grandparent}
    \end{subfigure}	
\begin{subfigure}[t]{.3\textwidth}
    \hfill
	\begin{center}
	\begin{tikzpicture}
	\tikzstyle{vertex}=[circle,fill=black, draw=none, minimum size=4pt,inner sep=2pt]
	\tikzstyle{vertexG}=[circle,fill=none, draw=none, minimum size=4pt,inner sep=2pt]
    		\node[vertex] (l1) at (-2,1.5){};
    		\node[vertex] (a2) at (-1,3){};
    		\node[vertex] (a3) at (0,1.5){};
    		\node[vertex] (a4) at (0,0){};
        		\draw (l1)--(a2)--(a3)--(a4);
        \node[] at (-2.5,1.5){$w$};
        \node[] at (-.5,0){$u$};
        \node[vertexG] (G) at (0,-.3){};
\end{tikzpicture}
	\end{center}
\subcaption{Aunt}
    \end{subfigure}	
\begin{subfigure}[t]{.3\textwidth}
    \hfill
	\begin{center}
	\begin{tikzpicture}
	\tikzstyle{vertex}=[circle,fill=black, draw=none, minimum size=4pt,inner sep=2pt]
       		\node[vertex] (a2) at (-1,1.5){};
    		\node[vertex] (a3) at (0,0){};
    		\node[vertex] (b3) at (-2,0){};
    		\node[vertex] (c2) at (-3,1.5){};
    		\draw (a3)--(a2)--(b3)--(c2);
		\tikzstyle{vertex}=[circle,fill=black, draw=none, minimum size=4pt,inner sep=2pt]
		\node[vertex, color=gray] (r1) at (-1.7,3){};
    		\node[vertex, color=gray] (l1) at (-2.3,3){};
		\draw[color=gray] (r1)--(a2) (l1)--(c2); 
		\node[gray] at (-2,3.1) {\tiny $\neq$};
        \node[] at (-3.5,1.5){$w$};
        \node[] at (-.5,0){$u$};
	\end{tikzpicture}
	\end{center}
\subcaption{Distant relative}
    \end{subfigure}	
\caption{Types of vertices $w\in V_n$ at distance three from $u\in I_n$.}\label{fig:kv3}
\end{figure}
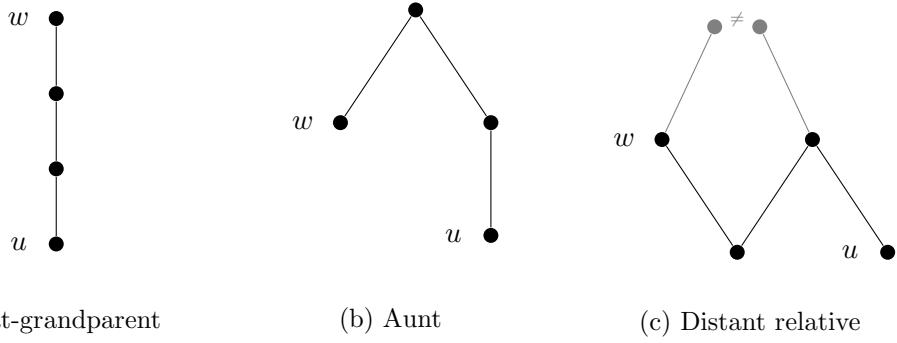

\begin{lem}\label{lem:kga}
For $n\ge 2$,
\begin{align}
\E{\sum_{u\in I_{n}} (k_u^{(g)}+k_u^{(a)})}&\le  (1+p)^3(\E{Z_{n-2}}+\I{n\ge 3}\E{Z_{n-3}}) \label{eq:kga}\\
\text{and }\quad \E{\sum_{\{u, v\in J^{(1)}_{n}:u\sibl v\}} (k_u^{(g)}+k_u^{(a)})} &\le  (1+p)^4(\E{Z_{n-2}}+\I{n\ge 3}\E{Z_{n-3}}). 
    \label{eq:kgasib}
\end{align}
\end{lem}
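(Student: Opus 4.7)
The plan is to treat $k_u^{(g)}$ and $k_u^{(a)}$ separately and, in each case, reduce the expected sum to a first-moment estimate that can be handled by \refL{lem:EN} or by direct iterated conditioning, peeling off one generation at a time.

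For the great-grandparent term, since $k_u^{(g)}=|\alpha_3(u)|$, swapping the order of summation yields
\[\sum_{u\in I_n}k_u^{(g)}=\sum_{w\in I_{n-3}}|I_3^w|=N^{(3)}_{n-3},\]
which (for $n\ge 3$) has expectation at most $(1+p)^3\E{Z_{n-3}}$ by \refL{lem:EN}, and vanishes for $n<3$. For the aunt term, each aunt $v$ of $u$ has at least one common grandparent $w\in\alpha_2(u)\cap\alpha_1(v)$, reached via some parent $v'\in\alpha_1(u)$ with $w\in\alpha_1(v')$ and $v\neq v'$. Counting the tuples $(w,v,v',u)$ of this form (which can only overcount $k_u^{(a)}$) gives
\[\sum_{u\in I_n} k_u^{(a)}\le \sum_{w\in I_{n-2}}\sum_{\substack{v,v'\in I_{n-1}:\,v\neq v'\\ w\in\alpha_1(v)\cap\alpha_1(v')}}|I_1^{v'}|.\]
Conditional on $\cF_{n-1}$ we have $\E{|I_1^{v'}| \,|\, \cF_{n-1}}\le 1+p$, and conditional on $\cF_{n-2}$ we have $\E{\chi_w(\chi_w-1)\,|\,\cF_{n-2}}\le \E{\xi_w(\xi_w-1)\,|\,\cF_{n-2}}=(1+p)^2$, where $\chi_w:=|\{v\in I_{n-1}:w\in\alpha_1(v)\}|\le \xi_w$. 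Combining gives $(1+p)^3\E{Z_{n-2}}$ and hence \eqref{eq:kga}.

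For the second inequality, the key structural observation is that if $u\in J_n^{(1)}$ has unique parent $z\in I_{n-1}$, then $\alpha_3(u)=\alpha_2(z)$ and the aunts of $u$ are precisely the siblings of $z$; in particular, both $k_u^{(g)}$ and $k_u^{(a)}$ depend on $u$ only through $z$. Writing $\chi_z^{(1)}:=|\{u\in J_n^{(1)}:\alpha_1(u)=\{z\}\}|\le\xi_z$, one has $\chi_z^{(1)}(\chi_z^{(1)}-1)\le\xi_z(\xi_z-1)$, so the sum over ordered sibling pairs $(u,v)$ with common parent $z$ factors cleanly after conditioning on $\cF_{n-1}$, gaining an extra $(1+p)^2$ over the single-vertex argument. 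The $k^{(g)}$ piece then reduces to $(1+p)^2 \E{N^{(2)}_{n-3}}\le (1+p)^4\E{Z_{n-3}}$ via \refL{lem:EN}, while the $k^{(a)}$ piece reduces to $(1+p)^2\E{\sum_{z\in I_{n-1}} s_z}$ where $s_z$ counts siblings of $z$; one more iteration of the same bound (via common parents in $I_{n-2}$) gives $\E{\sum_z s_z}\le (1+p)^2\E{Z_{n-2}}$, yielding $(1+p)^4\E{Z_{n-2}}$ and hence \eqref{eq:kgasib}.

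The one technical subtlety is the potential overcounting in the aunt term: an aunt $v$ of $u$ with several common grandparents contributes several tuples $(w,v,v',u)$ rather than one. Since we only need an upper bound this is harmless, but it is precisely what forces the factor $\chi_w(\chi_w-1)$ (and hence an extra $(1+p)$) compared to the great-grandparent case; otherwise the argument is a routine iterated first-moment estimate using $|\cC_w|\le\xi_w$ a.s.\ and $\E{\xi_w}=1+p$.
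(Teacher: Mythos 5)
Your proposal is correct and follows essentially the same route as the paper: both arguments split into the $k^{(g)}$ and $k^{(a)}$ contributions, reduce each to a count of tuples anchored at a common ancestor two or three generations back, and peel off generations by iterated conditioning (equivalently, \refL{lem:EN}), with the extra factor $(1+p)$ in~\eqref{eq:kgasib} coming from replacing a single child of the common parent by an ordered pair of distinct children, bounded via $(\xi_z)_2$. Your explicit observation that for $u\in J_n^{(1)}$ with unique parent $z$ both $k_u^{(g)}$ and $k_u^{(a)}$ are functions of $z$ alone is exactly the structural fact the paper uses (stated there as $\alpha_3(u)=\alpha_3(v)$ and $\alpha_2(u)=\alpha_2(v)$ for sibling pairs in $J_n^{(1)}$), so no substantive difference.
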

\begin{proof}
For $n\ge 2$, by the definition of $N_n^{(j_1,\ldots, j_k)}$ in~\eqref{eq:Nnumdefns}, we have
$$
\sum_{u\in I_{n}} k_u^{(g)}= \I{n\ge 3}N_{n-3}^{(3,0)}
\quad \text{ and }\quad
\sum_{u\in I_n} k_u^{(a)}\le N_{n-2}^{(2,1)}.
$$
(The inequality above is not an equality since a pair $u\in I_n$, $v\in I_{n-1}\setminus \alpha_1(u)$ with $\alpha_1(v)\cap \alpha_2(u)\neq \emptyset$ may have $|\alpha_1(v)\cap \alpha_2(u)|>1$ and so count more than once towards $N_{n-2}^{(2,1)}$.) By \refL{lem:EN}, the first statement~\eqref{eq:kga} follows.

For the second statement, note first that for $n\ge 3$, for $u,v\in J^{(1)}_{n}$ with $u\sibl v$, we have $\alpha_1(u)= \alpha_1(v)$ and so $\alpha_3(v)=\alpha_3(u)$. Therefore, by counting triples consisting of a pair of siblings with a common great-grandparent,
\begin{align*}
 \sum_{\{u , v\in J^{(1)}_{n}:u\sibl v\}} k_u^{(g)} 
 &\le \I{n\ge 3}\sum_{w\in I_{n-3}}|\{(u,v)\in J^{(1)}_n \times J^{(1)}_n : u\sibl v, w\in \alpha_3 (v)\cap \alpha_3(u)\}|
 \\
 &\le \I{n\ge 3} \sum_{\{w\in I_{n-3}, w'\in I_{n-1}:w\in \alpha_2(w')\}}(|I^{w'}_1 |)_2.
\end{align*}
Now for $w'\in I_{n-1}$, by~\eqref{eq:offspringbound},
\begin{align} \label{eq:bdoffspringch2}
    \E{\left. (|I^{w'}_1 |)_2 \right|\cF_{n-1}}
    \le \E{(\xi_{w'})_2 |\cF_{n-1}}=(1+p)^2.
\end{align}
Hence, taking expectations and conditioning on $\mathcal F_{n-1}$,
\begin{align*}
 \E{\sum_{\{u, v\in J^{(1)}_{n}:u\sibl v\}} k_u^{(g)} }
 &\le \I{n\ge 3} (1+p)^2 \E{N^{(2,0)}_{n-3}}
 \le \I{n\ge 3} (1+p)^4 \E{Z_{n-3}}
\end{align*}
by \refL{lem:EN}.
Similarly, for $n\ge 2$, for $u,v\in J^{(1)}_{n}$ with $u\sibl v$, we have $\alpha_2(u)= \alpha_2(v)$, and so 
if $z$ is an aunt of $u$ (i.e.~if $\alpha_1(z)\cap \alpha_2(u)\neq \emptyset$), then $\alpha_1(z)\cap \alpha_2(u)\cap \alpha_2(v)\neq \emptyset$. Therefore,
by counting triples consisting of a pair of siblings in generation $n$ and an aunt in generation $n-1$, where all three individuals have a common ancestor in generation $n-2$,
we have 
\begin{align*}
 \sum_{\{u , v\in J^{(1)}_{n}:u\sibl v\}} k_u^{(a)} 
 &\le \sum_{w\in I_{n-2}, (z,w')\in \mathcal N^{(1,1)}(w)}(|I^{w'}_1|)_2.
\end{align*}
Hence by taking expectations, conditioning on $\mathcal F_{n-1}$ and using~\eqref{eq:bdoffspringch2} again,
\begin{align*}
 \E{\sum_{\{u, v\in J^{(1)}_{n}:u\sibl v\}} k_u^{(a)} }
 &\le (1+p)^2 \E{N^{(1,1)}_{n-2}}
 \le (1+p)^4 \E{Z_{n-2}}
\end{align*}
by \refL{lem:EN}, and the result follows.
\end{proof}

\section{Distant relatives}\label{sec:relatives}
Recall from the discussion after~\eqref{eq:kug}-\eqref{eq:kur} that for $n\ge 1$, we say that $w\in I_{n-1}$ is a {\it{distant relative}} of $v\in I_n$ if the distance in $G_n$ between $w$ and $v$ is exactly 3 and these vertices do not share a common ancestor in $I_{n-2}$, i.e.~if $d_{G_n}(v,w)=3$ and $\alpha_1(w)\cap \alpha_2(v)=\emptyset$. In this section, we will see that for $n\ge 3$, if $v\in I_n$ has a distant relative $w$,  then two mergers can be associated to descendants of a great-grandparent 
of $v$; see \refL{lem:count_snakes} below. We use this correspondence to bound the expectations of numbers of vertices with distant relatives (see Lemma~\ref{lem:ERel} below); these bounds will be used in our lower bounds on $\E{\widetilde M_n}$ and $\E{Y_n}$ in Sections~\ref{sec:lower} and~\ref{sec:mainproofs} respectively (recall Lemma~\ref{lem:Z}).

Recall from~\eqref{eq:sigmavdefn} that for each $v\in I_n$, we let $\sigma(v)$ denote the equivalence class of offspring in $\tilde I_n$ that merge into the single individual $v$. In what follows, vertices in $\tilde I_n$ will be denoted with a tilde superscript, so for $v\in I_n$ we will have $\sigma(v)=\{\tilde{v}_1, \ldots, \tilde v_\ell\} \subseteq \tilde I_n$ for some $\ell\geq 1$. To aid in reading, vertices in the same generation will be denoted by the same letter, e.g.\ $v, v'$ for vertices in $I_n$. Recall from~\eqref{eq:tildealphadefn} that  for $m\ge 1$ and $\tilde v_i \in \tilde I_n$, $\alpha_m(\tilde v_i)$ denotes the set of vertices in $I_{n-m}$ which are at distance $m$ from $v_i$ in $\tilde G_n$. 

\begin{lem}\label{lem:snake}
Let $n\ge 3$ and let $w\in I_{n-1}$ be a distant relative of $v\in I_n$. Let $(v,w',v',w)$ be a path of length three in $G_n$. Then $v'\in I_n$ and $|\sigma(v')|\ge 3$. 
\end{lem}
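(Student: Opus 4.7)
The plan is to extract two distinct offspring of $v'$ in $\tilde I_n$ and use the distant-relative hypothesis to force an intermediate merger, yielding at least three elements in $\sigma(v')$.

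First, I would argue $v'\in I_n$. Since edges in $G_n$ only join consecutive generations, and $v\in I_n$, $w\in I_{n-1}$, the path forces $w'\in I_{n-1}$ and $v'\in I_{n-2}\cup I_n$. If $v'\in I_{n-2}$, then $v'\in \alpha_1(w')\subseteq \alpha_2(v)$ (because $w'\in \alpha_1(v)$ sends parents of $w'$ into grandparents of $v$) while simultaneously $v'\in \alpha_1(w)$, contradicting the distant-relative assumption $\alpha_1(w)\cap \alpha_2(v)=\emptyset$. The same inclusion $\alpha_1(w')\subseteq \alpha_2(v)$ combined with the hypothesis also yields the key fact $\alpha_1(w)\cap \alpha_1(w')=\emptyset$: the parents $w$ and $w'$ of $v'$ share no common parent in $I_{n-2}$.

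Next I would identify two representatives of $v'$ in $\tilde I_n$. By the Ulam--Harris construction of Section~\ref{subsec:UlamHarris}, each vertex of $\tilde I_n$ has a unique parent in $\tilde G_n$, namely an element of $I_{n-1}$. Since $w,w'\in \alpha_1(v')$ in $G_n$, there exist $\tilde v'_1\in \sigma(v')$ with sole parent $w'$ and $\tilde v'_2\in \sigma(v')$ with sole parent $w$; as $w\neq w'$ (they are at distances $1$ and $3$ from $v$), we have $\tilde v'_1\neq \tilde v'_2$. I would then show $\tilde v'_1 \not\stackrel{m}{\sim} \tilde v'_2$: any length-$4$ path from $\tilde v'_1$ to $\tilde v'_2$ in $\tilde G_n$ must begin $\tilde v'_1 - w' - x - w - \tilde v'_2$ by the uniqueness-of-parent property in $\tilde I_n$; the middle vertex $x$, being a common neighbour of $w',w\in I_{n-1}$, cannot lie in $\tilde I_n$ (whose vertices have a unique parent in $I_{n-1}$), so $x\in I_{n-2}\cap \alpha_1(w')\cap \alpha_1(w)$, which is empty by the previous paragraph.

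Finally, since $\tilde v'_1\sim \tilde v'_2$ by definition of $\sigma(v')$, the equivalence is witnessed by a chain $\tilde v'_1=u_0\stackrel{m}{\sim} u_1\stackrel{m}{\sim}\cdots\stackrel{m}{\sim} u_k=\tilde v'_2$; the previous paragraph rules out $k=1$, so $k\ge 2$. Choosing such a chain of minimal length ensures that all $u_i$ are distinct, since any repetition $u_i=u_j$ with $i<j$ admits a shortcut. Hence $u_1\in \sigma(v')\setminus\{\tilde v'_1,\tilde v'_2\}$, which gives $|\sigma(v')|\ge 3$. The main obstacle is the length-$4$ path analysis in the second paragraph: the argument relies delicately on the fact that vertices in $\tilde I_n$ have a unique parent in $\tilde G_n$, which is precisely what prevents alternative length-$4$ routes from $\tilde v'_1$ to $\tilde v'_2$ that bypass the need for a shared grandparent of $w$ and $w'$.
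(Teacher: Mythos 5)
Your proof is correct and follows essentially the same route as the paper's: both hinge on the observation that a direct merger $\tilde v'_1 \stackrel{m}{\sim} \tilde v'_2$ would force $w$ and $w'$ to share a parent in $I_{n-2}$, contradicting the distant-relative hypothesis $\alpha_1(w)\cap\alpha_2(v)=\emptyset$. The only cosmetic difference is that the paper argues by contradiction from $|\sigma(v')|=2$, whereas you rule out the direct merger and then extract a third class member from a minimal merger chain; these are equivalent.
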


\begin{proof} (See Figure~\ref{fig:snake}.)
If $w$ is a distant relative of $v$, then $\alpha_1(w)\cap \alpha_2(v)=\emptyset$, i.e.~$v$ and $w$ do not share an ancestor in $I_{n-2}$, and so it follows that all paths of length three in $G_n$ connecting $v$ and $w$ go through vertices in $I_{n-1}\cup I_n$. In particular, $v'\in I_{n}$ and $v'$ has at least two parents; namely $w$ and $w'$. It follows that there are distinct $\tilde v'_1,\tilde v'_2\in \sigma(v')$ such that $\{w\}=\tilde \alpha_1(\tilde v'_1)$ and $\{w'\}=\tilde \alpha_1(\tilde v'_2)$, i.e.\ $w$ is the parent of $\tilde v'_1$ and $w'$ is the parent of $\tilde v'_2$ in $\tilde G_n$ before the merging stage in the $n$th generation.

\begin{figure}
\begin{center}
\begin{tikzpicture}
        \tikzstyle{vertex}=[circle,fill=black, draw=none, minimum size=4pt,inner sep=2pt]
        \node[vertex] (w) at (-1.5,2){};
        \node[vertex] (w') at (1.5,2){};
        \node[vertex] (v1) at (-1,0){};
        \node[vertex] (v2) at (.5,0){};        
        \node[vertex] (v) at (2.5,0){};
        \draw (w)--(v1)
        (w')--(v2)
        (w')--(v);
        \node[] at (-2.5,.6){$\sigma(v')$};
        \draw (-.25,0) ellipse (1.75 and .75);
		\tikzstyle{vertex}=[circle,fill=black, draw=none, minimum size=4pt,inner sep=2pt]
		\node[vertex, color=gray] (r1) at (-.25,4){};
    	\node[vertex, color=gray] (l1) at (.25,4){};
		\draw[color=gray] (r1)--(w) (l1)--(w'); 
		\node[gray] at (0,4) {\tiny $\neq$};
		\node[] at (-1,2){$w$};
        \node[] at (2,2){$w'$};
        \node[] at (-.5,0){$\tilde{v}'_1$};
        \node[] at (1,0){$\tilde{v}'_2$};        
        \node[] at (3,0){$v$};
\end{tikzpicture}
\end{center}
    \caption{If $v$ has a distant relative $w$, then there is $v'$, sibling of $v$, such that $|\sigma(v')|\ge 3$. This figure illustrates the argument made in Lemma~\ref{lem:snake}.}
    \label{fig:snake}
\end{figure}
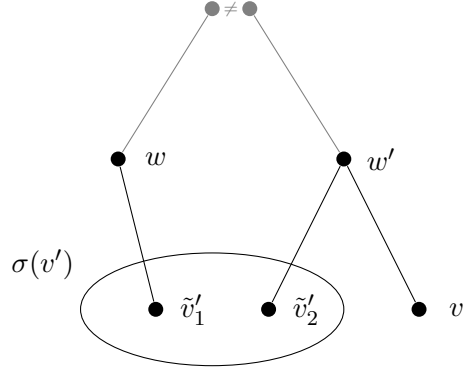

Suppose (aiming for a contradiction) that $|\sigma(v')|=2$; then as $\tilde v'_1 \neq \tilde v'_2$ we have $\sigma(v')=\{ \tilde v'_1, \tilde v'_2 \}$ and hence $\tilde v'_1 \stackrel{m}{\sim} \tilde v'_2$. Therefore $\tilde\alpha_2(\tilde v'_1)\cap \tilde\alpha_2(\tilde v'_2)\neq \emptyset$, i.e.\ $\tilde v'_1$ and $\tilde v'_2$ must have a common grandparent in $\tilde G_n$ before the merging stage in the $n$th generation. This common grandparent in $\tilde G_n$ must then be a parent of both $w$ and $w'$, and hence, since $w'$ is a parent of $v$, $v$ and $w$ have a common ancestor in generation $I_{n-2}$. But we assumed $w$ is a distant relative of $v$, which gives us a contradiction. It follows that $|\sigma(v')|\geq 3$ as claimed.
\end{proof}

We now state two simple facts about the structure of merging pairs within an equivalence class of a vertex $v$. Suppose $v\in I_n$ and $\sigma(v)=\{\tilde v_1,\ldots, \tilde v_\ell\} \subseteq \tilde I_n$. 
First, if $\ell \ge 2$, then for each $\tilde v_i$ there is $\tilde v_j \in \sigma(v)$ such that
$\tilde v_i \stackrel{m}{\sim} \tilde v_j$. Additionally if $\ell \ge 3$, in the graph with vertices given by $\tilde v_1, \ldots,\tilde v_\ell$ and edges given by merging pairs, for each $\tilde v_i$, there is a path of length two containing $\tilde v_i$. In other words, for each $\tilde v_i \in \sigma(v) \subseteq \tilde I_n$ with $|\sigma(v)|\geq 3$ there are vertices $\tilde v_{j},\tilde v_{k} \in \sigma(v)$ such that $\tilde v_i \stackrel{m}{\sim} \tilde v_j$ and either $\tilde v_i \stackrel{m}{\sim} \tilde v_k$ or $\tilde v_j \stackrel{m}{\sim} \tilde v_k$.

In genealogical terms, the following lemma says that a vertex $v$ that has a distant relative must have a great-grandparent $u$ such that among the descendants of $u$ there are at least two merging events, either both in the same generation as $v$ or one in the same generation as $v$ and one in the preceding generation. 

\begin{lem}\label{lem:count_snakes} 
Let $n\ge 3$. If $v\in I_n$ has $k_v^{(r)}>0$ then there exists $u \in \alpha_3(v)$ such that the following holds.
There exist $\tilde a, \tilde b, \tilde c, \tilde d$ with $\tilde a \stackrel{m}{\sim} \tilde b$, $\tilde c \stackrel{m}{\sim} \tilde d$,  $u \in \tilde\alpha(\tilde a) \cap \tilde \alpha(\tilde b) \cap \tilde\alpha(\tilde c) \cap \tilde \alpha(\tilde d)$ and either 
\begin{itemize}
    \item $\tilde a,\tilde b, \tilde c, \tilde d \in \tilde I_{n}$ with $\tilde a, \tilde b, \tilde d$ distinct 
\end{itemize}
\noindent or
\begin{itemize}
    \item $\tilde a, \tilde b \in \tilde I_{n-1}$ and $\tilde c, \tilde d \in \tilde I_n.$  
\end{itemize}
\end{lem}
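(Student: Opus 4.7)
The plan is to exploit Lemma~\ref{lem:snake} to get the basic structure and then analyse the $\stackrel{m}{\sim}$-graph on $\sigma(v')$. First I would fix a length-3 path $(v,w',v',w)$ in $G_n$ witnessing that the distant relative $w$ of $v$ lies at distance three; Lemma~\ref{lem:snake} then yields $v'\in I_n$ with $w,w'\in \alpha_1(v')$, $w\ne w'$, and $|\sigma(v')|\ge 3$. The distant-relative hypothesis $\alpha_1(w)\cap \alpha_2(v)=\emptyset$ combined with $\alpha_1(w')\subseteq \alpha_2(v)$ forces $\alpha_1(w)\cap \alpha_1(w')=\emptyset$, which will be used throughout.

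The key observation is that because $\sim$ is the transitive closure of $\stackrel{m}{\sim}$, the $\stackrel{m}{\sim}$-graph on $\sigma(v')$ is connected; with at least three vertices it contains a length-2 path, and in fact one through or starting at any prescribed vertex. I would fix $\tilde v^{*}\in \sigma(v')$ with $\tilde\alpha_1(\tilde v^{*})=\{w'\}$ (which exists, as in the proof of Lemma~\ref{lem:snake}) and locate three distinct vertices $\tilde v_1\stackrel{m}{\sim}\tilde v_2\stackrel{m}{\sim}\tilde v_3$ in $\sigma(v')$ with $\tilde v^{*}\in\{\tilde v_1,\tilde v_2,\tilde v_3\}$. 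Writing $p_i:=\tilde\alpha_1(\tilde v_i)\in I_{n-1}$ and letting $x_{12}\in \alpha_1(p_1)\cap \alpha_1(p_2)$ and $x_{23}\in \alpha_1(p_2)\cap\alpha_1(p_3)$ be common grandparents forcing the two mergers, I would choose $y\in\{x_{12},x_{23}\}$ so that $y\in \alpha_1(w')$; this is possible by inspecting the three cases $w'\in\{p_1,p_2,p_3\}$. The argument then splits according to whether $x_{12}$ and $x_{23}$ share a common parent in $I_{n-3}$.

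In the affirmative case (which in particular includes $x_{12}=x_{23}$), any $u\in \alpha_1(x_{12})\cap \alpha_1(x_{23})$ lies in $\alpha_1(y)\subseteq \alpha_2(w')\subseteq \alpha_3(v)$ and is simultaneously a great-grandparent in $\tilde G_n$ of each of $\tilde v_1,\tilde v_2,\tilde v_3$ (via $\tilde\alpha_3(\tilde v_i)=\alpha_2(p_i)$), delivering case~(i) with $(\tilde a,\tilde b,\tilde c,\tilde d)=(\tilde v_1,\tilde v_2,\tilde v_2,\tilde v_3)$. Otherwise $p_2$ has two distinct parents $x_{12}\ne x_{23}$ in $I_{n-2}$, so $|\sigma(p_2)|\ge 2$, and the connected $\stackrel{m}{\sim}$-graph on $\sigma(p_2)\subseteq \tilde I_{n-1}$ contains an edge incident to any prescribed vertex. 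Picking $\tilde p\in \sigma(p_2)$ with $\tilde\alpha_1(\tilde p)=\{y\}$ together with a $\stackrel{m}{\sim}$-neighbour $\tilde r\in \sigma(p_2)$, the parents $y$ and $r:=\tilde\alpha_1(\tilde r)$ share a common parent $u\in \alpha_1(y)\subseteq \alpha_3(v)$; this $u$ is simultaneously a common ancestor in $\tilde G_n$ of the merging pair from $\{\tilde v_1,\tilde v_2,\tilde v_3\}$ whose shared grandparent is $y$, delivering case~(ii).

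The main obstacle to writing this up cleanly is the bookkeeping across the subcases arising from the position of $w'$ on the length-2 path in $\sigma(v')$ and from the several ways $x_{12}$ and $x_{23}$ may or may not coincide or share a parent; the crux is to route the candidate $u\in I_{n-3}$ through $y\in \alpha_1(w')$ so that $u$ lies in $\alpha_3(v)$ rather than merely in the ancestry of $v'$ or $p_2$.
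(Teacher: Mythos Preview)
Your argument is correct and follows essentially the same strategy as the paper's proof: use Lemma~\ref{lem:snake} to get $|\sigma(v')|\ge 3$, extract a length-$2$ $\stackrel{m}{\sim}$-path in $\sigma(v')$ containing a child of $w'$, and then either find a common ancestor in $I_{n-3}$ of all three pre-merger vertices (giving case~(i)) or locate a vertex in $I_{n-1}$ with two distinct parents in $I_{n-2}$, hence a merger in $\tilde I_{n-1}$ (giving case~(ii)).

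The one organisational difference worth noting is that the paper always labels the child of $w'$ as $\tilde v'_1$ and arranges $\tilde v'_1\stackrel{m}{\sim}\tilde v'_2$ to be one of the two $\stackrel{m}{\sim}$-edges; then the shared grandparent $x\in\tilde\alpha_2(\tilde v'_1)\cap\tilde\alpha_2(\tilde v'_2)$ automatically lies in $\alpha_1(w')$. This eliminates your auxiliary choice of $y\in\{x_{12},x_{23}\}$ and the three subcases for the position of $\tilde v^{*}$ on the path, which is precisely the bookkeeping obstacle you flag at the end. The paper's dichotomy (is $x\in\tilde\alpha_2(\tilde v'_3)$?) is also slightly different from yours (do $x_{12}$ and $x_{23}$ share a parent in $I_{n-3}$?); your split is marginally coarser and in some configurations produces a case-(i) witness where the paper would pass to case~(ii), but both deliver valid witnesses.
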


\begin{proof}
(See Figure~\ref{fig:count_snakes}.) Suppose $v\in I_n$ has a distant relative (i.e.~$k^{(r)}_v>0$); by \refL{lem:snake} there exist $w'\in I_{n-1}$ and $v'\in I_n$ with $w' \in \alpha_1(v) \cap \alpha_1(v')$ and $|\sigma(v')|\geq 3$. 

Let $\tilde v'_1 \in \sigma(v')$ be one of the offspring of $w'$ before the merging stage, i.e.\ such that $\{w'\}=\tilde \alpha_1(\tilde v'_1)$; we know such a vertex exists as $w' \in \alpha_1(v')$. Since $|\sigma(v')|\geq 3$, there exist $\tilde v'_2, \tilde v'_3 \in \sigma(v')$ with $\tilde v'_1, \tilde v'_2$ and $\tilde v'_3$ all distinct, $\tilde v'_1 \stackrel{m}{\sim} \tilde v'_2$ and either (case 1) $\tilde v'_2 \stackrel{m}{\sim} \tilde v'_3$ or (case 2) $\tilde v'_1 \stackrel{m}{\sim} \tilde v'_3$.

In both cases $\tilde v'_1 \stackrel{m}{\sim} \tilde v'_2$, so we may take $x \in \tilde\alpha_2(\tilde v'_1) \cap \tilde \alpha_2(\tilde v'_2)$, i.e.\ let $x$ be a grandparent of $\tilde v'_1$ and $\tilde v'_2$ in $\tilde G_n$ before the merging stage. As $\tilde v'_1$ has a unique parent $w'$ before the merging stage, $x \in \tilde \alpha_2(\tilde v'_1)$ implies that $x \in \alpha_1(w')$ and hence $x \in \alpha_2(v)$. Observe also that if we had $x \in \tilde\alpha_2(\tilde v'_3)$, then any $u\in \alpha_1(x)$ would satisfy the requirements of the lemma and we would be done (i.e.\ we would have $u \in \alpha_3(v)$ and $\tilde v'_1, \tilde v'_2, \tilde v'_3 \in \tilde I_n$ distinct with $u \in \tilde\alpha(\tilde v'_1) \cap \tilde\alpha(\tilde v'_2) \cap \tilde\alpha(\tilde v'_3)$, $\tilde v'_1 \stackrel{m}{\sim} \tilde v'_2$ and either $\tilde v'_2 \stackrel{m}{\sim} \tilde v'_3$ or $\tilde v'_1 \stackrel{m}{\sim} \tilde v'_3$). Hence we may assume from now on that $x \notin \tilde \alpha_2(\tilde v'_3)$.\\

\noindent \emph{Case 1: $\tilde v'_2 \stackrel{m}{\sim} \tilde v'_3$.} 
First observe that $x \notin \tilde\alpha_2(\tilde v'_3)$ and $\tilde v'_2 \stackrel{m}{\sim} \tilde v'_3$ implies we may take $x' \in \tilde\alpha_2(\tilde v'_2)\cap \tilde\alpha_2(\tilde v'_3)$, i.e.\ let $x'$ be a common grandparent of $\tilde v'_2$ and $\tilde v'_3$ in $\tilde G_n$ before the merging stage, and we must have $x'\neq x$.

Take $w''$ such that $\{w''\}=\tilde \alpha_1(\tilde v'_2)$, i.e.\ let $w''$ be the parent of $\tilde v'_2$ in $\tilde G_n$ before the merging stage; as $\tilde v'_1 \stackrel{m}{\sim} \tilde v'_2$ we have that $\tilde v'_1$ and $\tilde v'_2$ are not siblings and so $w' \neq w''$. But now $x,x'\in \tilde \alpha_2(\tilde v'_2)$ implies $x, x' \in \alpha_1(w'')$ and so $|\sigma(w'')|\geq 2$. Let $\tilde w''_1 \in \sigma(w'')$ be an offspring of $x$ before merging, i.e. such that~$\tilde \alpha_1(w''_1)=\{x\}$, and take $\tilde w''_2 \in \sigma(w'')$ such that $\tilde w''_1 \stackrel{m}{\sim} \tilde w''_2$.

Thus there exists $u \in \tilde\alpha_2(\tilde w''_1) \cap \tilde\alpha_2(\tilde w''_2)$. Since $\{x\} = \tilde\alpha_1(\tilde w''_1)$, this implies $u \in \alpha_1(x)$ and hence $u \in \alpha_3(v)$. To complete this case it remains to show that $u$ satisfies the claims of the lemma.
Note that since $u\in \alpha_1(x)$ and $x\in \tilde \alpha_2(\tilde v'_1)\cap \tilde \alpha_2(\tilde v'_2)$, we have $u\in \tilde \alpha (\tilde v'_1)\cap \tilde \alpha(\tilde v'_2)$.
Consider $\tilde w''_1, \tilde w''_2,\tilde v'_1, \tilde v'_2$. We have $\tilde w''_1 \stackrel{m}{\sim} \tilde w''_2$, $\tilde v'_1 \stackrel{m}{\sim} \tilde v'_2$, $u \in \tilde\alpha(\tilde w''_1)\cap \tilde\alpha(\tilde w''_2)\cap \tilde\alpha(\tilde v'_1)\cap \tilde\alpha(\tilde v'_2)$, $\tilde w''_1, \tilde w''_2 \in \tilde I_{n-1}$ and $\tilde v'_1, \tilde v'_2 \in \tilde I_n$, which satisfies the claims of the lemma.\\

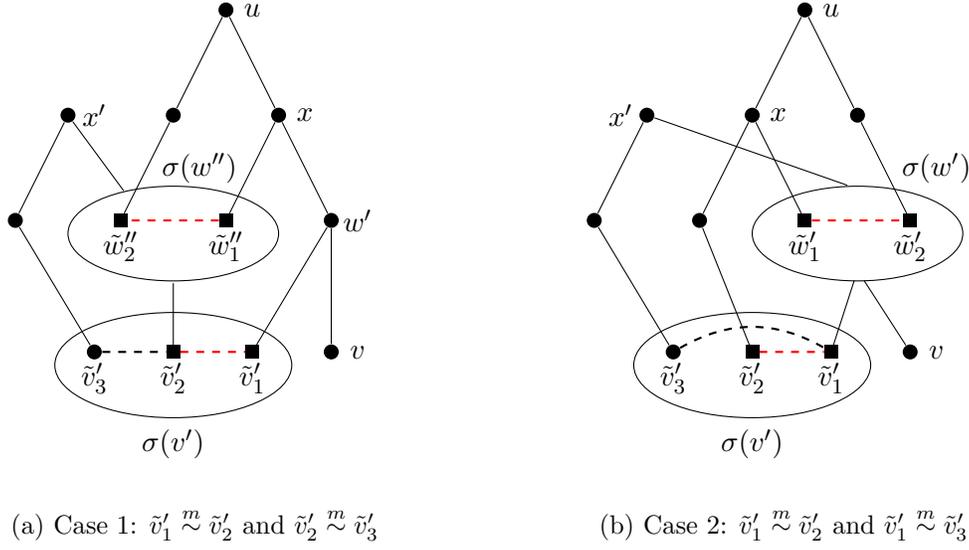
\begin{figure}
\begin{minipage}[b]{.5\textwidth}
\begin{center}
\begin{tikzpicture}[scale=.7]
        \tikzstyle{vertex}=[circle,fill=black, draw=none, minimum size=4pt,inner sep=2pt]
        \tikzstyle{vertex1}=[rectangle,fill=black, draw=none, minimum size=5 pt,inner sep=2pt]
        \node[vertex] (u) at (1,6){};
        \node[vertex] (x') at (-2,4){};
        \node[vertex] (x0) at (0,4){};
        \node[vertex] (x) at (2,4){};        
        \node[vertex] (w0) at (-3,2){};
        \node[vertex1] (w2) at (-1,2){};
        \node[] (wup) at (-0.8,2.37){};
        \node[vertex1] (w1) at (1,2){};
        \node[vertex] (w') at (3,2){};
        \node[vertex] (v3) at (-1.5,-0.5){};        
        \node[vertex1] (v2) at (0,-0.5){};        
        \node[vertex1] (v1) at (1.5,-0.5){};        
        \node[vertex] (v) at (3,-0.5){};        
        \draw (v3)--(w0)--(x')
        (x')--(wup)
        (w2)--(x0)--(u)--(x)--(w1)
        (x)--(w')--(v)
        (w')--(v1);
        \node[] (w) at (0,1){};
        \draw (w)--(v2);
        \draw[dashed, thick, color=red] (v2)--(v1)
        (w1)--(w2);
        \draw[dashed, thick] (v3)--(v2);
        \node[] at (0.5,3){$\sigma(w'')$};
        \draw (0,1.75) ellipse (2 and .9);
        \node[] at (0,-2.25){$\sigma(v')$};
        \draw (0,-0.75) ellipse (2.25 and 1);
        \node[] at (1.5,6){$u$};
        \node[] at (-1.5,4){$x'$};
        \node[] at (2.5,4){$x$};        
        \node[] at (-1,1.5){$\tilde{w}''_2$};
        \node[] at (1,1.5){$\tilde{w}''_1$};
        \node[] at (3.5,2){$w'$};
        \node[] at (-1.5,-1){$\tilde{v}'_3$};        
        \node[] at (0,-1){$\tilde{v}'_2$};        
        \node[] at (1.5,-1){$\tilde{v}'_1$};        
        \node[] at (3.5,-0.5){$v$};        
\end{tikzpicture}
\end{center}
    \subcaption{Case 1: $\tilde v'_1 \stackrel{m}{\sim} \tilde v'_2$ and $\tilde v'_2 \stackrel{m}{\sim} \tilde v'_3$}\label{fig:count_snakes1}
\end{minipage}
\begin{minipage}[b]{.5\textwidth}
\begin{center}
\begin{tikzpicture}[scale=.7]
        \tikzstyle{vertex}=[circle,fill=black, draw=none, minimum size=4pt,inner sep=2pt]
        \tikzstyle{vertex1}=[rectangle,fill=black, draw=none, minimum size=5pt,inner sep=2pt]
        \node[vertex] (u) at (1,6){};
        \node[vertex] (x') at (-2,4){};
        \node[vertex] (x0) at (0,4){};
        \node[vertex] (x) at (2,4){};        
        \node[vertex] (w0) at (-3,2){};
        \node[vertex] (w00) at (-1,2){};
        \node[vertex1] (w1) at (1,2){};
        \node[vertex1] (w2) at (3,2){};
        \node[vertex] (v3) at (-1.5,-0.5){};        
        \node[vertex1] (v2) at (0,-0.5){};        
        \node[vertex1] (v1) at (1.5,-0.5){};        
        \node[vertex] (v) at (3,-0.5){};        
        \draw (v3)--(w0)--(x')
        (v2)--(w00)--(x0)--(u)--(x)--(w2)
        (x0)--(w1);
        \node[] (wup) at (2,2.6){};
        \node[] (wdo) at (2,1.05){};
        \draw (wup)--(x')
        (v)--(wdo)--(v1);
        \draw[dashed, thick, color=red] (v2)--(v1)
        (w1)--(w2);
        \draw[dashed, thick] (v3) to[out=30,in=150] (v1);
        \node[] at (3.5,3){$\sigma(w')$};
        \draw (2,1.75) ellipse (2 and .9);
        \node[] at (0,-2.25){$\sigma(v')$};
        \draw (0,-0.75) ellipse (2.25 and 1);
        \node[] at (1.5,6){$u$};
        \node[] at (-2.5,4){$x'$};
        \node[] at (0.5,4){$x$};        
        \node[] at (1,1.5){$\tilde w'_1$};
        \node[] at (3,1.5){$\tilde w'_2$};
        \node[] at (-1.5,-1){$\tilde{v}'_3$};        
        \node[] at (0,-1){$\tilde{v}'_2$};        
        \node[] at (1.5,-1){$\tilde{v}'_1$};        
        \node[] at (3.5,-0.5){$v$};        
\end{tikzpicture}
\end{center}
    \subcaption{Case 2: $\tilde v'_1 \stackrel{m}{\sim} \tilde v'_2$ and $\tilde v'_1 \stackrel{m}{\sim} \tilde v'_3$}\label{fig:count_snakes2}
\end{minipage}
    \caption{Illustration to accompany the proof of \refL{lem:count_snakes}. We have omitted the simple case covered at the start of the proof in which $\tilde v'_1, \tilde v'_2, \tilde v'_3$ share a grandparent in $\tilde G_n$ before the merging stage in generation $n$; notice that in both cases above $x \notin \tilde \alpha_2(v'_3)$.}
    \label{fig:count_snakes}
\end{figure}

\noindent \emph{Case 2: $\tilde v'_1 \stackrel{m}{\sim} \tilde v'_3$.} 
Similarly to case 1, $x \notin \tilde \alpha_2(\tilde v'_3)$ and $\tilde v'_1 \stackrel{m}{\sim} \tilde v'_3$ together imply we may take $x' \in \tilde \alpha_2(\tilde v'_1)\cap \tilde \alpha_2(\tilde v'_3)$ with $x'\neq x$. But now, since $\tilde \alpha_1(\tilde v'_1)=\{w'\}$, we have $x, x' \in \alpha_1(w')$ and so $|\sigma(w')|\geq 2$. Let $\tilde w'_1 \in \sigma(w')$ be an offspring of $x$ before merging, i.e.\ $\tilde \alpha_1(\tilde w'_1)=\{x\}$, and let $\tilde w'_2\in \sigma(w')$ be such that $\tilde w'_1 \stackrel{m}{\sim} \tilde w'_2$.

Now we take $u \in \tilde \alpha_2(\tilde w'_1) \cap \tilde \alpha_2(\tilde w'_2)$, and it remains only to show that $u$ satisfies the claims of the lemma. First note that $\{x\} = \tilde\alpha_1(\tilde w'_1)$ implies $u \in \alpha_1(x)$ and hence $u \in \alpha_3(v)$. 
Moreover, since $u\in \alpha_1(x)$ we have $u\in \tilde \alpha (\tilde v'_1)\cap \tilde \alpha(\tilde v'_2)$.
Consider $\tilde w'_1, \tilde w'_2,\tilde v'_1, \tilde v'_2$. We have $\tilde w'_1 \stackrel{m}{\sim} \tilde w'_2$, $\tilde v'_1 \stackrel{m}{\sim} \tilde v'_2$, $u \in \tilde\alpha(\tilde w'_1)\cap\tilde\alpha(\tilde w'_2)\cap\tilde\alpha(\tilde v'_1)\cap\tilde\alpha(\tilde v'_2)$, $\tilde w'_1, \tilde w'_2 \in \tilde I_{n-1}$ and $\tilde v'_1, \tilde v'_2 \in \tilde I_n$, satisfying the claims of the lemma.
\end{proof}

To control distant relatives' contribution to the amount of deletion that occurs in the construction of the graph process, we bound the expectation of the number of individuals that have a distant relative; this will allow us to show that deletion due to distant relatives makes a negligible contribution when $q$ is small. For each $n\ge 1$, let $\cD_n\subseteq I_n$ be the set of vertices that have a distant relative, i.e.
\begin{equation} \label{eq:cDdefn}
    \cD_n :=\{v\in I_n : k^{(r)}_v>0\}.
\end{equation}
In the proof of \refL{lem:EM'} below we will need to bound pairs of siblings in which one sibling has a distant relative. Let
\begin{align} \label{eq:Relnsibldefn}
D_n^{s}:= |\{(v,v'):\; v'\in I_n, \, v\in J^{(1)}_n\cap \cD_n, \, v \sibl v'\}|;    
\end{align}
the constraint $v\in J^{(1)}_n$ will allow us to apply \refL{lem:count_snakes} to obtain \eqref{eq:Eksibl} below. Finally, let $D_n:=|\cD_n|$.
\begin{lem}\label{lem:ERel}
There exists $C_3>0$ such that for $p,q\in [0,1]$ and $n\ge 3$,
\begin{align}
\E{D_n}
&\le C_3 q^2\E{Z_{n-3}} \label{eq:ERel}\\
    \text{and }\quad  \E{D_n^s}
    &\le     C_3 q^2\E{Z_{n-3}}. \label{eq:Eksibl}
\end{align}
\end{lem}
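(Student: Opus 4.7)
The plan is to use Lemma~\ref{lem:count_snakes} to associate each $v\in\cD_n$ with a great-grandparent $u\in I_{n-3}$ and a configuration of two mergers among tilde-descendants of $u$; the two independent merger events each contribute a Bernoulli$(q)$ factor, while the count of configurations is controlled by Lemma~\ref{lem:ENbar}.

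First, I argue that the two mergers in Lemma~\ref{lem:count_snakes} are conditionally independent. In Case~(i), since $\tilde a,\tilde b,\tilde d$ are distinct, the unordered pairs $\{\tilde a,\tilde b\}$ and $\{\tilde c,\tilde d\}$ differ, so the two mergers use distinct $\mu$-variables in $\tilde I_n$; in Case~(ii) the mergers lie in different generations and are again independent. Conditionally on $\widetilde{\cF}_n$, the joint probability of both mergers is therefore at most $q^2$.

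Next, I count structural $4$-tuples (ignoring the merger constraints) with common tilde-ancestor $u\in I_{n-3}$. In Case~(i), each $\tilde a\in\tilde I_n$ has a unique parent in $I_{n-1}$, so the count is obtained by picking an ordered $4$-tuple of $I_{n-1}$-descendants of $u$ at distance~$2$ (counted by $\overline{N}^{(2,2,2,2)}_{n-3}$) and multiplying by an offspring factor. Conditioning on $\cF_{n-1}$ and using Jensen's inequality, the offspring factor has conditional expectation at most $\E{\xi^4}$ with $\xi\sim\Poi{1+p}$; applying Lemma~\ref{lem:ENbar} then bounds the expected count by $\E{\xi^4}^3\E{Z_{n-3}}$, which is $O(\E{Z_{n-3}})$ since $p\le 1$. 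Case~(ii) is similar, splitting into pairs in $\tilde I_{n-1}$ and $\tilde I_n$, again both descending from $u$. Combined with the $q^2$ factor, the expected number of configurations is at most $Cq^2\E{Z_{n-3}}$.

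Finally, I pass from configurations to $D_n$ (and $D_n^s$). By the proof of Lemma~\ref{lem:count_snakes}, each $v\in\cD_n$ shares a parent $w'\in I_{n-1}$ with the merged vertex whose $\sigma$-class contains $\tilde c$ and $\tilde d$; hence $v$ is among the offspring of $w'$. Over-counting by summing over pairs (configuration, $v$) with $v$ an offspring of $w'$, the extra conditional-mean factor per configuration is $1+p\le 2$, giving $\E{D_n}\le C_3 q^2\E{Z_{n-3}}$. For $\E{D_n^s}$ one further sum over $v''\in I_n$ with $v''\sibl v$ is needed; since $v\in J^{(1)}_n$ has a unique parent $w$, $v''$ is an offspring of $w$, contributing another bounded conditional-mean factor $1+p$. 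The main obstacle is this last step: configurations may correspond to several $v$'s (and pairs $(v,v'')$), but we avoid an unbounded random over-count because the relevant parent is pinned down either by the configuration or by $v$ itself via $v\in J^{(1)}_n$, so each over-count is controlled by a single offspring count with bounded conditional mean.
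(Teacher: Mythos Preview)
Your overall strategy matches the paper's: use Lemma~\ref{lem:count_snakes} to extract two merger events (hence a factor $q^2$) tied to a common great-grandparent $u\in I_{n-3}$, then control the combinatorial count via Lemma~\ref{lem:ENbar}. The paper, however, handles the passage from configurations to $D_n$ and $D_n^s$ more crudely and thereby more robustly: rather than pinning $v$ (and its sibling) to the offspring of a specific parent $w'$ read off from the proof of Lemma~\ref{lem:count_snakes}, it simply uses that $v\in I^u_3$ and bounds
\[
D_n\le\sum_{u\in I_{n-3}}|I^u_3|\,\I{u\in A_1\cup A_2},\qquad
D_n^s\le\sum_{u\in I_{n-3}}|I^u_3|(|I^u_3|-1)\,\I{u\in A_1\cup A_2},
\]
then estimates $\E{|I^u_3|^2\,\I{u\in A_1\cup A_2}\mid\cF_{n-3}}$ by iterated conditioning. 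This sidesteps two issues in your write-up.

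First, your claim that ``the extra conditional-mean factor per configuration is $1+p$'' is not justified: once the configuration is fixed, $w'$ is the parent of one of $\tilde c,\tilde d$, so $|I_1^{w'}|$ is \emph{not} conditionally independent of the configuration count (which already contains a factor $|\cC_{w'}|$ from the choice of $\tilde c$). The correct contribution involves a second moment of $\xi_{w'}$, not a first moment; this is easily repaired via Jensen, but as written it is a gap. Second, in Case~(ii) the merger in $\tilde I_{n-1}$ is already $\widetilde{\cF}_n$-measurable, so conditioning on $\widetilde{\cF}_n$ alone yields only one factor of $q$; you must condition in two stages (first on $\widetilde{\cF}_n$ for the generation-$n$ merger, then on $\widetilde{\cF}_{n-1}$ for the generation-$(n-1)$ merger), exactly as the paper does when treating $A_2$. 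The paper's coarser replacement of your ``offspring of $w'$'' by $|I^u_3|$ avoids the first issue entirely and makes the two-stage conditioning clean.
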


\begin{proof}
Let $A_1\subseteq I_{n-3}$ denote the set of individuals such that at least two pairs of descendants in $\tilde I_n$ merge, i.e.
$$
A_1 :=\{u\in I_{n-3}: \exists \tilde v_1, \tilde v_2, \tilde v_3, \tilde v_4 \in \tilde I_n, \tilde v_1, \tilde v_2, \tilde v_4 \text{ distinct s.t.~}u\in \cap_{i=1}^4 \tilde \alpha(\tilde v_i), \tilde v_1 \stackrel{m}{\sim} \tilde v_2, \tilde v_3 \stackrel{m}{\sim} \tilde v_4\}.
$$
Let $A_2\subseteq I_{n-3}$ denote the set of individuals such that at least one pair of descendants in $\tilde I_n$ and at least one pair of descendants in $\tilde I_{n-1}$ merge, i.e.
\begin{align*}
A_2 &:=\{u\in I_{n-3}: \exists \tilde v_1, \tilde v_2 \in \tilde I_n, \tilde w_1, \tilde w_2 \in \tilde I_{n-1} \text{ s.t.~}u\in \tilde \alpha(\tilde v_1)\cap \tilde \alpha(\tilde v_2)\cap\tilde \alpha(\tilde w_1)\cap\tilde \alpha(\tilde w_2), \\
&\hspace{11.5cm} \tilde v_1 \stackrel{m}{\sim} \tilde v_2, \tilde w_1 \stackrel{m}{\sim} \tilde w_2\}.
\end{align*}
By \refL{lem:count_snakes}, if $v\in I_n$ has a distant relative (i.e.~if $k^{(r)}_v>0$) then it has an ancestor $u\in \alpha_3(v)\cap (A_1\cup A_2)$. 
Therefore, recalling the definition of $I^u_m$ in~\eqref{eq:Inudefn},
\begin{align} \label{eq:reln}
   D_n= \sum_{v\in I_n} \I{v\in \cD_n} \le \sum_{u\in I_{n-3}} |I^u_3| \I{u\in A_1\cup A_2}.
   \end{align}
 Moreover, for a pair of siblings $v,v'\in I_n$ with $v\sibl v'$ and $v\in \cD_n\cap J_n^{(1)}$, $v$ must have an ancestor $u\in \alpha_3(v)\cap (A_1\cup A_2)$, and since $v$ has a unique parent, we must also have that $u\in \alpha_3(v')$. Therefore
   \begin{align} \label{eq:relnsib}
   D_n^s=\sum_{v,v'\in I_n} \I{v\in \cD_n\cap J_n^{(1)}}\I{v\sibl v'} \le \sum_{u\in I_{n-3}} |I^u_3|(|I^u_3|-1) \I{u\in A_1\cup A_2}.
\end{align}
We will now bound the expectation of $|I^u_3|^2 \I{u\in A_1\cup A_2}$ for $u\in I_{n-3}$, conditional on $\cF_{n-3}$.
Note first that by~\eqref{eq:ItildeIbd}, we have $|I^u_3|\le |\tilde I^u_3|$.
Then by the definition of $A_1$ and a union bound,
\begin{align} 
    \E{|\tilde I^u_3|^2 \I{u\in A_1} \Big| \tilde{\cF}_{n}}&\le 
    |\tilde I^u_3|^2 \sum_{\tilde v_1, \ldots, \tilde v_4 \in \tilde I^u_3, \tilde v_1, \tilde v_2, \tilde v_4 \text{ distinct}}\E{\left. \I{\tilde v_1 \stackrel{m}{\sim}\tilde v_2,\tilde v_3 \stackrel{m}{\sim}\tilde v_4 } \right| \tilde{\cF}_n } \notag \\
    &\le |\tilde I^u_3|^2 \sum_{\tilde v_1, \ldots, \tilde v_4 \in \tilde I^u_3, \tilde v_1, \tilde v_2, \tilde v_4 \text{ distinct}}\E{\left. \mu_{\{\tilde v_1, \tilde v_2\}}\mu_{\{\tilde v_3, \tilde v_4\}} \right| \tilde{\cF}_n } \notag \\
    &\le q^2 |\tilde I^u_3|^6.
\end{align}
Hence by conditioning on $\tilde{\cF}_n$ and using that $|I^u_3|\le |\tilde I^u_3|$,
\begin{align*} 
    \E{|I^u_3|^2 \I{u\in A_1} \Big| \cF_{n-1}}&\le 
    \E{q^2 |\tilde I^u_3|^6 \Big| \cF_{n-1}}
    \le q^2 \sum_{w_1, \ldots, w_6 \in I^u_2} \E{\left. \prod_{i=1}^6 |\cC_{w_i}|\right| \cF_{n-1}}.
\end{align*}
Let $\xi$ denote a Poisson random variable with mean $1+p$.
For $w_1, \ldots, w_6 \in I^u_2$, we have
\begin{equation} \label{eq:jensenxi}
\E{\left. \prod_{i=1}^6 |\cC_{w_i}|\right| \cF_{n-1}}
\le \E{\left. \prod_{i=1}^6 \xi_{w_i}\right| \cF_{n-1}}
=\E{\prod_{i=1}^6 \xi_{w_i}}\le \E{\xi^6}
\end{equation}
by Jensen's inequality (by the same argument as in~\eqref{eq:useJensen}).
Hence
\begin{equation} \label{eq:K1}
    \E{|I^u_3|^2 \I{u\in A_1} \Big| \cF_{n-1}}\le q^2 \E{\xi^6} |I^u_2|^6.
\end{equation}
Let $A'_2\subseteq I_{n-3}$ denote the set of individuals such that at least one pair of descendants in $\tilde I_{n-1}$ merge, i.e.
\begin{align*}
A'_2 &:=\{u\in I_{n-3}: \exists \tilde w_1, \tilde w_2 \in \tilde I_{n-1} \text{ s.t.~}u\in \tilde \alpha(\tilde w_1)\cap\tilde \alpha(\tilde w_2), \tilde w_1 \stackrel{m}{\sim} \tilde w_2\}.
\end{align*}
Then by the definition of $A_2$ and a union bound,
$$
\E{\left. |\tilde I^u_3|^2 \I{u\in A_2} \right| \tilde{\cF}_n}
\le \I{u\in A'_2} |\tilde I^u_3|^2 \sum_{\tilde v_1 \neq \tilde v_2 \in \tilde I^u_3} \E{\left. \I{\tilde v_1 \stackrel{m}{\sim} \tilde v_2} \right| \tilde{\cF}_n }
\le q \I{u\in A'_2} |\tilde I^u_3|^4.
$$
Hence by conditioning on $\tilde {\cF}_n$, and since $|I^u_3|\le |\tilde I^u_3|$,
\begin{align*}
    \E{|I^u_3|^2 \I{u\in A_2} \Big| \cF_{n-1}}&\le q \I{u\in A'_2} \sum_{w_1,\ldots, w_4 \in I^u_2} \E{\left. \prod_{i=1}^4 |\cC_{w_i}| \right| \cF_{n-1}}\\
    &\le q \I{u\in A'_2}\E{\xi^4}|I^u_2|^4,
\end{align*}
by Jensen's inequality and the same argument as in~\eqref{eq:jensenxi}. 
Then by conditioning on $\cF_{n-1}$, and since $|I^u_2|\le |\tilde I^u_2|$ by~\eqref{eq:ItildeIbd}, we have
\begin{align*}
    \E{|I^u_3|^2 \I{u\in A_2} \Big| \tilde{\cF}_{n-1}}
    &\le q \E{\xi^4} |\tilde I^u_2|^4 \E{\I{u\in A'_2} \Big| \tilde{\cF}_{n-1}}\\
    &\le q \E{\xi^4} |\tilde I^u_2|^4 \sum_{\tilde w_1 \neq \tilde w_2 \in \tilde I^u_2} \E{\left. \I{\tilde w_1 \stackrel{m}{\sim}\tilde w_2} \right| \tilde{\cF}_{n-1}}\\
    &\le q^2 \E{\xi^4} |\tilde I^u_2|^6.
\end{align*}
Therefore by conditioning on $\tilde{\cF}_{n-1}$,
\begin{align*}
    \E{|I^u_3|^2 \I{u\in A_2} \Big| \cF_{n-2}}&\le q^2 \E{\xi^4}
    \sum_{x_1, \ldots, x_6 \in I^u_1}\E{\left. \prod_{i=1}^6 |\cC_{x_i}| \right| \cF_{n-2}}\\
    &\le q^2 \E{\xi^4}\E{\xi^6}|I^u_1|^6
\end{align*}
by Jensen's inequality (as in~\eqref{eq:jensenxi}).
Hence by~\eqref{eq:K1},
\begin{align*}
    &\E{|I^u_3|^2 \I{u\in A_1 \cup A_2} \Big| \cF_{n-3}}\\
    &\le q^2 \Big( \E{\xi^6}\E{|I^u_2|^6 \Big| \cF_{n-3}}+\E{\xi^4}\E{\xi^6}\E{|I^u_1|^6\Big| \cF_{n-3}}\Big)\\
    &= q^2 \Big( \E{\xi^6}\E{\overline{N}^{(2,2,2,2,2,2)}(u) \Big| \cF_{n-3}}+\E{\xi^4}\E{\xi^6}\E{\overline{N}^{(1,1,1,1,1,1)}(u)\Big| \cF_{n-3}}\Big),
\end{align*}
by the definition of $\overline N^{(j_1,\ldots, j_k)}(w)$ in~\eqref{eq:barNnumdefn}.
Therefore, summing over $I_{n-3}$,
$$
\E{\sum_{u\in I_{n-3}} |I^u_3|^2 \I{u\in A_1 \cup A_2} }
\le q^2 \Big( \E{\xi^6}\E{\overline{N}_{n-3}^{(2,2,2,2,2,2)}}+\E{\xi^4}\E{\xi^6}\E{\overline{N}_{n-3}^{(1,1,1,1,1,1)}}\Big),
$$
and the result follows by~\eqref{eq:reln},~\eqref{eq:relnsib} and \refL{lem:ENbar}.
\end{proof}

\section{Lower bounds on sums restricted to $\tilde J_n^{(2)}$ and $J_n^{(2)}$}\label{sec:lower}

In this section, we prove lower bounds on the expectations of $\widetilde M_n$ and $\sum_{u\in J^{(2)}_n}k_u^{(a)}$, which will be used in Section~\ref{sec:mainproofs} to give lower bounds on the expectations of the number of mergers and deletions in each generation.

The following decomposition and lower bound will be used several times. 
Recall~\eqref{eq:kug}-\eqref{eq:kur}, and recall that $k_u=k^{(g)}_u+k^{(a)}_u+k^{(r)}_u$.
For any $n\ge 0$, $u\in I_n$ and $c\in \N$, using that $(1-q)^a\ge 1-aq$ for $a\in \N\cup \{0\}$, 
\begin{align}
    (1-q)^{c k_u}&=(1-q)^{c(k_u^{(g)}+k_u^{(a)})}-(1-q)^{c(k_u^{(g)}+k_u^{(a)})}\left(1-(1-q)^{c k_u^{(r)}}\right) \nonumber\\
    &\ge 1-qc(k_u^{(g)}+k_u^{(a)})-\I{k_u^{(r)}\neq 0} \nonumber\\
    &= 1-qc(k_u^{(g)}+k_u^{(a)})-\I{u\in \cD_{n}} \label{eq:k2}    
\end{align}
by the definition of $\cD_n$ in~\eqref{eq:cDdefn}.
For $n\ge 1$, let
\begin{equation} \label{eq:Sndefn}
  S_{n}:=|\{(v,v')\in J_{n}^{(1)}\times J_{n}^{(1)}:\, v\sibl v'\}|,
\end{equation}
the number of ordered pairs of siblings in the $n$th generation, each of which has only one parent.
In order to prove a lower bound on $\E{\widetilde{M}_n}$ in Lemma~\ref{lem:EM'} below, we need a lower bound on $\E{S_{n-1}}$.
\begin{lem}\label{lem:ES}
There exists $C_4>0$ such that for $p,q\in [0,1]$ and $n\ge 4$,  
\begin{align}
    \E{S_n}&\ge (1+p)^2\E{Z_{n-1}} -C_4 q\max_{2\le k\le 4} \E{Z_{n-k}}. \label{eq:ES}
\end{align}
\end{lem}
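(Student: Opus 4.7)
The plan is to introduce the pre-merger analogue
\begin{align*}
    \tilde S_n := \sum_{u \in I_{n-1}} |\cC_u|(|\cC_u|-1),
\end{align*}
which counts ordered pairs of distinct offspring in $\tilde I_n$ sharing a parent in $I_{n-1}$, and then bound $\E{\tilde S_n}$ from below while controlling the gap $\tilde S_n - S_n$ arising from mergers at level $n$. Since any relation $\tilde v_1 \stackrel{m}{\sim} \tilde v_2$ requires the two parties to have different parents, a vertex $v \in I_n$ lies in $J^{(1)}_n$ iff $|\sigma(v)| = 1$. Consequently a pair $(i,j) \in \cC_u^2$ with $i \ne j$ contributes to $S_n$ precisely when both $ui$ and $uj$ survive the merging step as singleton classes, and the elementary inequality $\I{\pi(ui) = ui,\,\pi(uj) = uj} \ge 1 - \I{\pi(ui) \ne ui} - \I{\pi(uj) \ne uj}$ yields
\begin{align*}
S_n \ge \tilde S_n - 2\sum_{u \in I_{n-1}} (|\cC_u| - 1) \sum_{i \in \cC_u} \I{\pi(ui) \ne ui}.
\end{align*}

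Next I would lower-bound $\E{\tilde S_n}$. Conditional on $\cF_{n-1}$, Poisson thinning gives $|\cC_u| \sim \Poi{(1+p)(1-q)^{k_u}}$, so that $\E{|\cC_u|(|\cC_u|-1) \mid \cF_{n-1}} = (1+p)^2 (1-q)^{2k_u}$. Applying~\eqref{eq:k2} with $c=2$, summing over $u \in I_{n-1}$ and taking expectations gives
\begin{align*}
\E{\tilde S_n} \ge (1+p)^2 \E{Z_{n-1}} - 2q(1+p)^2 \E{\sum_{u \in I_{n-1}} (k_u^{(g)}+k_u^{(a)})} - (1+p)^2 \E{D_{n-1}},
\end{align*}
and \refL{lem:kga} and \refL{lem:ERel} (both applied at index $n-1 \ge 3$, which is where the hypothesis $n \ge 4$ enters) bound the two error terms by $O(q \max_{3 \le k \le 4} \E{Z_{n-k}})$ and $O(q^2 \E{Z_{n-4}})$ respectively.

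Finally I would handle the merger correction. If $\pi(ui) \ne ui$ then $ui$ merges with some cousin $vj$ satisfying $v \ne u$ and $d_{\tilde G_n}(ui, vj) = 4$; the latter forces $d_{G_{n-1}}(u, v) = 2$, i.e., $u \sibl v$. Bounding $\I{\pi(ui) \ne ui} \le \sum_{v \sibl u} \sum_{j \in \cC_v} \mu_{\{ui, vj\}}$, using that the $\mu$-variables at level $n$ are independent of $\widetilde{\cF}_n$ with mean $q$, and then exploiting the conditional independence of $|\cC_u|$ and $|\cC_v|$ given $\cF_{n-1}$ for $u \ne v$ (each factor contributing at most $(1+p)^2$ and $(1+p)$ respectively), iterated expectations give
\begin{align*}
\E{\tilde S_n - S_n} \le 2q(1+p)^3 \E{\,|\{(u,v) \in I_{n-1}^2 : u \ne v,\, d_{G_{n-1}}(u, v) = 2\}|\,} \le 2q(1+p)^5 \E{Z_{n-3}},
\end{align*}
where in the last step each ordered sibling pair in $I_{n-1}$ is counted at least once in $N^{(1,1)}_{n-2}$ and \refL{lem:EN} is applied. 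Assembling the two estimates and using $p, q \in [0,1]$ together with $q^2 \le q$ produces the claim for any sufficiently large constant $C_4$.

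The main subtlety is the last step: two siblings in $I_{n-1}$ may share more than one parent (because of mergers in earlier generations), so a direct identification of the ordered sibling count with a product-type expectation is unavailable; the proxy $N^{(1,1)}_{n-2}$, which exactly overcounts by the multiplicity of shared parents, is a clean way around this and is tailor-made for \refL{lem:EN}.
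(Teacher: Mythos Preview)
Your approach is the same as the paper's: introduce the pre-merger count $\tilde S_n=\sum_{u}|\cC_u|(|\cC_u|-1)$, lower-bound $\E{\tilde S_n}$ via \eqref{eq:k2} together with Lemmas~\ref{lem:kga} and~\ref{lem:ERel} at index $n-1\ge 3$, and control the merger loss through $N^{(1,1)}_{n-2}$ and Lemma~\ref{lem:EN}.

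There is one genuine slip. The condition ``$ui$ is a singleton class'' is $|\sigma(ui)|=1$, \emph{not} $\pi(ui)=ui$: the latter only says that $ui$ is the lexicographic minimum of its class. With the indicator $\I{\pi(ui)\ne ui}$ your displayed lower bound on $S_n$ is in fact false. For instance, take siblings $u<u'$ in $I_{n-1}$ with $\cC_u=\{1,2,3\}$, $\cC_{u'}=\{1\}$ and a single merger $u1\stackrel{m}{\sim}u'1$; then all three indicators $\I{\pi(ui)\ne ui}$ vanish, so your correction term is $0$ and the bound claims a contribution of $6$ to $S_n$ from $u$, whereas the actual contribution is $2$ (only the pair $(u2,u3)$ and its reverse lie in $J^{(1)}_n\times J^{(1)}_n$). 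Replace the indicator by $\I{|\sigma(ui)|>1}$ throughout --- which is exactly what your verbal description ``survive as singleton classes'' means --- and the argument is correct; the subsequent domination $\I{|\sigma(ui)|>1}\le \sum_{v\sibl u}\sum_{j\in\cC_v}\mu_{\{ui,vj\}}$ holds for this indicator as well, and the rest goes through unchanged. One further typo: Lemma~\ref{lem:EN} applied to $N^{(1,1)}_{n-2}$ yields $(1+p)^2\E{Z_{n-2}}$, not $\E{Z_{n-3}}$; this is harmless since both appear in $\max_{2\le k\le 4}\E{Z_{n-k}}$.
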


\begin{proof}
Note first that by our construction in Section~\ref{subsec:UlamHarris}, for $u\in I_{n-1}$ and $i\neq i'\in \cC_u$, the pair $(ui,u i')$ contributes to $S_n$ unless there exists $\tilde v\in \tilde I_n$ such that $ui \stackrel{m}{\sim} \tilde v$ or $ui' \stackrel{m}{\sim} \tilde v$.
Hence
\begin{equation} \label{eq:Sn1stlower}
    S_n \ge \sum_{u\in I_{n-1}} |\cC_u| (|\cC_u|-1)
    -\sum_{u\in I_{n-1}}2|\{(i,i',\tilde v)\in \cC_u\times \cC_u \times \tilde I_n : i\neq i', ui \stackrel{m}{\sim}\tilde v\}|.
\end{equation}
We now need an upper bound on the second term on the right hand side.
For $u\in I_{n-1}$ and $i,i'\in \cC_u$, $\tilde v \in \tilde I_n$, if $ui\stackrel{m}{\sim}\tilde v$ then we must have $\alpha_1(u)\cap \tilde \alpha_2(\tilde v) \neq \emptyset$ and $u\notin \tilde \alpha_1(\tilde v)$. Therefore
\begin{align*}
    \sum_{u\in I_{n-1}}|\{(i,i',\tilde v)\in \cC_u\times \cC_u \times \tilde I_n : i\neq i', ui \stackrel{m}{\sim}\tilde v\}|
    &\le \sum_{w\in I_{n-2}}\sum_{u\neq u' \in I^w_1} \sum_{i\neq i'\in \cC_u, \tilde v\in \cC_{u'}}\mu_{\{ui,\tilde v\}}.
\end{align*}
Therefore, by conditioning first on $\tilde{\cF}_n$ and then on $\cF_{n-1}$, and since, for $u\neq u'\in I_{n-1}$, we have
$\E{|\cC_u|(|\cC_u|-1)|\cC_{u'}| | \cF_{n-1}}\le \E{\xi_u(\xi_u-1)\xi_{u'} | \cF_{n-1}}=(1+p)^3$,
\begin{align*}
    \E{\sum_{u\in I_{n-1}}|\{(i,i',\tilde v)\in \cC_u\times \cC_u \times \tilde I_n : i\neq i', ui \stackrel{m}{\sim}\tilde v\}|}
    &\le q(1+p)^3 \E{\sum_{w\in I_{n-2}}|I^w_1|(|I^w_1|-1)}\\
    &= q(1+p)^3 \E{N^{(1,1)}_{n-2}}\\
    &\le q(1+p)^5 \E{Z_{n-2}},
\end{align*}
where the second line follows by the definition of $N^{(j_1,\ldots, j_k)}_n$ in~\eqref{eq:Nnumdefns}, and the last line follows by Lemma~\ref{lem:EN}.

Recall from our construction in Section~\ref{subsec:UlamHarris} that conditional on $\cF_{n-1}$, for $u\in I_{n-1}$, $|\mathcal C_u|$ is Poisson distributed with mean $(1+p)(1-q)^{k_u}$. It follows from~\eqref{eq:Sn1stlower} that
\begin{align*}
    \E{S_n}
    &\ge \E{\sum_{u\in I_{n-1}} \E{|\mathcal C_u|(|\mathcal C_u|-1)|\cF_{n-1}}}-2q(1+p)^5\E{Z_{n-2}}\\
    &\ge (1+p)^2\E{\sum_{u\in I_{n-1}} (1-q)^{2k_u}}-64q\E{Z_{n-2}}
\end{align*}
since $p\le 1$.
Therefore it suffices to show that there exists a constant $C'>0$ such that 
\begin{align} \label{eq:Snclaim}
    \E{\sum_{u\in I_{n-1}} (1-q)^{2k_u}}\ge  \E{Z_{n-1}} -C'q\max_{2\le k\le 4} \E{Z_{n-k}}.
\end{align}
Since $|I_{n-1}|=Z_{n-1}$ and $|\cD_{n-1}|=D_{n-1}$, we obtain from \eqref{eq:k2} with $c=2$ that
\begin{align*}
   \E{\sum_{u\in I_{n-1}} (1-q)^{2k_u}}
   &\ge \E{Z_{n-1}}-\E{D_{n-1}}-2q \E{\sum_{u\in I_{n-1}}(k_u^{(a)}+k_u^{(g)})}\\
   &\ge \E{Z_{n-1}}-C_3 q^2\E{Z_{n-4}}-2q(1+p)^3(\E{Z_{n-3}}+\E{Z_{n-4}}),
\end{align*}
where in the last inequality we use~\eqref{eq:ERel} from \refL{lem:ERel} and~\eqref{eq:kga} from Lemma~\ref{lem:kga} (using that $n\ge 4$); this establishes~\eqref{eq:Snclaim} and completes the proof.
\end{proof}
We can now prove a lower bound on the expectation of $\widetilde M_n$; using the same argument, we establish a lower bound on the expectation of another quantity, which will be used in the proof of Lemma~\ref{lem:ES21} below.
\begin{lem}\label{lem:EM'}
There exists $C_5>0$ such that for $p,q\in [0,1]$ and $n\ge 5$,
\begin{align}
\E{\widetilde M_n} &\ge \frac{q}{2}\E{Z_{n-2}} -C_5 q^2\max_{3\le k\le 5} \E{Z_{n-k}} \label{eq:lemEM'1}\\
\text{and }\quad \E{\sum_{\{(v,v')\in J^{(1)}_{n-1}\times J^{(1)}_{n-1}: v\sibl v'\}}|\mathcal C_v|}&\ge \E{Z_{n-2}} -C_5 q\max_{3\le k\le 5} \E{Z_{n-k}}. \label{eq:lemEM'2}
\end{align}
\end{lem}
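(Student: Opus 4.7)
The plan is to express $\widetilde M_n$ and the left-hand side of~\eqref{eq:lemEM'2} as sums over ordered sibling pairs in $J_{n-1}^{(1)}$, apply~\eqref{eq:k2} to the resulting $(1-q)^{k_w}$ factors, and then invoke \refL{lem:ES}, \refL{lem:kga}, and \refL{lem:ERel}. The main obstacle throughout is purely the bookkeeping: tracking the conditioning levels $\widetilde{\cF}_n \supset \cF_{n-1}$, recognising that $\mu_{\{wi,w'i'\}}$ for generation-$n$ pairs is independent of $\widetilde{\cF}_n$, and verifying that the restriction to $J_{n-1}^{(1)}$ is precisely what allows \eqref{eq:kgasib} and \eqref{eq:Eksibl} to be applied; no new ideas are needed. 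The hypothesis $n\ge 5$ is exactly what is required so that $n-1\ge 4$ for \refL{lem:ES} and $n-1\ge 3$ for \refL{lem:ERel}.

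For~\eqref{eq:lemEM'1}, I would first observe that $u \in \tilde J_n^{(2)}$ is equivalent to $u=wi$ with $w\in J_{n-1}^{(1)}$ and $i \in \cC_w$, while $d_{\tilde G_n}(wi, w'i')=4$ is equivalent to $w \sibl w'$. Hence
\[2\widetilde M_n = \sum_{\{(w,w')\in J_{n-1}^{(1)}\times J_{n-1}^{(1)}:\, w\sibl w'\}} \sum_{i\in \cC_w,\, i'\in \cC_{w'}} \mu_{\{wi, w'i'\}}.\]
Conditioning first on $\widetilde{\cF}_n$ (the generation-$n$ merger indicators are Bernoulli$(q)$ and independent of $\widetilde{\cF}_n$) and then on $\cF_{n-1}$ (for $w\neq w'\in I_{n-1}$, $|\cC_w|$ and $|\cC_{w'}|$ are conditionally independent Poisson random variables with means $(1+p)(1-q)^{k_w}$ and $(1+p)(1-q)^{k_{w'}}$) yields
\[2\E{\widetilde M_n} = q(1+p)^2 \E{\sum_{\{(w,w')\in J_{n-1}^{(1)}\times J_{n-1}^{(1)}:\, w\sibl w'\}} (1-q)^{k_w+k_{w'}}}.\]
Then I would lower-bound $(1-q)^{k_w+k_{w'}}=(1-q)^A(1-q)^B$ with $A=k_w^{(g)}+k_w^{(a)}+k_{w'}^{(g)}+k_{w'}^{(a)}$ and $B=k_w^{(r)}+k_{w'}^{(r)}$, using Bernoulli's inequality on $(1-q)^A$ and $1-(1-q)^B\le \I{w\in \cD_{n-1}}+\I{w'\in \cD_{n-1}}$, giving the two-vertex analogue of~\eqref{eq:k2}:
\[(1-q)^{k_w+k_{w'}}\ge 1-qA-\I{w\in \cD_{n-1}}-\I{w'\in \cD_{n-1}}.\]
Summing and taking expectations, the constant term gives $\E{S_{n-1}}$, bounded below by~\eqref{eq:ES}; the $A$-term is handled by two applications of~\eqref{eq:kgasib} (one in each orientation of the pair); and each indicator sum is dominated by $\E{D_{n-1}^s}$ via~\eqref{eq:Eksibl}. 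This produces a leading term $\frac{q}{2}(1+p)^4\E{Z_{n-2}}\ge \frac{q}{2}\E{Z_{n-2}}$ plus an $O(q^2)\max_{3\le k\le 5}\E{Z_{n-k}}$ error, as required.

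For~\eqref{eq:lemEM'2}, conditioning on $\cF_{n-1}$ and using $\E{|\cC_v|\mid \cF_{n-1}}=(1+p)(1-q)^{k_v}$ turns the left-hand side into $(1+p)\E{\sum(1-q)^{k_v}}$ over the same set of sibling pairs. A single application of~\eqref{eq:k2} with $c=1$ followed by the same three lemmas yields a leading term $(1+p)^3\E{Z_{n-2}}\ge \E{Z_{n-2}}$ and an $O(q)\max_{3\le k\le 5}\E{Z_{n-k}}$ error, completing the proof.
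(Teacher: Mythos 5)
Your proposal is correct and follows essentially the same route as the paper: the same decomposition of $\widetilde M_n$ over ordered sibling pairs in $J^{(1)}_{n-1}$, the same two-stage conditioning on $\widetilde{\cF}_n$ and $\cF_{n-1}$, and the same invocation of Lemma~\ref{lem:ES}, \eqref{eq:kgasib} and \eqref{eq:Eksibl}. The only (harmless) difference is that the paper first notes that siblings in $J^{(1)}_{n-1}$ satisfy $k_v=k_{v'}$ and then applies \eqref{eq:k2} with $c=2$ to $(1-q)^{2k_v}$, whereas you bound $(1-q)^{k_w+k_{w'}}$ directly via a two-vertex analogue of \eqref{eq:k2}; both yield the same leading term and an $O(q^2)$ error.
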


\begin{proof}
Recall from~\eqref{eq:tildeMdefn} that $\widetilde M_n$ counts the number of (unordered) pairs  $\{vi,v'i'\}\subseteq \tilde J_n^{(2)}$ such that $vi\stackrel{m}{\sim} v'i'$, and recall from~\eqref{eq:Jnkdefn} that
$\tilde J_n^{(2)}\subseteq \tilde I_n$ is the set of vertices $vi$ such that $v\in I_{n-1}$ has exactly one parent and $i\in \cC_v$.
Note that a pair $\{vi,v'i'\}$ with $v,v'\in I_{n-1}$, $i\in \cC_v$ and $i'\in \cC_{v'}$ contributes to $\widetilde M_n$ if and only if $v,v'\in J^{(1)}_{n-1}$, $v\sibl v'$ and $\mu_{\{vi,v'i'\}}=1$.
Therefore 
$$
\widetilde M_n = \frac 12 \sum_{\{(v,v')\in J^{(1)}_{n-1}\times J^{(1)}_{n-1}:v\sibl v'\}} \sum_{i\in \cC_v, i'\in \cC_{v'}}\mu_{\{vi,v'i'\}}.
$$
By conditioning on $\tilde{\cF}_n$, it follows that
$$
\E{\widetilde M_n}=\frac{q}{2}\E{\sum_{\{(v,v')\in J^{(1)}_{n-1}\times J^{(1)}_{n-1}:v\sibl v'\}} |\mathcal C_v| \cdot |\mathcal C_{v'}|}.
$$
Recall from our construction in Section~\ref{subsec:UlamHarris} that conditional on $\cF_{n-1}$, $(\cC_v)_{v\in I_{n-1}}$ are independent, and $|\cC_v|$ has Poisson distribution with mean $(1+p)(1-q)^{k_v}$, where $k_v =|\{u\in V_{n-1}:d_{G_{n-1}}(u,v)=3\}|$.
For $v,v'\in J_{n-1}^{(1)}$ with $v\sibl v'$, we infer that $v$ and $v'$ have a shared unique parent $w\in I_{n-2}$, and all paths to $v$ (resp. $v'$) in $G_{n-1}$ go through $w$. This means that for $u\notin \{v,v'\}$, $d_{G_{n-1}}(u,v)=d_{G_{n-1}}(u,w)+1=d_{G_{n-1}}(u,v')$. In particular, $k_v=k_{v'}$ and so $\E{|\mathcal C_v| \cdot |\mathcal C_{v'}||\mathcal F_{n-1}}=(1+p)^2 (1-q)^{2k_v}$. By conditioning on $\cF_{n-1}$, it follows that 
\begin{align} \label{eq:tildeMEsum}
    \E{\widetilde M_n}=\frac{q}{2}(1+p)^2\E{\sum_{\{(v,v')\in J^{(1)}_{n-1}\times J^{(1)}_{n-1}:v\sibl v'\}}(1-q)^{2k_v}}.
\end{align}
Since $\E{|\mathcal C_v| |\mathcal F_{n-1}}=(1+p) (1-q)^{k_v}$ for each $v\in I_{n-1}$, we also have 
\begin{align} \label{eq:sumCvE}
    \E{\sum_{\{(v,v')\in J^{(1)}_{n-1}\times J^{(1)}_{n-1}:v\sibl v'\}}|\mathcal C_v|}=(1+p)\E{\sum_{\{(v,v')\in J^{(1)}_{n-1}\times J^{(1)}_{n-1}:v\sibl v'\}}(1-q)^{k_v}}.
\end{align}
By~\eqref{eq:k2}, and then using the definition of $S_{n-1}$ in~\eqref{eq:Sndefn} and the definition of $D^s_{n-1}$ in~\eqref{eq:Relnsibldefn}, we have that for $m\in \{1,2\}$,
\begin{align*}
    \sum_{\{(v,v')\in J^{(1)}_{n-1}\times J^{(1)}_{n-1}:v\sibl v'\}}(1-q)^{m k_v}&\ge \sum_{\{(v,v')\in J^{(1)}_{n-1}\times J^{(1)}_{n-1}:v\sibl v'\}}(1-mq(k_v^{(g)}+k_v^{(a)}) -\I{v\in \cD_{n-1}})\\
    &\ge S_{n-1}-\sum_{\{(v,v')\in J^{(1)}_{n-1}\times J^{(1)}_{n-1}:v\sibl v'\}} mq(k_v^{(g)}+k_v^{(a)})-D^s_{n-1}.
\end{align*}
Taking expectations, and using \refL{lem:ES}, \eqref{eq:kgasib} in Lemma~\ref{lem:kga} and \eqref{eq:Eksibl} in \refL{lem:ERel}, it follows that for $m\in \{1,2\}$,
\begin{align*}
    &\E{\sum_{\{(v,v')\in J^{(1)}_{n-1}\times J^{(1)}_{n-1}:v\sibl v'\}}(1-q)^{m k_v}}\\&\ge \
    (1+p)^2 \E{Z_{n-2}}-C_4 q \max_{3\le k \le 5}\E{Z_{n-k}}
    -mq(1+p)^4(\E{Z_{n-3}}+\E{Z_{n-4}})-C_3 q^2 \E{Z_{n-4}}\\
    &\ge \E{Z_{n-2}} -(C_4+2^6+C_3)q \max_{3\le k\le 5}\E{Z_{n-k}}.
\end{align*}
By substituting into~\eqref{eq:tildeMEsum} and~\eqref{eq:sumCvE}, this completes the proof.
\end{proof}
We can now use~\eqref{eq:lemEM'2} in Lemma~\ref{lem:EM'} to prove a lower bound that will be used in Section~\ref{sec:mainproofs}.
\begin{lem}\label{lem:ES21}
There exists $C_6>0$ such that for $p,q\in [0,1]$ and $n\ge 5$,
\begin{align*}
    \E{\sum_{u\in J_n^{(2)}} k_u^{(a)}}\ge \E{Z_{n-2}}-C_6 q\max_{2\le k\le 5}\E{Z_{n-k}}.
\end{align*}
\end{lem}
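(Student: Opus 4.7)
The plan is to lower bound $\sum_{u\in J_n^{(2)}} k_u^{(a)}$ by restricting to aunt relations of a particularly clean form: given $(v,v')\in J_{n-1}^{(1)}\times J_{n-1}^{(1)}$ with $v\sibl v'$ and common parent $w\in I_{n-2}$, and given $i\in \cC_{v'}$ with $|\sigma(v'i)|=1$, the vertex $u:=v'i$ lies in $I_n$, satisfies $\alpha_1(u)=\{v'\}$ and $\alpha_2(u)=\{w\}$, so $u\in J_n^{(2)}$; since $v\in J_{n-1}^{(1)}$ with $\alpha_1(v)=\{w\}$ and $v\ne v'$, the pair $(u,v)$ contributes to $k_u^{(a)}$. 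The map $(v,v',i)\mapsto (u,v)$ is injective (given $u$ one recovers $v'$ and $i$), so
\begin{equation*}
\sum_{u\in J_n^{(2)}} k_u^{(a)} \ge \sum_{\{(v,v')\in J_{n-1}^{(1)}\times J_{n-1}^{(1)}:\, v\sibl v'\}} \bigl(|\cC_{v'}| - |\{i\in \cC_{v'}:\, |\sigma(v'i)|\ge 2\}|\bigr).
\end{equation*}

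The expected value of the main ($|\cC_{v'}|$) term is bounded below by~\eqref{eq:lemEM'2} of \refL{lem:EM'}, contributing $\E{Z_{n-2}}-C_5 q\max_{3\le k\le 5}\E{Z_{n-k}}$. For the merger-error, note that $|\sigma(v'i)|\ge 2$ forces the existence of $z\in I_{n-1}$ and $j\in \cC_z$ with $z\sibl v'$ and $v'i\stackrel{m}{\sim} zj$; bounding by $\mu_{\{v'i,zj\}}$ and using that these variables are independent of $\widetilde{\cF}_n$ with mean $q$, the expected error is at most
\begin{equation*}
q\,\E{\sum_{(v,v',z):\, v,v'\in J_{n-1}^{(1)},\, v\sibl v',\, z\sibl v'} |\cC_{v'}|\,|\cC_z|}.
\end{equation*}
Since $v'\in J_{n-1}^{(1)}$ has the unique parent $w$, each such triple consists of three children of $w\in I_{n-2}$, so the number of triples is at most $\overline{N}_{n-2}^{(1,1,1)}$. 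Bounding $|\cC_{v'}||\cC_z|\le \xi_{v'}\xi_z$ and conditioning on $\cF_{n-1}$, the expected error is at most a constant multiple of $q\,\E{\overline{N}_{n-2}^{(1,1,1)}}\le C'q\E{Z_{n-2}}$ via \refL{lem:ENbar}, where the constant depends only on small moments of a Poisson$(1+p)$ variable (bounded for $p\le 1$).

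Combining and absorbing the additional $\E{Z_{n-2}}$ into the maximum over $2\le k\le 5$ yields the claim with $C_6:=C_5+C'$, valid for $n\ge 5$ (where \refL{lem:EM'} applies). The main obstacle is setting up the right injective correspondence in the first step so that the leftover merger contribution cleanly decouples from the aunt count; once this is in place, the error analysis proceeds in direct analogy with the sibling-triple estimates of Sections~\ref{sec:upper}--\ref{sec:relatives}.
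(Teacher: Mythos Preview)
Your argument is correct and is essentially the same as the paper's own proof: both restrict to pairs $(v,v')\in J_{n-1}^{(1)}\times J_{n-1}^{(1)}$ with $v\sibl v'$, observe that each offspring of one sibling that avoids merging yields a vertex in $J_n^{(2)}$ with the other sibling as an aunt, invoke~\eqref{eq:lemEM'2} of \refL{lem:EM'} for the main term, and bound the merger error via $\overline{N}_{n-2}^{(1,1,1)}$ and \refL{lem:ENbar}. The only cosmetic difference is that you swap the roles of $v$ and $v'$ (harmless by symmetry of the sum) and phrase the no-merge condition as $|\sigma(v'i)|=1$ rather than ``no $\tilde u$ with $\tilde u\stackrel{m}{\sim}v'i$'', which are equivalent.
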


\begin{proof}
Note that by the definition of $k^{(a)}_u$ in~\eqref{eq:kua},
$$
\sum_{u\in J^{(2)}_n}k^{(a)}_u = |\{(u,w)\in J^{(2)}_n \times I_{n-1} : \alpha_2(u) \cap \alpha_1(w) \neq \emptyset, w \notin \alpha_1(u)\}|.
$$
By our construction in Section~\ref{subsec:UlamHarris}, for $v,v'\in J^{(1)}_{n-1}$ with $v\sibl v'$ and $i\in \cC_v$, we have that $(vi,v')$ contributes to $\sum_{u\in J^{(2)}_n}k^{(a)}_u$ if there does not exist $\tilde u \in \tilde I_n$ such that $\tilde u \stackrel{m}{\sim} vi$. Therefore
\begin{equation} \label{eq:kua1stlower}
    \sum_{u\in J^{(2)}_n}k^{(a)}_u
    \ge \sum_{\{(v,v')\in J^{(1)}_{n-1}\times J^{(1)}_{n-1}:v\sibl v'\}}|\cC_v|
    -\sum_{\{(v,v')\in J^{(1)}_{n-1}\times J^{(1)}_{n-1}:v\sibl v'\}}|\{(i,\tilde u)\in \cC_v \times \tilde I_n: \tilde u \stackrel{m}{\sim}vi \}|.
\end{equation}
For $v,v'\in J^{(1)}_{n-1}$ with $v\sibl v'$ and $i\in \cC_v$, $\tilde u \in \tilde I_n$, if $\tilde u\stackrel{m}{\sim}vi$ then we must have 
$\tilde \alpha_2(\tilde u)\cap \alpha_1(v)\neq \emptyset$ and $v\notin \tilde \alpha_1(\tilde u)$.
Since $\alpha_1(v)=\alpha_1(v')$, it follows that 
$\tilde \alpha_2(\tilde u)\cap \alpha_1(v)\cap \alpha_1(v')\neq \emptyset$. Therefore
\begin{align*}
   &\E{\sum_{\{(v,v')\in J^{(1)}_{n-1}\times J^{(1)}_{n-1}:v\sibl v'\}}|\{(i,\tilde u)\in \cC_v \times \tilde I_n: \tilde u \stackrel{m}{\sim}vi \}|}\\
   &\le \E{\sum_{w\in I_{n-2}}\sum_{\{(v,v',v'')\in I^w_1\times I^w_1 \times I^w_1: v\neq v''\}}\sum_{i\in \cC_v, i''\in \cC_{v''}}\mu_{\{vi,v''i''\}}}\\
   &=q\E{\sum_{w\in I_{n-2}}\sum_{\{(v,v',v'')\in I^w_1\times I^w_1 \times I^w_1: v\neq v''\}}| \cC_v| |\cC_{v''}|}\\
   &\le q(1+p)^2\E{\sum_{w\in I_{n-2}}|I^w_1|^3 },
\end{align*}
where the third line follows by conditioning on $\tilde{\cF}_n$, and the last line by conditioning on $\cF_{n-1}$ and since for $v\neq v''\in I_{n-1}$, $\E{|\cC_v | \cdot |\cC_{v''}| |\cF_{n-1}}\le (1+p)^2.$
By the definition of $\overline{N}^{(j_1,\ldots, j_k)}_n$ in~\eqref{eq:barNnumdefn}, and then by Lemma~\ref{lem:ENbar},
$$
\E{\sum_{w\in I_{n-2}}|I^w_1|^3 }=\E{\overline{N}^{(1,1,1)}_{n-2}}\le \E{\xi^3} \E{Z_{n-2}},
$$
where $\xi$ is Poisson distributed with mean $1+p$.
By~\eqref{eq:kua1stlower} and by~\eqref{eq:lemEM'2} in Lemma~\ref{lem:EM'}, we now have
$$
\E{\sum_{u\in J^{(2)}_n}k^{(a)}_u}
\ge \E{Z_{n-2}}-C_5 q \max_{3\le k \le 5} \E{Z_{n-k}}-q(1+p)^2 \E{\xi^3}\E{Z_{n-2}},
$$
which completes the proof.
\end{proof}

\section{Proof of Propositions \refand{prop:Zn}{prop:ERn}}\label{sec:mainproofs}
In this section, we use results from Sections~\ref{sec:upper}-\ref{sec:lower} to complete the proof of Propositions \refand{prop:Zn}{prop:ERn}.
Recall the definition of $Y_n$ in~\eqref{eq:Yndefn}, and recall from Lemma~\ref{lem:ERel} that $C_3$ is a positive constant.
We begin by proving upper and lower bounds on $\E{Y_n}$.
\begin{lem}\label{lem:EY}
There exists $C_7>0$ such that for $p,q\in [0,1]$, $n\ge 6$ and $m\ge 4$,
\begin{align}
    \E{Y_n}\le& (1+p-q)\E{Z_{n-1}}-q\E{Z_{n-3}} + C_7 q^2\max_{3\le k\le 6} \E{Z_{n-k}} \label{eq:EYupper}\\
    \text{and}\quad \E{Y_m}\ge& (1+p)\E{Z_{m-1}}-q(1+p)^4\left( \E{Z_{m-3}}+\E{Z_{m-4}}\right)
    -C_3(1+p)q^2\E{Z_{m-4}}.
    \label{eq:EYlower}
\end{align}
\end{lem}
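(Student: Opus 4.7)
The plan is to start from
\[\E{Y_n} = (1+p) \E{\sum_{u\in I_{n-1}} (1-q)^{k_u}},\]
which follows by taking expectations in $Y_n=\sum_{u\in I_{n-1}} |\cC_u|$ and using that, conditional on $\cF_{n-1}$, $|\cC_u|$ is Poisson with mean $(1+p)(1-q)^{k_u}$ (Section~\ref{subsec:UlamHarris}). The two bounds will follow from complementary approximations of $(1-q)^{k_u}$ based on the decomposition $k_u=k_u^{(g)}+k_u^{(a)}+k_u^{(r)}$ from~\eqref{eq:kug}--\eqref{eq:kur}.

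For the lower bound \eqref{eq:EYlower}, I will apply inequality~\eqref{eq:k2} with $c=1$ to get $(1-q)^{k_u}\ge 1-q(k_u^{(g)}+k_u^{(a)})-\I{u\in \cD_{m-1}}$; summing over $u\in I_{m-1}$ and taking expectations yields
\[\E{Y_m}\ge (1+p)\E{Z_{m-1}}-(1+p)q\,\E{\sum_{u\in I_{m-1}}(k_u^{(g)}+k_u^{(a)})}-(1+p)\E{D_{m-1}}.\]
For $m\ge 4$, \refL{lem:kga} applied with summing index $m-1\ge 3$ bounds the middle expectation above by $(1+p)^3(\E{Z_{m-3}}+\E{Z_{m-4}})$, and \refL{lem:ERel} gives $\E{D_{m-1}}\le C_3 q^2 \E{Z_{m-4}}$; substituting produces \eqref{eq:EYlower} directly.

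For the upper bound \eqref{eq:EYupper}, I will use $(1-q)^{k_u^{(r)}}\le 1$ to reduce to $(1-q)^{k_u}\le (1-q)^{k_u^{(g)}+k_u^{(a)}}$ and then apply the second-order binomial bound $(1-q)^{m}\le 1-qm+\binom{m}{2}q^2$ (valid for $m\in \N_0$, $q\in [0,1]$; the difference vanishes at $q=0$ together with its derivative, and has nonpositive second derivative), giving $(1-q)^{k_u} \le 1-q(k_u^{(g)}+k_u^{(a)}) +\tfrac 12 (k_u^{(g)}+k_u^{(a)})^2 q^2$. For the linear coefficient, the trivial bound $\sum_u k_u^{(g)}\ge Z_{n-1}$ (valid for $n-1\ge 3$) combines with \refL{lem:ES21} applied with index $n-1$ (requiring $n\ge 6$) to give
\[\E{\sum_{u\in I_{n-1}}(k_u^{(g)}+k_u^{(a)})}\ge \E{Z_{n-1}}+\E{Z_{n-3}}-C_6 q\max_{3\le k\le 6}\E{Z_{n-k}}.\]
Substituting and discarding the nonpositive residual terms $-pq\E{Z_{n-1}}$ and $-pq\E{Z_{n-3}}$ that appear (since $(1+p)q\ge q$) produces the target linear part $(1+p-q)\E{Z_{n-1}}-q\E{Z_{n-3}}$ plus a quadratic error.

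The main obstacle is bounding the quadratic error $\E{\sum_{u\in I_{n-1}}(k_u^{(g)}+k_u^{(a)})^2}$ by a constant times $\max_{3\le k\le 6}\E{Z_{n-k}}$. Using $(a+b)^2\le 2a^2+2b^2$, it suffices to treat $\E{\sum_u (k_u^{(g)})^2}$ and $\E{\sum_u (k_u^{(a)})^2}$ separately; each counts ordered pairs $(v_1,v_2)$ of vertices at distance $3$ from $u$ of the respective type. The diagonal case $v_1=v_2$ reduces to $\E{\sum_u k_u^{(g)}}$ or $\E{\sum_u k_u^{(a)}}$ and is handled by \refL{lem:kga}. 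For the off-diagonal case, the existence of two distinct great-grandparents or aunts of a common $u$ forces a merger to have occurred in the ancestry of $u$ at some vertex of one of the generations $n-3,n-2,n-1$ (so that such a vertex has more than one parent); combining the merger bound $\E{M_m}\le \tfrac{q}{2}(1+p)^4\E{Z_{m-2}}$ from \refL{lem:EML'} with moment bounds on products of offspring counts from \refL{lem:EN} and \refL{lem:ENbar} then yields an $O(q\cdot \max_k\E{Z_{n-k}})$ estimate for this contribution. Choosing $C_7$ large enough to absorb $(1+p)C_6$ together with the constants arising from these moment bounds completes the proof.
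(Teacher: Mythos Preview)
Your lower bound \eqref{eq:EYlower} is identical to the paper's. Your upper bound strategy is also close to the paper's---both expand $(1-q)^{k_u}$ to second order and invoke \refL{lem:ES21} for the linear term---but your treatment of the quadratic error has a genuine gap.

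The problem is the sentence ``the existence of two distinct great-grandparents or aunts of a common $u$ forces a merger to have occurred in the ancestry of $u$.'' This is false for aunts: if $u\in I_{n-1}$ has a unique grandparent $w$ with three children, one being $u$'s parent and the other two being aunts, then $u$ has two distinct aunts and no merger whatsoever occurs in its ancestry. Consequently the off-diagonal part of $\sum_{u\in I_{n-1}}(k_u^{(a)})^2$ is \emph{not} $O(q)$ via a merger argument; it is genuinely of order $\E{Z_{n-3}}$. (Multiplied by the prefactor $q^2$ this is still acceptable, but your stated mechanism is wrong, and without the $J_{n-1}^{(2)}$ restriction the bound is not immediate: if $u$ has several grandparents, pairs of aunts need not share a common ancestor in $I_{n-3}$, so a single $N_{n-3}^{(\cdot)}$ does not capture the count.)

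The paper sidesteps this entirely. It first uses $k_u^{(g)}\ge 1$ to pull out a factor $(1-q)$, then splits $I_{n-1}$ into $J_{n-1}^{(2)}$ and its complement. On the complement it keeps only the trivial bound $(1-q)^{k_u^{(a)}+1}\le (1-q)$; on $J_{n-1}^{(2)}$ the unique grandparent $w$ forces every pair of distinct aunts of $u$ to lie in $\cN^{(2,1,1)}(w)$, whence $\sum_{u\in J_{n-1}^{(2)}}\binom{k_u^{(a)}}{2}\le \tfrac12 N_{n-3}^{(2,1,1)}$ and \refL{lem:EN} finishes the quadratic term with no merger input at all. Your argument is easily repaired by inserting this same $J_{n-1}^{(2)}$ split; without it, the quadratic control you sketch does not go through as written. (Minor aside: in your justification of $(1-q)^m\le 1-qm+\binom{m}{2}q^2$ the second derivative of the difference is nonnegative, not nonpositive.)
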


\begin{proof}
We begin with a proof of the lower bound~\eqref{eq:EYlower}.
Take $m\ge 4$. Recall from our construction in Section~\ref{subsec:UlamHarris} that for $u\in I_{m-1}$, we have
$\E{|\mathcal C_u| | \mathcal F_{m-1}}=(1+p)(1-q)^{k_u}$.
Since $Y_m = \sum_{u\in I_{m-1}}|\mathcal C_u|$, by conditioning on $\cF_{m-1}$, and then using \eqref{eq:k2} with $c=1$ and that $|I_{m-1}|=Z_{m-1}$ and $|\cD_{m-1}|=D_{m-1}$, we have
\begin{align*} 
   \E{Y_m}&= (1+p)\E{\sum_{u\in I_{m-1}} (1-q)^{k_u}}\\
    &\ge (1+p)\left(\E{Z_{m-1}}-q \E{\sum_{u\in I_{m-1}} (k_u^{(g)}+k_u^{(a)})} -\E{D_{m-1}}\right)\\
    &\ge (1+p)\E{Z_{m-1}}-q(1+p)^4 (\E{Z_{m-3}}+\E{Z_{m-4}})-C_3(1+p)q^2 \E{Z_{m-4}},
\end{align*}
where the last line follows by~\eqref{eq:kga} in \refL{lem:kga} and~\eqref{eq:ERel} in \refL{lem:ERel}.
This completes the proof of~\eqref{eq:EYlower}.

It remains to prove the upper bound~\eqref{eq:EYupper}. Take $n\ge 6$.
For $u\in I_{n-1}$, since $u$ has at least one great-grandparent we have $k_u^{(g)}\ge 1$, and so $k_u\ge k_u^{(a)}+1$. It follows that 
\begin{align} \label{eq:Ynupperstart}
   \E{Y_n}&\le (1+p)\E{\sum_{u\in I_{n-1}} (1-q)^{k_u^{(a)}+1}} \notag \\
   &\le (1+p)(1-q)\E{|I_{n-1}\setminus J^{(2)}_{n-1}|}+(1+p)\E{\sum_{u\in J^{(2)}_{n-1}} (1-q)^{k_u^{(a)}+1}}.
\end{align}
Now, by a simple induction argument, for $k\in \N\cup\{0\}$ we have  
$(1-q)^{k}\le 1-kq+{k \choose 2}q^2$. In particular, for $u\in J^{(2)}_{n-1}$,
\begin{align}\label{eq:kup}
    (1-q)^{k_u^{(a)}+1} 
    &\le (1-q)-q(1-q)k^{(a)}_u +q^2 (1-q) {k_u^{(a)}\choose 2};
\end{align}
for each of the different terms on the right-hand side of~\eqref{eq:kup}, we will bound the expectation of the sum over $u\in J^{(2)}_{n-1}$. 

First, for the third term on the right hand side of~\eqref{eq:kup}, 
recall from~\eqref{eq:kua} that for $u\in I_{n-1}$, 
$$ k^{(a)}_u=|\{v\in I_{n-2}\setminus \alpha_1(u): \alpha_2(u)\cap \alpha_1(v)\neq \emptyset \}|.$$
For $u\in J^{(2)}_{n-1}$, we have $\alpha_2(u)=\{w\}$ for some $w\in I_{n-3}$, and so if $v\neq v'\in I_{n-2}\setminus \alpha_1(u)$ with $\alpha_2(u)\cap \alpha_1(v)\neq \emptyset$ and $\alpha_2(u)\cap \alpha_1(v')\neq \emptyset$, we must have $w\in \alpha_1(v)\cap \alpha_1(v')$.
Therefore we can write
$$
\sum_{u\in J^{(2)}_{n-1}} {k_u^{(a)}\choose 2}
\le \tfrac 12 \sum_{w\in I_{n-3}}|\{(u,v,v')\in I^w_2 \times I^w_1 \times I^w_1: v\neq v', v,v'\notin \alpha_1(u)\}|
=\tfrac 12 N^{(2,1,1)}_{n-3}
$$
by the definition of $N^{(j_1,\ldots, j_k)}_n$ in~\eqref{eq:Nnumdefns}.
By \refL{lem:EN}, it follows that 
\begin{align*}
    q^2\E{\sum_{u\in J^{(2)}_{n-1}} {k_u^{(a)}\choose 2}}\le \frac{q^2}{2}\E{N_{n-3}^{(2,1,1)}}\le \frac{q^2}{2}(1+p)^4\E{Z_{n-3}}
    \le 8 q^2 \E{Z_{n-3}}.
\end{align*}
For the second term on the right hand side of~\eqref{eq:kup}, using \refL{lem:ES21}, we have
\begin{align*}
    q(1-q)\E{\sum_{u\in J^{(2)}_{n-1}} k_u^{(a)}}
    &\ge q(1-q)\E{Z_{n-3}}-C_6 q^2(1-q)\max_{3\le k\le 6}\E{Z_{n-k}}\\
    &\ge q\E{Z_{n-3}}-q^2(1+ C_6)\max_{3\le k\le 6}\E{Z_{n-k}}.
\end{align*}
Putting these estimates together with~\eqref{eq:Ynupperstart} and~\eqref{eq:kup} yields
\begin{align*}
   \E{Y_n}&\le (1+p)(1-q) \E{|I_{n-1}\setminus J_{n-1}^{(2)}|}\\
   &\quad +(1+p)\left((1-q)\E{|J^{(2)}_{n-1}|}-q(1-q)\E{\sum_{u\in J^{(2)}_{n-1}} k_u^{(a)}} + q^2\E{\sum_{u\in J^{(2)}_{n-1}} {k_u^{(a)}\choose 2}}
   \right)\\
   &\le (1+p)\left((1-q)\E{Z_{n-1}}-q\E{Z_{n-3}} + (C_6 +9) q^2\max_{3\le k\le 6} \E{Z_{n-k}}
   \right)\\
   &\le (1+p-q)\E{Z_{n-1}}-q\E{Z_{n-3}} + 2(C_6+9) q^2\max_{3\le k\le 6} \E{Z_{n-k}},
\end{align*}
which completes the proof.
\end{proof}

\begin{proof}[Proof of \refP{prop:Zn}]
Rearranging \eqref{eq:Z} from Lemma~\ref{lem:Z}, we have that for $n\ge 2$, 
\begin{equation} \label{eq:propZnineq}
   Y_n-M_n\le Z_n\le Y_n-\widetilde M_n+\widetilde L_n. 
\end{equation}
We have bounds on the expectations of $M_n$ and $\widetilde L_n$ in Lemma~\ref{lem:EML'},  $\widetilde M_n$ in \refL{lem:EM'} and $Y_n$ in Lemma~\ref{lem:EY} respectively; these bounds will now allow us to complete the proof of Proposition~\ref{prop:Zn}.

Take $0\le q \le p\le 1$.
For $n\ge 6$, by~\eqref{eq:propZnineq} and then by~\eqref{eq:EYupper} in Lemma~\ref{lem:EY},~\eqref{eq:lemEM'1} in \refL{lem:EM'} and~\eqref{eq:EL'} in Lemma~\ref{lem:EML'},
\begin{align*}
\E{Z_n}
&\le \E{Y_n}-\E{\widetilde M_n}+\E{\widetilde L_n}\\     
&\le (1+p-q)\E{Z_{n-1}}-q\E{Z_{n-3}} + C_7 q^2\max_{3\le k\le 6} \E{Z_{n-k}} \\
& \qquad -\frac{q}{2}\E{Z_{n-2}} +C_5 q^2\max_{3\le k\le 5} \E{Z_{n-k}}+4q^2(1+p)^6 \E{Z_{n-2}}\\
&\le (1+p-q)\E{Z_{n-1}}-\frac{q}{2}\E{Z_{n-2}}-q\E{Z_{n-3}} + C' q^2 \max_{2\le k\le 6}\E{Z_{n-k}},
\end{align*}
where $C'=C_7+C_5+2^8$. 
By~\eqref{eq:Z4} in \refP{prop:Z4}, we have
$$
\E{Z_{n-1}}\le (1+p)\E{Z_{n-2}}
\quad \text{and}\quad
\E{Z_{n-1}}\le (1+p)^{2}\E{Z_{n-3}}.
$$
Therefore
\begin{align*}
\E{Z_n}
&\le (1+p-\tfrac 52 q)\E{Z_{n-1}}+\frac{q}{2}((1+p)-1)\E{Z_{n-2}}\\
&\qquad +q((1+p)^{2}-1)\E{Z_{n-3}} + C' q^2 \max_{2\le k\le 6}\E{Z_{n-k}},
\end{align*}
which completes the proof of~\eqref{eq:upperZ}, since we are assuming $q\le p$. 

Now, for $n\ge 4$, by~\eqref{eq:propZnineq}, and then by~\eqref{eq:EYlower} in Lemma~\ref{lem:EY} and~\eqref{eq:EM} in Lemma~\ref{lem:EML'},
\begin{align*}
\E{Z_n}
&\ge \E{Y_n}-\E{M_n}\\     
&\ge (1+p)\E{Z_{n-1}}-q(1+p)^4\left( \E{Z_{n-3}}+\E{Z_{n-4}}\right)
    -C_3(1+p)q^2\E{Z_{n-4}}\\
& \qquad -\frac{q}{2}(1+p)^4\E{Z_{n-2}}\\
&\ge (1+p)\E{Z_{n-1}} -\tfrac{5}{2}q(1+p)^4\max_{2\le k\le 4}\E{Z_{n-k}} - 2C_3 q^2\E{Z_{n-4}},
\end{align*}
which completes the proof of~\eqref{eq:lowerZ}.

Finally, note that since for $n\ge 4$, the construction of $G'_n$ from $G'_{n-1}$ is the same as the construction of $G_n$ from $G_{n-1}$, we have that when the graph process $\cG(p,q)$ is replaced with $\cG'(p,q)$, Lemma~\ref{lem:Z} holds for $n\ge 4$,~\eqref{eq:EM} in Lemma~\ref{lem:EML'} holds for $n\ge 5$, and~\eqref{eq:EYlower} in Lemma~\ref{lem:EY} holds for $n\ge 7$
(because~\eqref{eq:kga} in Lemma~\ref{lem:kga} holds for $n\ge 6$, and~\eqref{eq:ERel} in Lemma~\ref{lem:ERel} holds for $n\ge 6$).
Therefore~\eqref{eq:lowerZ} holds for $m\ge 7$ when $Z_l$ is replaced with $Z'_l$ for each $l$, which completes the proof.
\end{proof}
It remains to prove Proposition~\ref{prop:ERn}.
For $n\ge 1$, let $K_n$ denote the number of deletions that occur when $\tilde I_n$ is constructed, i.e.~let
\begin{equation} \label{eq:Kndefn}
K_n := |\{ui:u\in I_{n-1}, i\le \xi_u, i \notin \cC_u\}|
=\sum_{u\in I_{n-1}}(\xi_u - |\mathcal C_u|)=\sum_{u\in I_{n-1}}\xi_u -Y_n.
\end{equation}
Recall from~\eqref{eq:Rkdefn} that $R_n:=|\cR_n|$, where $\cR_n:=\{v\in I_n:|\{w\in V_n :1\le d_{G_n}(v,w)\le 3\}|=3\}.$
We will use the following lower bound on $R_n$ to prove Proposition~\ref{prop:ERn}.
\begin{lem}\label{lem:RnCoupling}
For $n\ge 3$,  
\begin{align}\label{eq:R}
    R_n\ge T_n - \sum_{k=0}^2 (K_{n-k}+2\I{n-k\ge 2}M_{n-k}),
\end{align}
where 
$T_n:=|\{u\in I_{n-3}: \xi_u = \xi_{u1}=\xi_{u11}=1\}|$.
\end{lem}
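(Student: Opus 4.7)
The plan is to build an injection from a suitable subset of $T_n$ into $\cR_n$, where the complement of this subset is controlled by the deletion and merger statistics in generations $n-2$, $n-1$, and $n$.

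Call a vertex $u\in T_n$ \emph{good} if the natural chain from $u$ survives the construction intact, meaning that for $k=0,1,2$, the index $1$ lies in $\cC_{u1^k}$ (no deletion) and the vertex $u1^{k+1}\in \tilde I_{n-2+k}$ is not in any merger pair (so $\sigma(u1^{k+1})=\{u1^{k+1}\}$). For good $u$, I will argue that $u111$ is a renewed vertex. Indeed, since $\xi_u=\xi_{u1}=\xi_{u11}=1$ and none of $u1,u11,u111$ is deleted or merged, the Ulam--Harris construction in Section~\ref{subsec:UlamHarris} gives $u1\in I_{n-2}$, $u11\in I_{n-1}$, $u111\in I_n$, each having exactly one parent equal to the preceding vertex in the chain. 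Hence $\alpha_i(u111)=\{u1^{3-i}\}$ for $i=1,2,3$. Because $u111$'s unique parent is $u11$, every path in $G_n$ from $u111$ goes through $u11$; the condition $\xi_{u1^k}=1$ together with the absence of any merger of $u1^{k+1}$ rules out additional siblings, aunts or great-aunts arising via extra offspring or via cousin mergers in later generations. Therefore $\{w\in V_n:1\le d_{G_n}(u111,w)\le 3\}=\{u,u1,u11\}$, giving $u111\in\cR_n$. Since the map $u\mapsto u111$ is injective, $R_n\ge |\{u\in T_n: u\text{ is good}\}|$, hence
\begin{equation*}
T_n-R_n\le |\{u\in T_n:u\text{ is bad}\}|.
\end{equation*}

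To bound the right-hand side, I will decompose the event ``$u$ is bad'' into six disjoint-type failure modes and control each by the relevant deletion/merger count. For $k\in\{0,1,2\}$, $u$ is bad either because (D$_k$) the vertex $u1^{k+1}\in\tilde I_{n-2+k}$ is deleted, or (M$_k$) because $u1^{k+1}$ lies in at least one merger pair in $\tilde I_{n-2+k}$. For the deletion contribution (D$_k$): each such $u$ corresponds to a distinct deletion event $(u1^k,1)$ in the construction of $\tilde I_{n-2+k}$ with parent $u1^k\in I_{n-3+k}$ (the parent exists by the definition of being bad only at step $k$, or else an earlier failure has occurred, in which case the same $u$ is already counted at an earlier index), and the parent label $u1^k$ uniquely determines $u$. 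Hence by a union bound, $(\text{D}_k)$ contributes at most $K_{n-2+k}$ bad $u$. For the merger contribution (M$_k$): each merger pair $\{a,b\}\subset\tilde I_{n-2+k}$ can produce at most two bad indices (one with $u1^{k+1}=a$, one with $u1^{k+1}=b$), since the label $u1^{k+1}$ determines $u$; a vertex that lies in several merger pairs is blamed on any one of them, and a standard double-counting gives that the number of vertices of $\tilde I_{n-2+k}$ appearing in at least one merger pair is at most $2M_{n-2+k}$. Finally, $M_m$ is only well defined for $m\ge 2$, which introduces the indicator $\I{n-k\ge 2}$.

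Combining the six bounds via a union bound yields
\begin{equation*}
T_n-R_n\le \sum_{k=0}^{2}\bigl(K_{n-k}+2\,\I{n-k\ge 2}\,M_{n-k}\bigr),
\end{equation*}
which is \eqref{eq:R}. The main subtlety, which I would spell out carefully in the write-up, is verifying that for good $u$ no merger in generations $n-2,n-1,n$ can clandestinely introduce an extra ancestor or a distance-$\le 3$ relative to $u111$ via cousin identifications involving other vertices; this uses precisely the conjunction of the three conditions $\xi_u=\xi_{u1}=\xi_{u11}=1$ with the no-merger hypothesis along the entire chain, together with the observation that the unique-parent structure propagates downward once established at a given generation.
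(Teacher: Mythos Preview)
Your proof is correct and follows essentially the same approach as the paper's: both show that if $u\in I_{n-3}$ satisfies $\xi_u=\xi_{u1}=\xi_{u11}=1$, $u1\in\cC_u$, $u11\in\cC_{u1}$, $u111\in\cC_{u11}$, and $|\sigma(u1)|=|\sigma(u11)|=|\sigma(u111)|=1$, then $u111\in\cR_n$, and then bound the number of $u\in T_n$ failing one of these six conditions by the corresponding deletion and merger counts. The paper presents this as a chain of three inequalities (peeling off the conditions for $u111$, then $u11$, then $u1$), whereas you phrase it as a union bound over the first index $k$ at which the chain fails; these are equivalent formulations of the same argument.
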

\begin{proof}
Suppose $u\in I_{n-3}$ with $\xi_u = \xi_{u1}=\xi_{u11}=1$.
Suppose also that $u1\in \cC_u$, $u11\in \cC_{u1}$, $u111\in \cC_{u11}$, and for each $v\in \{u1,u11,u111\}$ we have $|\sigma(v)|=1$, i.e.
$\{v'\in I_{|v|}:v'\stackrel{m}{\sim}v\}=\emptyset$.
Then by our construction in Section~\ref{subsec:UlamHarris}, we have $u111 \in \mathcal R_n.$
Hence by the definitions of $M_n$ in~\eqref{eq:Mndefn} and $K_n$ in~\eqref{eq:Kndefn},
\begin{align*}
    R_n &\ge |\{u\in I_{n-3}: \xi_u = \xi_{u1}=\xi_{u11}=1,
\, u1\in \cC_u,\, u11\in \cC_{u1}, \, |\sigma(u1)|=1, \, |\sigma(u11)|=1 \}|\\
&\qquad \qquad -2M_n-K_n\\
&\ge |\{u\in I_{n-3}: \xi_u = \xi_{u1}=\xi_{u11}=1,
\, u1\in \cC_u, |\sigma(u1)|=1  \}|\\
&\qquad \qquad -2M_n-K_n-2M_{n-1}-K_{n-1}\\
&\ge |\{u\in I_{n-3}: \xi_u = \xi_{u1}=\xi_{u11}=1\}|- \sum_{k=0}^2 (K_{n-k}+2\I{n-k\ge 2} M_{n-k}).
\end{align*}
By the definition of $T_n$, this completes the proof.
\end{proof}

\begin{proof}[Proof of Proposition~\ref{prop:ERn}]
Take $n\ge 6$ and $0\le q\le p \le 1$. 
Recall from the statement of \refL{lem:RnCoupling} that we let
$T_n:=|\{u\in I_{n-3}: \xi_u = \xi_{u1}=\xi_{u11}=1\}|$.
Since for $u\in I_{n-3}$, conditional on $\mathcal F_{n-3}$, $\xi_u, \xi_{u1},\xi_{u11}$ are i.i.d.~with Poisson distribution with mean $1+p$,
\begin{align}\label{eq:ETn}
    \E{T_n} = ((1+p)e^{-(1+p)})^3\E{Z_{n-3}} \ge e^{-6}\E{Z_{n-3}},
\end{align}
where the inequality follows since $0\le p \le 1$.

We will use \refL{lem:RnCoupling} to obtain a lower bound on $\E{R_n}$; this will require an upper bound on $\E{K_{n-k}}+2\E{M_{n-k}}$ for $0\le k \le 2$. 
Take $m\ge 4$; by~\eqref{eq:Kndefn} we have
\begin{align*}
    \E{K_m}&=\E{\sum_{u\in I_{m-1}}\xi_u }-\E{Y_m}\\
    &\le (1+p)\E{Z_{m-1}} \\
    &\qquad - ((1+p)\E{Z_{m-1}}-q(1+p)^4\left( \E{Z_{m-3}}+\E{Z_{m-4}}\right)
    -C_3(1+p)q^2\E{Z_{m-4}})
\end{align*}
by~\eqref{eq:EYlower} in Lemma~\ref{lem:EY} and since $\E{\xi_u | \cF_{m-1}}=1+p$ for $u\in I_{m-1}$.
By \eqref{eq:EM} in Lemma~\ref{lem:EML'}, it follows that
\begin{align*}
    \E{K_m}+2\E{M_m} 
    &\le q(1+p)^4 (\E{Z_{m-3}}+\E{Z_{m-4}}+qC_3 \E{Z_{m-4}}+\E{Z_{m-2}})\\
    &\le 16(3+C_3)p\max_{2\le k\le 4} \E{Z_{m-k}}
\end{align*}
since $q\le p \le 1$.
Therefore, since $\E{Z_{n-2}}\leq (1+p) \E{Z_{n-3}}\le 2\E{Z_{n-3}}$ by~\eqref{eq:Z4} in \refP{prop:Z4}, we have 
\begin{align}\label{eq:KMm}
    \sum_{k=0}^2 (\E{K_{n-k}}+2\E{M_{n-k}}) &\le 48(3+C_3)p \max_{2\le k\le 6}\E{Z_{n-k}}\le 96(3+C_3)p \max_{3\le k\le 6}\E{Z_{n-k}}. 
\end{align}
By \eqref{eq:R} in \refL{lem:RnCoupling} together with~\eqref{eq:ETn} and~\eqref{eq:KMm},
it follows that
$$
\E{R_n}\ge e^{-6}\E{Z_{n-3}}-96(3+C_3)p \max_{3\le k\le 6}\E{Z_{n-k}},
$$
and by setting $2\eta=e^{-6}$ and $p_2= (192 e^6(3+C_3))^{-1}$, this establishes~\eqref{eq:propERn}.

Finally, 
since for $n\ge 4$, the construction of $G'_n$ from $G'_{n-1}$ is the same as the construction of $G_n$ from $G_{n-1}$, we have that when the graph process $\cG(p,q)$ is replaced with $\cG'(p,q)$,
Lemma~\ref{lem:RnCoupling} holds for $n\ge 6$,~\eqref{eq:EYlower} in Lemma~\ref{lem:EY} holds for $n\ge 7$,~\eqref{eq:EM} in Lemma~\ref{lem:EML'} holds for $n\ge 5$, and~\eqref{eq:Z4} in Proposition~\ref{prop:Z4} holds.
It follows that for $n\ge 9$, when $R_n$, $Z_{n-3}$ and $Z_{n-k}$ are replaced with $R'_n$, $Z'_{n-3}$ and $Z'_{n-k}$,~\eqref{eq:propERn} holds, which completes the proof.
\end{proof}

\section*{Acknowledgements}
We thank Joel Spencer for suggesting cousin merging in relation to a branching process to recover the coefficients of the critical probability for hypercube percolation. Part of this research was undertaken at the 2018 and 2019 Bellairs Workshops on Probability, held at the Bellairs Research Institute of McGill University. LE was partially supported by PAPIIT TA100820. FS was partially supported by the project AI4Research at Uppsala University and by the Wallenberg AI, Autonomous Systems and Software Program (WASP) funded by the Knut and Alice
Wallenberg Foundation.

\newpage

\appendix
\section{Appendix}\label{sec:Appendix}

\begin{lem}\label{lem:ML}
Let $H=(V,E)$ be a simple, non-empty graph with $t$ vertices and $m$ edges. Let $\ell$ be the number of directed paths of length two in $H$. Then $m-\ell<t$.
\end{lem}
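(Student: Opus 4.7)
My plan is to reduce the bound $m-\ell<t$ to a simple per-vertex estimate based on the degree sequence of $H$.

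The first step is to rewrite both $m$ and $\ell$ in terms of the vertex degrees $(d_v)_{v\in V}$. The handshake identity gives $m=\tfrac{1}{2}\sum_{v\in V}d_v$, and since every directed path of length two has a unique middle vertex, $\ell=\sum_{v\in V}d_v(d_v-1)$. Subtracting yields
\begin{align*}
    m-\ell \;=\; \sum_{v\in V}\left(\tfrac{d_v}{2}-d_v(d_v-1)\right)
    \;=\; \tfrac{1}{2}\sum_{v\in V} d_v(3-2d_v).
\end{align*}

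Next, I would bound the summand one vertex at a time. The function $d\mapsto d(3-2d)/2$ equals $0$ at $d=0$, equals $1/2$ at $d=1$, and is at most $-1$ for every integer $d\ge 2$ (its maximum on $\{2,3,\ldots\}$ being attained at $d=2$). Writing $V_0$, $V_1$, $V_{\ge 2}$ for the sets of vertices of degree $0$, $1$, and at least $2$ respectively, this immediately gives
\begin{align*}
    m-\ell \;\le\; \tfrac{1}{2}|V_1| - |V_{\ge 2}|.
\end{align*}

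Finally, using $t=|V_0|+|V_1|+|V_{\ge 2}|$ and rearranging,
\begin{align*}
    t-(m-\ell) \;\ge\; |V_0|+\tfrac{1}{2}|V_1|+2|V_{\ge 2}|,
\end{align*}
and the right-hand side is strictly positive because, by non-emptiness of $H$, at least one of the three sets $V_0, V_1, V_{\ge 2}$ is non-empty. Hence $m-\ell<t$, as required.

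The whole argument is a short vertex-counting calculation and I do not anticipate any real obstacle: the only insight needed is to express $m-\ell$ in the form $\tfrac{1}{2}\sum_v d_v(3-2d_v)$, after which each degree class contributes in a way which is immediately controllable, and the strictness of the bound comes for free from $t\ge 1$.
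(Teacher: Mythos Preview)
Your proof is correct and follows essentially the same degree-counting approach as the paper: both express $m$ and $\ell$ via the degree sequence and then bound vertex-by-vertex. The paper's arithmetic is slightly slicker---it observes $t+\ell=\sum_v(1+d_v(d_v-1))\ge\sum_v d_v=2m>m$ using $(d_v-1)^2\ge 0$ and handling $m=0$ separately---but your case split on $d_v\in\{0,1,\ge 2\}$ reaches the same conclusion just as validly.
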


\begin{proof}
If $m=0$ then we have $m-\ell \le 0 <t$. Now assume that $m\ge 1$.
For every vertex $v\in V$, denote the degree of $v$ by $d_v$. It is straightforward to see that $2m=\sum_{v\in V} d_v$ and $\ell=\sum_{v\in V} d_v(d_v-1)$. Recall that $|V|=t$ and $m\ge 1$; the result follows since 
\begin{align*}
    t+\ell =\sum_{v\in V} (1+d_v(d_v-1))\ge \sum_{v\in V} d_v=2m >m.
\end{align*}

\vspace{-1cm}
\end{proof}

\subsection{Natural couplings}

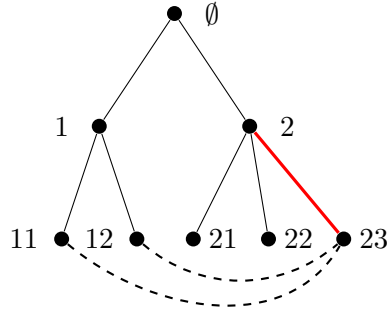
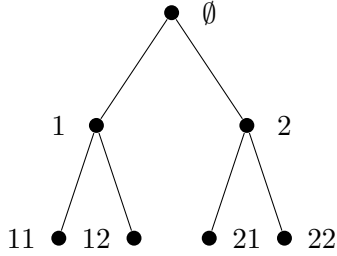
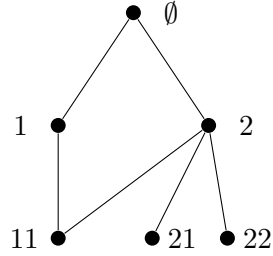
\begin{figure} 
\begin{center}
\begin{minipage}[b]{.9\textwidth}
\begin{center}
\begin{tikzpicture}[scale=1]
    		\tikzstyle{vertex}=[circle,fill=black, draw=none, minimum size=4pt,inner sep=2pt]
            \node[vertex] (0) at (0,3){};
    		\node[vertex] (1) at (-1,1.5){};
    		\node[vertex] (2) at (1,1.5){};
    		\node[vertex] (11) at (-1.5,0){};
    		\node[vertex] (12) at (-.5,0){};
    		\node[vertex] (21) at (0.25,0){};
    		\node[vertex] (22) at (1.25,0){};
    		\node[vertex] (23) at (2.25,0){};
        		\draw (1)--(0)--(2);
        		\draw (11)--(1)--(12);
        		\draw (21)--(2)--(22);
        		\draw[very thick, red] (2)--(23);
                \draw[dashed, thick] (11) to[out=-40,in=240] (23);
                \draw[dashed, thick] (12) to[out=-40,in=220] (23);
                \node[] at (.5,3){$\emptyset$};
                \node[] at (-1.5,1.5){$1$};
                \node[] at (1.5,1.5){$2$};
                \node[] at (-2,0){$11$};
                \node[] at (-1,0){$12$};
                \node[] at (.65,0){$21$};
                \node[] at (1.65,0){$22$};
                \node[] at (2.65,0){$23$};
    \end{tikzpicture}
\end{center}
\vspace{-0.5cm}
    \subcaption{Ulam-Harris tree. Each vertex $u$ in generations 0 and 1 has $\xi '_u$ offspring in the next generation linked with black edges, and $\xi_u-\xi '_u$ offspring linked with red edges. Dotted lines between pairs of vertices $u$ and $v$ in generation 2 indicate that $\mu_{\{u,v\}}=1$.}
\end{minipage}
\end{center}

\begin{minipage}[b]{.5\textwidth}
\begin{center}
\begin{tikzpicture}[scale=1]
    		\tikzstyle{vertex}=[circle,fill=black, draw=none, minimum size=4pt,inner sep=2pt]
            \node[vertex] (0) at (0,3){};
    		\node[vertex] (1) at (-1,1.5){};
    		\node[vertex] (2) at (1,1.5){};
    		\node[vertex] (11) at (-1.5,0){};
    		\node[vertex] (12) at (-.5,0){};
    		\node[vertex] (21) at (0.5,0){};
    		\node[vertex] (22) at (1.5,0){};
        		\draw (1)--(0)--(2);
        		\draw (11)--(1)--(12);
        		\draw (21)--(2)--(22);
                \node[] at (.5,3){$\emptyset$};
                \node[] at (-1.5,1.5){$1$};
                \node[] at (1.5,1.5){$2$};
                \node[] at (-2,0){$11$};
                \node[] at (-1,0){$12$};
                \node[] at (1,0){$21$};
                \node[] at (2,0){$22$};
    \end{tikzpicture}
\end{center}
    \subcaption{Realisation of $G_2(p',q)$}
\end{minipage}
\begin{minipage}[b]{.5\textwidth}
\begin{center}
\begin{tikzpicture}[scale=1]
    		\tikzstyle{vertex}=[circle,fill=black, draw=none, minimum size=4pt,inner sep=2pt]
            \node[vertex] (0) at (0,3){};
    		\node[vertex] (1) at (-1,1.5){};
    		\node[vertex] (2) at (1,1.5){};
    		\node[vertex] (11) at (-1,0){};
    		\node[vertex] (21) at (0.25,0){};
    		\node[vertex] (22) at (1.25,0){};
        		\draw (1)--(0)--(2)--(22);
        		\draw (1)--(11)--(2)--(21);
                \node[] at (.5,3){$\emptyset$};
                \node[] at (-1.5,1.5){$1$};
                \node[] at (1.5,1.5){$2$};
                \node[] at (-1.45,0){$11$};
                \node[] at (.65,0){$21$};
                \node[] at (1.65,0){$22$};
    \end{tikzpicture}
\end{center}
    \subcaption{Realisation of $G_2(p,q)$}
\end{minipage}
    \caption{The number of individuals in the second generation is smaller in the graph process $\mathcal G(p,q)$ than in the graph process $\mathcal G(p',q)$, while $p>p'>-1$.}
    \label{fig:pnonmonotone}
\end{figure}

As mentioned in the introduction, we now briefly discuss natural couplings of the graph processes with different values of $p$ and $q$, using our Ulam-Harris construction in Section~\ref{sec:UlamHarris}.
Take $p>p'>-1$ and $q'>q>0$.
Let $(\xi_u)_{u\in \mathcal U}$ and $(\xi'_u)_{u\in \mathcal U}$ be families of i.i.d.~random variables where $\xi_u$ has Poisson distribution with mean $1+p$ for each $u$, and $\xi'_u$ has Poisson distribution with mean $1+p'$ for each $u$, coupled in such a way that $\xi_u\ge \xi'_u$ $\forall u\in \mathcal U$.
Let $(\delta_{u,v})_{u,v\in \mathcal U}$, $(\mu_{\{u,v\}})_{u \neq v\in \mathcal U}$ be independent families of i.i.d.~random variables with Bernoulli distribution with mean $q$,
    and let $(\delta'_{u,v})_{u,v\in \mathcal U}$, $(\mu'_{\{u,v\}})_{u\neq v\in \mathcal U}$ be independent families of i.i.d.~random variables with Bernoulli distribution with mean $q'$, 
    coupled in such a way that $\delta_{u,v}\le \delta'_{u,v}$ and $\mu_{\{u,v\}}\le \mu'_{\{u,v\}}$ $\forall u,v\in \mathcal U$.
    
    Construct the graph process $\mathcal G(p,q)$ according to the Ulam-Harris construction in Section~\ref{sec:UlamHarris}, using the random variables $(\xi_u)_{u\in \mathcal U}$, $(\delta_{u,v})_{u,v\in \mathcal U}$ and $(\mu_{\{u,v\}})_{u \neq v\in \mathcal U}$. Similarly, construct the graph process $\mathcal G(p',q)$ using the random variables $(\xi'_u)_{u\in \mathcal U}$, $(\delta_{u,v})_{u,v\in \mathcal U}$ and $(\mu_{\{u,v\}})_{u \neq v\in \mathcal U}$,
    and construct the graph process $\mathcal G(p,q')$ using the random variables $(\xi_u)_{u\in \mathcal U}$, $(\delta '_{u,v})_{u,v\in \mathcal U}$ and $(\mu '_{\{u,v\}})_{u \neq v\in \mathcal U}$.
    We illustrate possible realisations of these graph processes in Figures~\ref{fig:pnonmonotone} and~\ref{fig:qnonmonotone}, showing that these couplings do not trivially imply monotonicity of the survival probability in $p$ or $q$.

\begin{figure} 
\begin{center}
\begin{minipage}[b]{.9\textwidth}
\begin{center}
\begin{tikzpicture}[scale=1]
    		\tikzstyle{vertex}=[circle,fill=black, draw=none, minimum size=4pt,inner sep=2pt]
            \node[vertex] (0) at (0,4.5){};
    		\node[vertex] (1) at (-1,3){};
    		\node[vertex] (2) at (1,3){};
    		\node[vertex] (11) at (-1,1.5){};
    		\node[vertex] (21) at (.25,1.5){};
    		\node[vertex] (22) at (1.75,1.5){};
    		\node[vertex] (211) at (-.25,0){};
    		\node[vertex] (221) at (.75,0){};
    		\node[vertex] (222) at (1.75,0){};
    		\node[vertex] (223) at (2.75,0){};
        		\draw (11)--(1)--(0)--(2);
        		\draw (211)--(21)--(2)--(22)--(223);
        		\draw (221)--(22)--(222);
                \draw[dashed, ultra thick, red] (11) to[out=-40,in=220] (21);        \draw[dashed, thick] (211) to[out=-40,in=210] (221);
                \draw[dashed, thick] (211) to[out=-50,in=230] (222);
                \node[] at (.5,4.5){$\emptyset$};
                \node[] at (-1.5,3){$1$};
                \node[] at (1.5,3){$2$};
                \node[] at (-1.5,1.5){$11$};
                \node[] at (.65,1.5){$21$};
                \node[] at (2.15,1.5){$22$};
                \node[] at (-.75,0){$211$};
                \node[] at (1.25,0){$221$};
                \node[] at (2.25,0){$222$};
                \node[] at (3.25,0){$223$};
    \end{tikzpicture}
\end{center}
\vspace{-0.3cm}
    \subcaption{Ulam-Harris tree. Each vertex $u$ in generations 0, 1 and 2 has $\xi_u$ offspring in the next generation. Black dotted lines between pairs of vertices $u$ and $v$ indicate that $\mu_{\{u,v\}}=1$; red dotted lines indicate that $\mu'_{\{u,v\}}=1$ and $\mu_{\{u,v\}}=0$.}
\end{minipage}
\end{center}

\begin{minipage}[b]{.5\textwidth}
\begin{center}
\begin{tikzpicture}[scale=1]
    		\tikzstyle{vertex}=[circle,fill=black, draw=none, minimum size=4pt,inner sep=2pt]
            \node[vertex] (0) at (0,4.5){};
    		\node[vertex] (1) at (-1,3){};
    		\node[vertex] (2) at (1,3){};
    		\node[vertex] (11) at (0,1.5){};
    		\node[vertex] (22) at (1.75,1.5){};
    		\node[vertex] (221) at (.75,0){};
    		\node[vertex] (222) at (1.75,0){};
    		\node[vertex] (223) at (2.75,0){};
        		\draw (11)--(1)--(0)--(2);
        		\draw (11)--(2)--(22)--(223);
        		\draw (221)--(22)--(222);
                \node[] at (.5,4.5){$\emptyset$};
                \node[] at (-1.5,3){$1$};
                \node[] at (1.5,3){$2$};
                \node[] at (-.5,1.5){$11$};
                \node[] at (2.15,1.5){$22$};
                \node[] at (1.25,0){$221$};
                \node[] at (2.25,0){$222$};
                \node[] at (3.25,0){$223$};
\end{tikzpicture}
\end{center}
    \subcaption{Realisation of $G_3(p,q')$}
\end{minipage}
\begin{minipage}[b]{.5\textwidth}
\begin{center}
\begin{tikzpicture}[scale=1]
    		\tikzstyle{vertex}=[circle,fill=black, draw=none, minimum size=4pt,inner sep=2pt]
            \node[vertex] (0) at (0,4.5){};
    		\node[vertex] (1) at (-1,3){};
    		\node[vertex] (2) at (1,3){};
    		\node[vertex] (11) at (-1,1.5){};
    		\node[vertex] (21) at (.25,1.5){};
    		\node[vertex] (22) at (1.75,1.5){};
    		\node[vertex] (211) at (1,0){};
    		\node[vertex] (223) at (2.75,0){};
        		\draw (11)--(1)--(0)--(2);
        		\draw (211)--(21)--(2)--(22)--(223);
        		\draw (211)--(22);
                \node[] at (.5,4.5){$\emptyset$};
                \node[] at (-1.5,3){$1$};
                \node[] at (1.5,3){$2$};
                \node[] at (-1.5,1.5){$11$};
                \node[] at (.65,1.5){$21$};
                \node[] at (2.15,1.5){$22$};
                \node[] at (1.5,0){$211$};
                \node[] at (3.25,0){$223$};
    \end{tikzpicture}
\end{center}
    \subcaption{Realisation of $G_3(p,q)$}
\end{minipage}
    \caption{The number of individuals in the third generation is smaller in the graph process $\mathcal G(p,q)$ than in the graph process $\mathcal G(p,q')$, while $q'>q>0$.}
    \label{fig:qnonmonotone}
\end{figure}
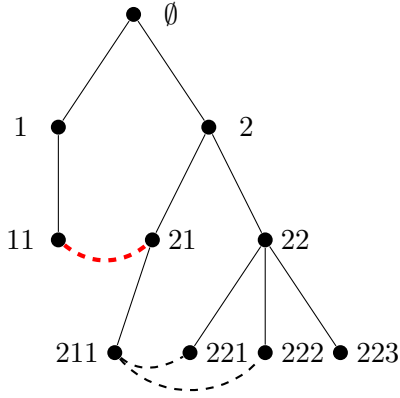
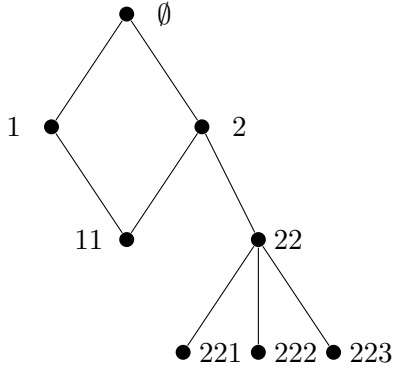
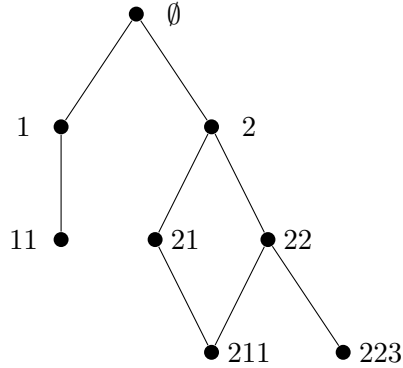

\end{document}